\numberwithin{equation}{section}
\title{On the density of zeros of linear combinations of Euler products for $\sigma>1$}
\author{Mattia Righetti}
\date{}
\address{Dipartimento di Matematica, Universit\`{a} di Genova, via Dodecaneso 35, 16146, Genoa, Italy}
\email{righetti@dima.unige.it}           
\begin{document}

\begin{abstract}
It has been conjectured that the real parts of the zeros of a linear combination of two or more $L$-functions are dense in the interval $[1,\sigma^*]$, where $\sigma^*$ is the least upper bound of the real parts of such zeros. In this paper we show that this is not true in general. Moreover, we describe the optimal configuration of the zeros of linear combinations of orthogonal Euler products by showing that the real parts of such zeros are dense in subintervals of $[1,\sigma^*]$ whenever $\sigma^*>1$.  
\end{abstract}

\maketitle

\section{Introduction}

Let $L(s)$ be a Dirichlet series and let $\sigma^*=\sigma^*(L)$ be the least upper bound of the real parts of the zeros of $L(s)$. Then it is well known that $\sigma^*$ is finite (see e.g. Titchmarsh \cite[\S9.41]{titchmarsh2}). For the Riemann zeta function $\zeta(s)$ we know that $\sigma^*\leq 1$, and it is expected that the Riemann Hypothesis holds, i.e. $\sigma^*= \frac{1}{2}$. A similar situation is expected for many Euler products (see e.g. Selberg \cite{selberg}).

On the other hand, we have recently proved \cite{righetti}, for a large class of $L$-functions with a polynomial Euler product, that non-trivial linear combinations $L(s)$ have zeros for $\sigma>1$. This is not surprising since many examples of such linear combinations were already known to have zeros for $\sigma>1$ from work of Davenport and Heilbronn \cite{davenport1,davenport2} on the Hurwitz and Epstein zeta functions. We also refer to later works of Cassels \cite{cassels}, Conrey and Ghosh \cite{conreyghosh}, Saias and Weingartner \cite{saias}, and Booker and Thorne \cite{booker}.

In certain cases something more is known, namely Bombieri and Mueller \cite{bombierimueller} have shown that the real parts of the zeros of certain Epstein zeta functions are dense in the interval $[1,\sigma^*]$. Note that these functions may be written as a linear combination of two Hecke $L$-functions. Other examples of linear combinations with this property may be found in Bombieri and Ghosh \cite{bombierighosh}, although not explicitly stated.

One might expect this to hold in general for the real parts of the zeros of linear combinations of two or more $L$-functions (see Bombieri and Ghosh \cite[p. 230]{bombierighosh}). However this is too much to hope for as one can see from the following.

\begin{theorem}\label{theorem:one_hole} Let $N\geq 2$ be an integer and, for $j=1,\ldots,N$, let $F_j(s)=\sum_{n=1}^\infty a_j(n)n^{-s}$ be a not-identically-zero Dirichlet series absolutely convergent for $\sigma>1$. If $\sum_{j=1}^N |a_j(1)|\neq 0$, then there exist infinitely many $\bb{c}=(c_1,\ldots,c_N)\in\C^N$ such that the Dirichlet series $L_{\bb{c}}(s)=\sum_{j=1}^N c_jF_j(s)$ has no zeros in some vertical strip $\sigma_1<\sigma<\sigma_2$ with $1<\sigma_1<\sigma_2<\sigma^*(L_{\bb{c}})$.
\end{theorem}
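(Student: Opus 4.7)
I would construct $\bb{c}$ so that $L_{\bb{c}}(s)$ is a small perturbation of a three-term Dirichlet polynomial whose zero set, analysed via Kronecker's equidistribution theorem, has a gap in its real parts; a Rouch\'e-type estimate will then transfer the gap to $L_{\bb{c}}$. Since $L_{\lambda\bb{c}}=\lambda L_{\bb{c}}$ has the same zeros as $L_{\bb{c}}$, producing one valid $\bb{c}$ automatically gives infinitely many, and a continuous family arises from the construction.

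\textbf{Construction of $\bb{c}$.} After relabelling, assume $a_1(1)\neq 0$. Pick two integers $n_0<n_1$, both $\geq 2$, with $\log n_0,\log n_1$ rationally independent (for instance, distinct primes on which some $F_j$ has a non-vanishing coefficient; such a choice exists because each $F_j$ is not identically zero). Using the $N\geq 2$ degrees of freedom, select $\bb{c}$ so that $b(1)=\delta$ and $b(n_0)=B$ with $\delta\neq 0$ small and $B$ of moderate modulus; the value $C:=b(n_1)$ is either prescribed independently (if $N\geq 3$) or determined as a nontrivial linear function of $\delta$ and $B$ (if $N=2$), and for generic $F_j$ it is also of moderate modulus. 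Decompose $L_{\bb{c}}(s)=P(s)+R(s)$ with $P(s)=\delta+Bn_0^{-s}+Cn_1^{-s}$.

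\textbf{Gap analysis.} By Kronecker's theorem, on the vertical line $\sigma+i\R$ the image of $P$ is dense in the annulus centred at $\delta$ with inner radius $\bigl||B|n_0^{-\sigma}-|C|n_1^{-\sigma}\bigr|$ and outer radius $|B|n_0^{-\sigma}+|C|n_1^{-\sigma}$. Hence $P(\sigma+it)=0$ is solvable in $t$ iff
$$\bigl||B|n_0^{-\sigma}-|C|n_1^{-\sigma}\bigr|\leq|\delta|\leq|B|n_0^{-\sigma}+|C|n_1^{-\sigma}.$$
Monotonicity of the two sides in $\sigma$ shows that, for $|\delta|$ small enough, the set of $\sigma$ satisfying both inequalities is disconnected: a high-$\sigma$ interval near where the outer bound becomes equality, a low-$\sigma$ set, and between them a strip $(\sigma_1,\sigma_2)$ on which the inner bound fails, so $|P(\sigma+it)|\geq c>0$ uniformly in $t$. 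Choosing $|\delta|$ small also places $(\sigma_1,\sigma_2)$ well above $\sigma=1$ and yields $\sigma^*(P)>\sigma_2>1$.

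\textbf{Transfer and main obstacle.} The tail $R(s)$ is absolutely convergent for $\sigma>1$ and decays uniformly as $\sigma\to\infty$; shrinking $|\delta|$ (after appropriate normalisation of $\bb{c}$) pushes $(\sigma_1,\sigma_2)$ and $\sigma^*(P)$ far to the right into the region of small $R$, so we can ensure $|R(s)|<c/2$ on $(\sigma_1,\sigma_2)\times\R$. Rouch\'e's theorem on horizontal rectangles then gives that $L_{\bb{c}}$ has no zeros in this strip, while the zeros of $P$ near $\sigma^*(P)$ persist as zeros of $L_{\bb{c}}$, so $\sigma^*(L_{\bb{c}})\geq\sigma^*(P)>\sigma_2$. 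The delicate point is this last balance: shrinking $|\delta|$ creates and locates the gap, but may inflate $\|\bb{c}\|$ and hence $R$. The tension is resolved by the exponential decay of $n^{-\sigma}$ for $n\geq 2$ as $\sigma$ grows, which dominates the polynomial growth of $\|\bb{c}\|$ for sufficiently small $|\delta|$, and careful quantitative bookkeeping closes the Rouch\'e estimate.
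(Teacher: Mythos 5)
Your plan — force $L_{\bb{c}}$ to look like a three-term polynomial $P(s)=\delta+Bn_0^{-s}+Cn_1^{-s}$, get a gap in the real parts of the zeros of $P$ via Kronecker, and transfer by Rouch\'e — is a Davenport--Heilbronn-style argument quite different from the paper's, but it has a genuine gap already at the construction step. The hypotheses give you only $\sum_j|a_j(1)|\neq 0$; nothing guarantees the existence of indices $n_0<n_1\geq 2$ with $\log n_0,\log n_1$ rationally independent at which the coefficients of some combination can be made nonzero, let alone that $(b(1),b(n_0),b(n_1))$ can be prescribed with only $N\geq 2$ parameters (for $N=2$ you have two parameters for three targets, and your fallback ``for generic $F_j$'' is not admissible, since the theorem is asserted for every admissible family). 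Concretely, take $F_1(s)=1+2^{-s}$ and $F_2(s)=1+4^{-s}$: every $L_{\bb{c}}$ is supported on $\{1,2,4\}$, the only available frequencies are $\log 2$ and $\log 4$, which are rationally dependent, so the Kronecker density statement (image dense in an annulus) fails and no admissible pair $(n_0,n_1)$ exists — yet the theorem is true for this pair (the zeros lie on two vertical lines which can both be pushed to the right of $\sigma=1$), so the failure is in the method, not a degenerate exception one may discard.

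The transfer step is also not under control. Shrinking $|\delta|$ does not move the zero-free window of $P$ to the right: its left endpoint tends to the fixed crossing point $\sigma_c=\log(|C|/|B|)/\log(n_1/n_0)$, and only the right endpoint and $\sigma^*(P)$ move out; one can of course work in a far-right subinterval, but then the scales matter. Near the rightmost zeros of $P$ (real part close to $\sigma^{**}$ where $|B|n_0^{-\sigma}+|C|n_1^{-\sigma}=|\delta|$), the zeros arise from the tiny term $Cn_1^{-s}$ tipping the near-cancellation of $\delta+Bn_0^{-s}$, so any Rouch\'e contour must work at the scale $|C|n_1^{-\sigma}$. But the tail $R$ generally contains terms $b(m)m^{-s}$ with $2\leq m<n_0$ or $n_0<m<n_1$ which you cannot annihilate with the remaining degrees of freedom, and these decay strictly more slowly than $n_1^{-\sigma}$ (or even than $n_0^{-\sigma}$), hence dominate the comparison at large $\sigma$; the persistence of zeros, i.e.\ $\sigma^*(L_{\bb{c}})>\sigma_2$, is exactly the step that breaks. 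Forcing $n_0,n_1$ to be the first two supported frequencies of $L_{\bb{c}}$ removes this but then collides with the rational-independence requirement, as in the example above. For contrast, the paper's proof avoids all frequency bookkeeping: choose $\bb{c}_0\neq\bb{0}$ with $\sum_j c_{0,j}a_j(1)=0$, so $L_{\bb{c}_0}(\sigma)\to 0$ as $\sigma\to+\infty$ while almost periodicity (Jessen--Tornehave, Theorem 8) gives $|L_{\bb{c}_0}|>\eps$ on a fixed closed strip to the right of $\sigma^*(L_{\bb{c}_0})$; then one picks $\beta$ large so that the hyperplane $\{\bb{x}:L_{\bb{x}}(\beta)=0\}$ passes within $\eps/(4\sqrt{N}M)$ of $\bb{c}_0$ and moves onto it, creating a zero at $\beta>\sigma_2$ without destroying the lower bound on the strip; no Kronecker, no Rouch\'e, and no control of individual coefficients is needed.
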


\begin{remark*} This result is very general, but in particular may be applied to linear combinations of $L$-functions. Moreover, it is easy to show that the same argument works in general for $c$-values.
\end{remark*}

We can actually prove more, i.e. it is in general possible to construct Dirichlet series, given by a linear combination of $L$-functions, which have many distinct vertical strips without zeros.

\begin{theorem}\label{theorem:holes}
Let $k\geq 1$ be an integer and, for $j=1,\ldots,k+1$, let $F_j(s)=\sum_{n=1}^\infty a_j(n)n^{-s}$ be a Dirichlet series absolutely convergent for $\sigma>1$ with $a_j(1)\neq 0$. Suppose that
\begin{equation}\label{eq:det}
\det\begin{pmatrix}
a_1(1) & a_1(2) & \cdots & a_1(k+1)\\
a_2(1) & a_2(2) & \cdots & a_2(k+1)\\
\vdots & \vdots & \ddots & \vdots\\
a_{k+1}(1) & a_{k+1}(2) & \cdots & a_{k+1}(k+1)\\
\end{pmatrix}\neq 0.
\end{equation}
Then there exist infinitely many $\bb{c}\in \C^{k+1}$ such that the Dirichlet series $L_{\bb{c}}(s)=\sum_{j=1}^{k+1} c_jF_j(s)$ has at least $k$ distinct vertical strips without zeros in the region $1<\sigma<\sigma^*(L_{\bb{c}})$.
\end{theorem}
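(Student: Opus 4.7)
The plan is to use the invertibility hypothesis \eqref{eq:det} to prescribe the first $k+1$ Dirichlet coefficients of $L_{\bb{c}}(s)$, and then to choose them so that the resulting ``main'' Dirichlet polynomial has $k$ well-separated transition points above the line $\sigma=1$, each producing a narrow zero region flanked on the lower side by a zero-free vertical strip.

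Setting $b(n)=\sum_{j=1}^{k+1}c_j a_j(n)$, condition \eqref{eq:det} makes the map $\bb{c}\mapsto(b(1),\ldots,b(k+1))$ a linear isomorphism of $\C^{k+1}$, so I can prescribe $(b(1),\ldots,b(k+1))$ arbitrarily; this fixes $\bb{c}$, and therefore all the remaining $b(n)$, which are fixed linear combinations of $b(1),\ldots,b(k+1)$ with coefficients depending only on the $F_j$. Decomposing $L_{\bb{c}}(s)=P(s)+R(s)$ with $P(s)=\sum_{n=1}^{k+1}b(n)n^{-s}$ and tail $R(s)=\sum_{n>k+1}b(n)n^{-s}$, one obtains a bound of the form $|R(s)|\leq K\bigl(\max_i|b(i)|\bigr)(k+2)^{-\sigma}$ for $\sigma>1$, with $K$ independent of the choice.

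Next I would choose positive magnitudes $\rho_1,\ldots,\rho_{k+1}$ so that the balance points
\[ \tau_j:=\frac{\log(\rho_j/\rho_{j+1})}{\log\bigl((j+1)/j\bigr)}, \qquad j=1,\ldots,k, \]
satisfy $\tau_1>\tau_2>\cdots>\tau_k>1$ with large pairwise gaps, and set $|b(n)|=\rho_n$. On each closed interval $[\tau_{j+1}+\delta,\tau_j-\delta]$ (with the conventions $\tau_0=+\infty$, $\tau_{k+1}=1$, and $\delta>0$ small), a single term of $P$ strictly dominates the rest, giving a definite positive lower bound on $|P(s)|$. By making the ratios $\rho_j/\rho_{j+1}$ large enough --- which simultaneously pushes the $\tau_j$ upward and widens the dominance margin --- I can arrange $|R(s)|<|P(s)|$ on each of the $k$ resulting strips, so $L_{\bb{c}}$ is zero-free there. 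A Rouch\'e-style argument near each $\tau_j$, modeled on the one used for Theorem~\ref{theorem:one_hole} applied to the two-term approximation $b(j)j^{-s}+b(j+1)(j+1)^{-s}$, then produces zeros of $L_{\bb{c}}$ close to $\tau_j$, so that $\sigma^*(L_{\bb{c}})\geq\tau_1$ and all $k$ strips lie in $(1,\sigma^*(L_{\bb{c}}))$. Since small perturbations of the $\rho_n$ preserve every strict inequality in the construction, one obtains infinitely many admissible $\bb{c}$.

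The principal obstacle is the coupled scaling of $P$ and $R$: the tail coefficients are determined from $b(1),\ldots,b(k+1)$ by fixed linear maps, so $|R|$ scales with $\max_i|b(i)|$ exactly as $|P|$ does and cannot be shrunk relative to $P$ by any overall rescaling. This forces a delicate choice of the ratios $\rho_j/\rho_{j+1}$ --- not just their product --- so that the dominance margin of $P$ beats the tail on all $k$ strips simultaneously. The complementary task of producing zeros of $L_{\bb{c}}$ near each $\tau_j$ is a local analogue of Theorem~\ref{theorem:one_hole} and should not introduce essentially new difficulties.
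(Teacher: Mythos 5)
Your route is genuinely different from the paper's. The paper argues by induction on the number of strips: working with the hyperplanes $\{\mathbf{x}\in\C^{k+1} : x_1F_1(\sigma)+\cdots+x_{k+1}F_{k+1}(\sigma)=0\}$, it builds vectors $v_{h+1}(\beta_1,\ldots,\beta_h,\infty,\ldots,\infty)$ whose combinations vanish exactly at prescribed real points $\beta_1<\cdots<\beta_h$; the determinant \eqref{eq:det} enters to make the ``limit'' vectors (trailing arguments sent to $\infty$) well defined as generators of one-dimensional solution spaces of systems mixing the equations $\sum_j x_jF_j(\beta_l)=0$ with $\sum_j x_ja_j(n)=0$, and at each step the new vector is chosen so close to the previous one (lower bound for $|L|$ on closed strips to the right of $\sigma^*$ via Jessen--Tornehave, plus Cauchy--Schwarz) that all previously secured zero-free strips survive, while the exact zero imposed at a much larger $\beta_{h+1}$ creates one more strip. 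You instead use \eqref{eq:det} to prescribe the first $k+1$ coefficients $b(1),\ldots,b(k+1)$ of $L_{\mathbf{c}}$ and build a multi-transition Davenport--Heilbronn staircase: single-term dominance of the initial polynomial on $k$ high strips gives zero-freeness, and zeros near each transition come from phase alignment of the two balancing terms plus Rouch\'e. The paper's argument is softer (no quantitative dominance estimates, exact real zeros at the $\beta_l$); yours, once the estimates are carried out, is more explicit in the Dirichlet coefficients and lets you prescribe the transition heights directly.

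Two points in your sketch need repair. First, the balance points are written upside down: with $|b(n)|=\rho_n$, the terms $b(j)j^{-s}$ and $b(j+1)(j+1)^{-s}$ have equal modulus at $\sigma=\log(\rho_{j+1}/\rho_j)/\log\bigl((j+1)/j\bigr)$, so you need rapidly \emph{increasing} moduli $\rho_1\ll\rho_2\ll\cdots\ll\rho_{k+1}$; with your formula, $\tau_j>1$ forces decreasing $\rho_n$, in which case the first term has the largest modulus at every $\sigma>0$, the two-term approximations vanish only on lines of negative abscissa, and the Rouch\'e step producing zeros near the $\tau_j$ has nothing to work with. Second, the tail bound $|R(s)|\le K\bigl(\max_i|b(i)|\bigr)(k+2)^{-\sigma}$ holds with $K$ depending on the $F_j$ only for $\sigma\ge\sigma_0>1$, and near $\sigma=1$ the tail need not be dominated, so the lowest strip cannot be taken down to $1+\delta$; but the theorem does not require that, so it suffices to keep all strips above a fixed $\sigma_0$ depending only on the $F_j$ and $k$, which you can do since the $\tau_j$ may be taken arbitrarily large. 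This also dissolves the ``coupled scaling'' obstacle you single out: with increasing $\rho_n$ one has $\max_i|b(i)|=\rho_{k+1}$, and on each strip the dominating term of $P$ majorizes $\rho_{k+1}(k+1)^{-\sigma}$, so the tail is smaller by a factor $K\bigl((k+1)/(k+2)\bigr)^{\sigma}<\tfrac12$ once $\sigma$ is large; the remaining bookkeeping is the elementary upper-envelope fact that strictly decreasing balance points give single-term dominance with margin at least $\bigl(1+\tfrac{1}{k+1}\bigr)^{\delta}$ at distance $\delta$ from the transitions, with $\delta$ chosen so that this beats the sum of the other $k$ terms and the tail. With these corrections your construction does prove the theorem, by a method different from the paper's.
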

\begin{remark*} Note that the vectors $\bb{c}$ of Theorem \ref{theorem:one_hole} cover a cone (without the vertex $\bb0$) generated by the intersection of an open ball with an hyperplane in $\C^{k+1}$, while the vectors $\bb{c}$ of Theorem \ref{theorem:holes} contain a line passing through the origin $\bb0$, excluding the origin itself. See the proofs in Section \ref{section:holes} for details.
\end{remark*}

The proof of Theorem \ref{theorem:holes} is actually constructive and may be used to explicitly obtain coefficients. As a concrete example we apply it to $\zeta(s)$, $L(s,\chi_1)$ and $L(s,\conj{\chi_1})$, where $\chi_1$ is the unique Dirichlet character mod $5$ such that $\chi_1(2)=i$, which satisfy the hypotheses of Theorem \ref{theorem:holes}. Hence we obtain the Dirichlet series
$$L(s)=c_1L(s,\chi_1)+c_2 L(s,\conj{\chi_1})+c_3\zeta(s),$$
where
\begin{spliteq*}
c_1 &= -\frac{1}{L(8,\conj{\chi_1})}\frac{L(16,\conj{\chi}_1)\zeta(8)-L(8,\conj{\chi}_1)\zeta(16)}{L(16,\chi_1)\zeta(8)-L(8,\chi_1)\zeta(16)} =  -0.08260584\ldots - i0.99658995\ldots,\\
c_2 &= \frac{1}{L(8,\conj{\chi_1})} =   1.00000059\ldots + i0.00375400\ldots,\\
c_3 &= \frac{1}{L(8,\conj{\chi_1})}\frac{{L(8,\chi_1)}{L(16,\conj{\chi}_1)}-{L(8,\conj{\chi}_1)}{L(16,\chi_1)}}{{\zeta(8)}{L(16,\chi_1)}-{L(8,\chi_1)}{\zeta(16)}} = -0.91739597\ldots + i0.99283727\ldots\,.
\end{spliteq*}

In Figure \ref{fig:due_buchi} we see part of two distinct vertical strips without zeros of $L(s)$ within the vertical strip $1<\sigma<\sigma^*$. We recall that, by Saias and Weingartner \cite{saias}, there are zeros in the vertical strip $1<\sigma<1+\eta$ for some $\eta>0$. Actually, in \cite{righetti} we have remarked that, as a consequence of the technique used to prove the main result, the real parts of the zeros of non-trivial combinations of orthogonal $L$-functions are dense in a small interval $[1,1+\eta]$, for some $\eta>0$ (cf. \cite[Corollary 1]{righetti}).

\begin{figure}[htb]
	\centering
		\includegraphics[width=14cm]{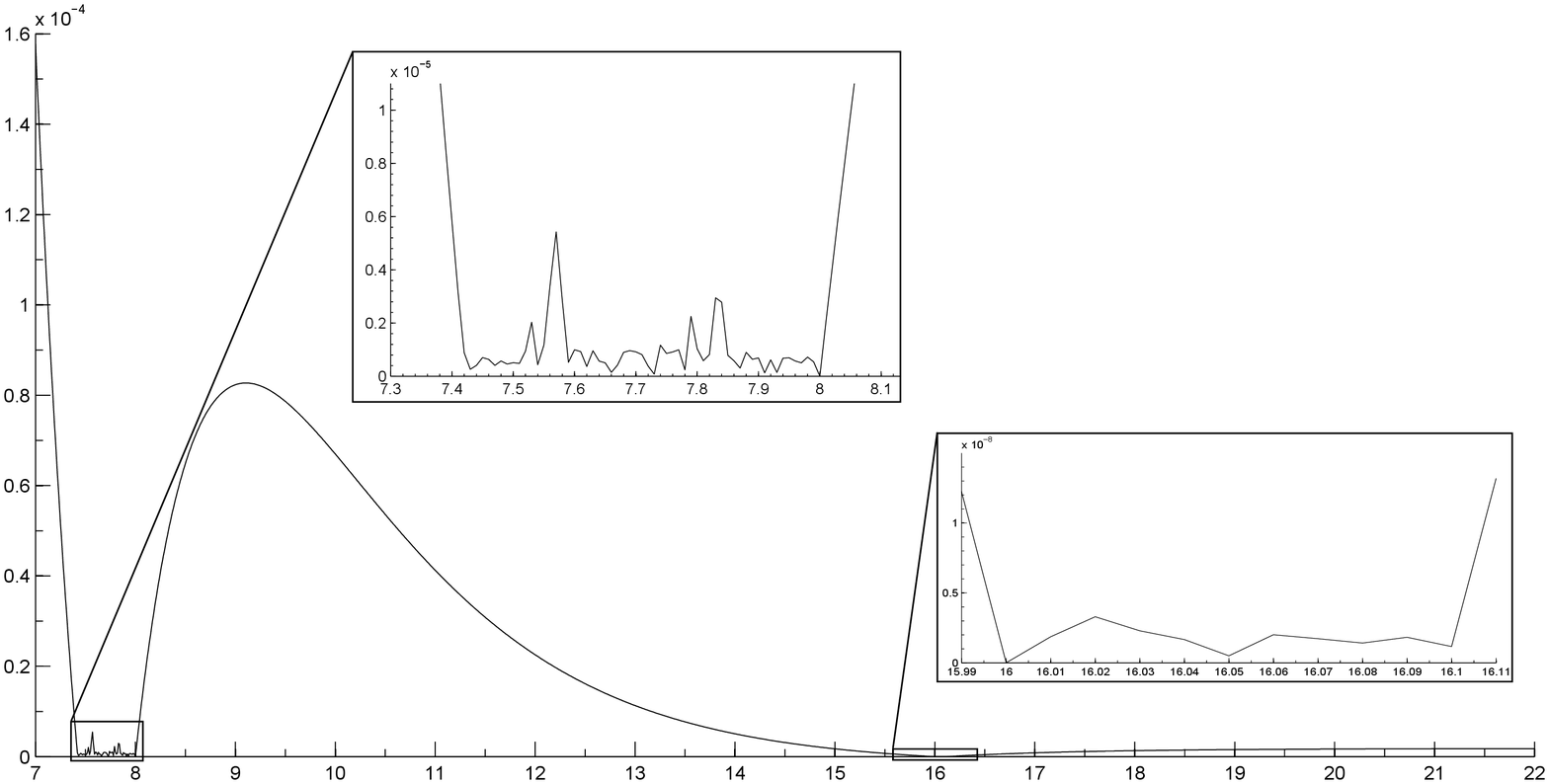}
	\caption{Approximate plot of $\min\limits_{t} |c_1L(\sigma+it,\chi_1)+c_2 L(\sigma+it,\conj{\chi_1})+ c_3\zeta(\sigma+it)|$ for $\sigma\in[7,22]$ and $t\in[0,2000]$ with step 0.01.}
	\label{fig:due_buchi}
\end{figure}

What we know in general about the real parts of the zeros of a Dirichlet series is the following result, which is a consequence of work of Jessen and Tornehave \cite{jessentornehave}.
\begin{theorem}\label{theorem:real_parts}
Let $L(s)=\sum_{n=n_0}^\infty \frac{a(n)}{n^s}$, with $a(n_0)\neq 0$, be uniformly convergent for $\sigma>\alpha$. Then in any vertical strip $\alpha<\alpha_1\leq\sigma\leq\sigma^*(L)$, $L(s)$ has only a finite number of zero-free vertical strips and a finite number of isolated vertical lines containing zeros.\\
In particular, if $\rho_0=\beta_0+i\gamma_0$ is a zero of $L(s)$ with $\beta_0>\alpha$, then either $\sigma=\beta_0$ is an isolated vertical line as above or there exist $\sigma_1\leq \beta_0\leq \sigma_2$, with $\sigma_1<\sigma_2$, such that the set
$$\{\beta\in[\sigma_1,\sigma_2]\mid \exists \gamma\in\R\hbox{ such that }L(\beta+i\gamma)=0\}$$
is dense in $[\sigma_1,\sigma_2]$.
\end{theorem}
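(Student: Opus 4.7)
The strategy is to invoke the Jessen--Tornehave theory \cite{jessentornehave} of the distribution of zeros of analytic almost periodic functions in a vertical strip. Since $L(s)$ is uniformly convergent in every closed sub-strip of $\{\sigma>\alpha\}$, the function $L(\sigma+it)$ is uniformly almost periodic in $t$ on each such sub-strip, so the theory applies. The central object is the Jensen function
$$\varphi(\sigma):=\lim_{T\to\infty}\frac{1}{2T}\int_{-T}^{T}\log|L(\sigma+it)|\,dt,$$
which by Jessen--Tornehave exists, is continuous and convex on $(\alpha,\infty)$, and whose one-sided derivatives $\varphi'_{\pm}$ encode the asymptotic density of zeros through
$$\lim_{T\to\infty}\frac{N_L(\sigma_1,\sigma_2;T)}{2T}=\frac{\varphi'_{-}(\sigma_2)-\varphi'_{+}(\sigma_1)}{2\pi},\qquad \alpha<\sigma_1<\sigma_2,$$
where $N_L(\sigma_1,\sigma_2;T)$ counts the zeros $\beta+i\gamma$ of $L$ with $\sigma_1<\beta<\sigma_2$ and $|\gamma|\leq T$.

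From this formula I would read off the dictionary that drives the proof: an open sub-strip $(a,b)$ has asymptotic zero density zero if and only if $\varphi$ is affine on $[a,b]$ (this controls zero-free strips); a line $\sigma=\sigma_0$ is an isolated vertical line carrying zeros if and only if $\varphi'$ has a jump discontinuity at $\sigma_0$ while being locally constant on each side; and if $\varphi$ is strictly convex on an interval $[\sigma_1,\sigma_2]$, then every open sub-interval has positive density of zeros, so the real parts of zeros of $L$ are dense in $[\sigma_1,\sigma_2]$.

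The principal input is then the structural claim that, in every compact sub-strip $[\alpha_1,\sigma^*]$, the convex function $\varphi$ has only finitely many maximal affine pieces and only finitely many jumps of $\varphi'$. I would derive this by approximating $L$ by its Dirichlet polynomial truncations $L_N(s)=\sum_{n=n_0}^{N}a(n)n^{-s}$, whose Jensen functions are piecewise affine with a Newton-polygon-type structure controlled by the spectrum $\{\log n : n_0\leq n\leq N,\ a(n)\neq 0\}$, and then using uniform convergence of $L_N$ to $L$ on compact sub-strips, together with the right-end asymptotic $\varphi(\sigma)\sim \log|a(n_0)|-\sigma\log n_0$ as $\sigma\to\infty$ (secured by $a(n_0)\neq 0$), to prevent exceptional features from accumulating.

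Granted the structural claim, the first assertion of the theorem is immediate. For the ``in particular'' statement, a zero $\rho_0=\beta_0+i\gamma_0$ with $\beta_0>\alpha$ either coincides with one of the finitely many isolated vertical lines of zeros, or else lies in one of the finitely many maximal closed sub-intervals $[\sigma_1,\sigma_2]\subseteq[\alpha_1,\sigma^*]$ on which $\varphi$ is strictly convex, in which case the third item of the dictionary yields the density. The main obstacle will be the structural claim itself: the raw Jessen--Tornehave theorem delivers only the convexity of $\varphi$, whereas the finiteness of affine pieces and of $\varphi'$-jumps in a compact strip is the Dirichlet-series-specific input and requires a careful transfer of the piecewise-affine structure from the truncations to the limit.
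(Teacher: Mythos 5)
Your overall architecture (Jensen function, the Jessen--Tornehave zero-density formula $\lim N_L(\sigma_1,\sigma_2;T)/2T=(\varphi'(\sigma_2)-\varphi'(\sigma_1))/2\pi$, and the set-theoretic reduction of the second statement to the first) is exactly the machinery the paper relies on: the paper disposes of the first statement by citing Theorems 31 and 8 of \cite{jessentornehave} and of the second by the same dictionary argument you give. The genuine gap is in the one place you yourself flag as the main obstacle: the structural finiteness claim. Your proposed mechanism --- approximate $L$ by the truncations $L_N$, whose Jensen functions are piecewise affine, and ``transfer the piecewise-affine structure to the limit'' via uniform convergence plus the asymptotic $\varphi(\sigma)\sim\log|a(n_0)|-\sigma\log n_0$ --- cannot work as stated. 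Uniform convergence of convex functions does not preserve piecewise-affine structure: \emph{every} convex function is a uniform limit of piecewise-affine convex functions, and the number of affine pieces of $\varphi_{L_N}$ grows without bound in general. Moreover $\varphi_L$ is genuinely \emph{not} piecewise affine in the cases this paper cares about (Davenport--Heilbronn-type combinations have intervals of strict convexity), so there is no affine structure in the limit to transfer; nothing in your ingredients rules out, say, infinitely many maximal affine pieces accumulating inside $[\alpha_1,\sigma^*]$.

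What is actually needed, and what Theorem 31 of \cite{jessentornehave} (used ``in view of'' Theorem 8) supplies, is a quantization statement specific to ordinary Dirichlet series: on a closed substrip free of zeros, $L$ is bounded away from zero (Theorem 8 / almost periodicity), and there the Jensen function is affine with slope of the form $-\log m$ for a positive integer $m$ (Bohr--Jessen--Tornehave mean-motion theory; one can indeed get this from the truncations, since for $N$ large $L_N$ is also zero-free there and its slope $-\log m_N$ lies in the discrete set $\{-\log m\}$, forcing the limit slope into that set --- but this is the step you must make explicit, not a soft ``no accumulation'' argument). Combined with the fact that a zero of an almost periodic function forces a positive density of zeros in every surrounding substrip (so $\varphi'$ strictly increases across every real part of a zero --- this is also what justifies your dictionary items ``affine $\Leftrightarrow$ zero-free'' and the jump at an isolated line), the slopes on successive maximal zero-free strips are strictly increasing elements of the discrete set $\{-\log m\}$ confined to the bounded interval $[\varphi'_+(\alpha_1),\varphi'_-(\sigma^*)]$, whence there are finitely many such strips and hence finitely many isolated lines. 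Note also that your intermediate claim that $\varphi'$ has only finitely many jumps in the compact strip is stronger than what Theorem \ref{theorem:real_parts} asserts (only isolated points of the set of real parts of zeros are counted) and is not delivered by your sketch; you should drop it rather than try to prove it.
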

The first statement is a reinterpretation of Theorem 31 of Jessen and Tornehave \cite{jessentornehave} in view of Theorem 8 of \cite{jessentornehave}. The second statement follows from the first one by a simple set-theoretic argument.

In this paper we also prove that a linear combination of \emph{orthogonal} (see Definition \ref{definition:orthogonality}) Euler products has no isolated vertical lines containing zeros. As in \cite{righetti} we work in an axiomatic setting. Given a complex function $F(s)$ we consider the following properties:
\begin{enumerate}[(I)]
\item\label{hp:DS} $\displaystyle F(s)=\sum_{n=1}^\infty \frac{a_F(n)}{n^s}$ is absolutely convergent for $\sigma>1$;
\item\label{hp:EP} $\displaystyle \log F(s) = \sum_p \sum_{k=1}^\infty\frac{b_F(p^k)}{p^{ks}}$ is absolutely convergent for $\sigma>1$, with $|b_F(p^k)|\ll p^{k\theta}$ for every prime $p$ and every $k\geq 1$, for some $\theta<\frac{1}{2}$;
\item\label{hp:RC} for any $\eps>0$, $|a_F(n)|\ll n^{\eps}$ for every $n\geq 1$.
\end{enumerate}

\begin{remark*}
If $F(s)$ satisfies \ref{hp:EP}, then $F(s)\neq 0$ for $\sigma>1$, i.e. $\sigma^*(F)\leq 1$.
\end{remark*}

\begin{definition}\label{definition:orthogonality}
For any integer $N\geq 1$, we say that $F_1(s),\ldots,F_N(s)$ satisfying \ref{hp:DS} are \emph{orthogonal} if
\begin{equation}\label{eq:SC}
\sum_{p\leq x} \frac{a_{F_i}(p)\conj{a_{F_j}(p)}}{p} = (m_{i,j}+o(1))\log\log x,\quad x\rightarrow \infty,
\end{equation}
with $m_{i,i}>0$ and $m_{i,j}=0$ if $i\neq j$.
\end{definition}

We can now state the main theorems. We consider separately the cases $N=2$ and $N\geq3$ since they are handled in different ways and yield different results, although the underling idea is the same.

\begin{theorem}\label{theorem:main1}
Let $F_1(s),\,F_2(s)$ be orthogonal functions satisfying \ref{hp:DS} and \ref{hp:EP}, $c_1,c_2\in\C\setminus\{0\}$, and $L(s)=c_1F_1(s)+c_2F_2(s)$. Then $L(s)$ has no isolated vertical lines containing zeros in the half-plane $\sigma>1$.
\end{theorem}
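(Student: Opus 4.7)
The plan is to argue by contradiction within the Bohr--Jessen framework. Since $F_1,F_2$ satisfy \ref{hp:DS} and \ref{hp:EP}, the function $L(s)$ is uniformly almost periodic on every vertical line $\sigma>1$, and Bohr's theorem identifies the value distribution of $t\mapsto L(\sigma+it)$ with the pushforward of the Haar measure on $\mathbb{T}^{\infty}$ under
\[
\omega\longmapsto L(\sigma,\omega):=c_1F_1(\sigma,\omega)+c_2F_2(\sigma,\omega),\qquad F_j(\sigma,\omega)=\sum_{n\geq 1}\frac{a_{F_j}(n)\omega(n)}{n^\sigma},
\]
with $\omega(n)=\prod_p\omega_p^{v_p(n)}$ ($v_p(n)$ denoting the $p$-adic valuation). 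Suppose for contradiction that $\sigma=\beta_0>1$ is an isolated vertical line containing a zero $\rho_0=\beta_0+i\gamma_0$ of $L$, so that by Theorem~\ref{theorem:real_parts} there exists $\delta>0$ with $L(\sigma+it)\ne 0$ for every $t\in\R$ whenever $0<|\sigma-\beta_0|<\delta$. If one can show that $\{L(\beta_0,\omega):\omega\in\mathbb{T}^{\infty}\}$ contains a neighborhood of $0\in\C$, then continuity in $\sigma$ keeps $0$ inside the image for $\sigma$ close to $\beta_0$; Kronecker's theorem realises each such value as a limit of translates $L(\sigma+i\tau)$, and Rouch\'e's theorem then produces a genuine zero of $L$ with real part close to (and distinct from) $\beta_0$, contradicting the hypothesis.

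The core task is to establish this image-openness at the basepoint $\omega_0=1\in\mathbb{T}^{\infty}$, where $L(\beta_0,1)=L(\rho_0)=0$. Using the Euler decomposition $F_j(\beta_0,\omega)=L_{j,p}(\omega_p,\beta_0)\,F_j^{(\setminus p)}(\beta_0,\omega_{\setminus p})$ together with the non-vanishing of the deleted partial products $F_j^{(\setminus p)}$ (a consequence of \ref{hp:EP}, which furnishes an absolutely convergent log-Euler series on the complement of $p$), one obtains for each prime $p$
\[
v_p:=\omega_p\frac{\partial L}{\partial\omega_p}(\beta_0,\omega)\bigg|_{\omega=1}=A\,\frac{a_{F_1}(p)-a_{F_2}(p)}{p^{s_0}}+O\bigl(p^{-2(\beta_0-\theta)}\bigr),\qquad A:=c_1 F_1(s_0)\ne 0.
\]
If there exist primes $p_1,p_2$ for which $v_{p_1}$ and $v_{p_2}$ are $\R$-linearly independent in $\C$, then the two-parameter restriction $(\omega_{p_1},\omega_{p_2})\mapsto L(\beta_0,\omega)$ is a local submersion $\mathbb{T}^{2}\to\C$ at the basepoint, so its image contains an open neighborhood of $0$, completing the reduction. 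It therefore suffices to rule out the degenerate case in which every $v_p$ lies on a single real line of $\C$; this collinearity is equivalent to the rigid constraint $\arg(a_{F_1}(p)-a_{F_2}(p))\equiv\phi+\gamma_0\log p\pmod\pi$ for all sufficiently large $p$ and some fixed $\phi\in\R$.

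The hard part will be turning this collinearity constraint into an explicit contradiction via orthogonality. The combination of $\sum_{p\le x}|a_{F_1}(p)-a_{F_2}(p)|^2/p\sim(m_{1,1}+m_{2,2})\log\log x$ (which diverges by $m_{1,1},m_{2,2}>0$) and $\sum_{p\le x}a_{F_1}(p)\overline{a_{F_2}(p)}/p=o(\log\log x)$ (given by $m_{1,2}=0$) imposes a non-trivial cross-correlation on the coefficient sequences; coupling these with the collinearity and expanding through a Cauchy--Schwarz-type computation that incorporates the higher Euler coefficients $b_{F_j}(p^k)$, $k\ge 2$, allowed by \ref{hp:EP}, should yield the desired inconsistency, particularly because the leading-order Cauchy--Schwarz bound fails only by the slack factor $\sqrt{(m_{1,1}+m_{2,2})/m_{1,1}}>1$. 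Should this direct Jacobian route prove too delicate in borderline configurations (notably when $\gamma_0=0$), a back-up plan is to upgrade to the joint distribution of $(F_1(\beta_0,\omega),F_2(\beta_0,\omega))$ on $\C^2$ and invoke a joint universality-type density statement licensed by orthogonality, guaranteeing that the image of $L(\beta_0,\cdot)$ automatically spans an open subset of $\C$ around any attained value.
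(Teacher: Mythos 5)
Your overall scaffolding (Bohr lift to the infinite torus, transfer of torus zeros back to zeros of $L$ via Kronecker plus Rouch\'e, contradiction with Theorem \ref{theorem:real_parts}) is workable for $\sigma>1$, but the proof has a genuine gap exactly at the step you yourself flag as the hard part: the degenerate case in which all the vectors $v_p$ are $\R$-collinear cannot be ruled out by orthogonality, because it actually occurs under the hypotheses of the theorem. Take $F_1(s)=L(s,\chi)$, $F_2(s)=L(s,\conj{\chi})$ with $\chi$ a complex Dirichlet character (these are orthogonal, since $\sum_{p\le x}\chi(p)^2/p=O(1)$ when $\chi^2$ is non-principal) and a combination possessing a real zero $\rho_0=\beta_0$, $\gamma_0=0$; the paper's own example $L(s,\chi_1)-\frac{L(8,\chi_1)}{L(8,\conj{\chi_1})}L(s,\conj{\chi_1})$, which vanishes at $s=8$, is of this shape. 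At such a zero one computes exactly, not just to first order,
\[
\omega_p\frac{\partial}{\partial\omega_p}\log F_{1,p}-\omega_p\frac{\partial}{\partial\omega_p}\log F_{2,p}
=\frac{\bigl(\chi(p)-\conj{\chi(p)}\bigr)p^{-\beta_0}}{\bigl|1-\chi(p)p^{-\beta_0}\bigr|^{2}},
\]
which is purely imaginary for every prime $p$, so every $v_p$ lies on the single line $iA\,\R$: no two of them are $\R$-independent, the two-prime submersion fails, and consequently no Cauchy--Schwarz computation based on \eqref{eq:SC} can yield the contradiction you hope for. Orthogonality constrains the sizes and cross-correlations of the coefficients, not the arguments of the differences $a_{F_1}(p)-a_{F_2}(p)$, and the constraint $\arg(a_{F_1}(p)-a_{F_2}(p))\equiv\phi+\gamma_0\log p \pmod{\pi}$ is perfectly compatible with \eqref{eq:SC}. (Two smaller points: the collinearity you would need to exclude concerns the exact $v_p$, error terms included, so the stated equivalence with the argument condition is itself unjustified; and the basepoint should be the torus point encoding the shift $i\gamma_0$, not $\omega=1$.)

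In the degenerate case the value $0$ may well still be interior to the image (the theorem is true), but establishing this requires second-order, curvature-type information about the local curves $\theta\mapsto\log(F_{1,p}/F_{2,p})(\beta_0+2\pi i\theta/\log p)$ and their infinite sum -- i.e.\ essentially the Jessen--Wintner/Borchsenius--Jessen value-distribution analysis (or a Kershner/Bombieri--Ghosh style study of the inner boundary of a sum of convex curves), not a first-order Jacobian at a single torus point. That is precisely what the paper does: it rewrites $L(s)=0$ as $\log(F_1/F_2)(s)=\log(-c_2/c_1)$, uses the convexity results and Theorem \ref{theorem:bound_trasf_fourier_primes} to show that the weighted distribution functions $\lambda_{\sigma,n}$ are absolutely continuous with continuous densities converging to $G_\sigma$, identifies $\varphi''_{\log(F_1/F_2)-c}(\sigma)=2\pi G_\sigma(c)$, and concludes with Jessen--Tornehave's Theorem 31 and almost periodicity that the real part of any zero is a limit point of real parts of zeros. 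Your backup plan of invoking a ``joint universality-type density statement'' is not an available tool here: universality phenomena live in $\tfrac12<\sigma<1$, and for $\sigma>1$ the density statement you would need is exactly the missing content. As it stands, the proposal does not prove the theorem.
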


\begin{theorem}\label{theorem:main2}
Let $N\geq 3$ be an integer,  $c_1,\ldots,c_N\in\C\setminus\{0\}$, $c\in\C$, and $F_1(s),\ldots,F_N(s)$ orthogonal functions satisfying \ref{hp:DS}, \ref{hp:EP} and \ref{hp:RC}. If we write $L(s)=\sum_{j=1}^N c_jF_j(s)-c$, then $L(s)$ has no isolated vertical lines containing zeros in the half-plane $\sigma>1$.
\end{theorem}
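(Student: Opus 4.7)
Following the Bohr--Jessen / Jessen--Tornehave framework already employed in \cite{righetti}, the plan is to pass to a probabilistic model and prove a geometric openness statement for the limit distribution of $L$. Associate to $L$ the random Dirichlet series
$$L^\sharp(\sigma,\omega) = \sum_{j=1}^N c_j F_j^\sharp(\sigma,\omega) - c, \qquad F_j^\sharp(\sigma,\omega) = \sum_{n=1}^\infty a_{F_j}(n)\,\omega(n)\,n^{-\sigma},$$
where $\omega=(\omega_p)_p\in \Omega:=\prod_p S^1$ is Haar-distributed and $\omega(n)=\prod_p \omega_p^{v_p(n)}$. By Kronecker--Weyl and \ref{hp:DS}, for each $\sigma>1$ the value distribution of $t\mapsto L(\sigma+it)$ coincides with that of $\omega\mapsto L^\sharp(\sigma,\omega)$; denote its closed support by $K_\sigma\subset\C$. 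In view of Theorem \ref{theorem:real_parts} together with the Jessen--Tornehave link between zeros on a vertical line and the condition $0\in K_\sigma$, it suffices to show the following: \emph{if $0\in K_{\sigma_0}$ for some $\sigma_0>1$, then $0$ belongs to the interior of $K_\sigma$ for every $\sigma$ in an open neighborhood of $\sigma_0$.} This immediately rules out $\sigma=\sigma_0$ being isolated.

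\textbf{Step 1 (Continuity).} Using \ref{hp:DS} and \ref{hp:RC}, I would show that $\sigma\mapsto K_\sigma$ is continuous in the Hausdorff metric on $\C$ for $\sigma>1$, via uniform tail estimates of $L^\sharp$ on $\Omega$. \textbf{Step 2 (Openness).} I would prove that every point of $K_\sigma$ is an interior point. Fix $\omega_0\in\Omega$ with $L^\sharp(\sigma,\omega_0)=z_0$, choose two distinct primes $p,q$, and freeze all $\omega_r$ for $r\notin\{p,q\}$; expanding the Euler factors via \ref{hp:EP} and differentiating in $\omega_p,\omega_q$ at $\omega_p=\omega_q=1$, the two first-order tangent vectors of $\omega\mapsto L^\sharp(\sigma,\omega)$ are, up to negligible higher-order terms,
$$A_p(\omega_0) = \sum_{j=1}^N c_j\,b_{F_j}(p)\,p^{-\sigma}\!\prod_{r\neq p}\! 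F_{j,r}^\sharp(\sigma,\omega_0), \qquad A_q(\omega_0) = \sum_{j=1}^N c_j\,b_{F_j}(q)\,q^{-\sigma}\!\prod_{r\neq q}\! F_{j,r}^\sharp(\sigma,\omega_0),$$
where $F_{j,r}^\sharp$ denotes the local Euler factor at $r$. Using orthogonality \eqref{eq:SC} together with $N\geq 3$, I would produce, for almost every $\omega_0\in\Omega$, a pair $(p,q)$ such that $A_p(\omega_0)/A_q(\omega_0)\notin\R$; the inverse function theorem then gives a local diffeomorphism from a neighborhood of $(1,1)\in S^1\times S^1$ onto a neighborhood of $z_0$ in $\C$, showing that $z_0$ is interior to $K_\sigma$. \textbf{Step 3 (Conclusion).} If $0\in K_{\sigma_0}$, Step 2 yields a closed disc $\overline{B}(0,\delta)\subset K_{\sigma_0}$; by Step 1 this disc persists (with at worst slightly reduced radius) for $\sigma$ in an open interval around $\sigma_0$, so each such vertical line meets $0\in K_\sigma$ and therefore carries zeros of $L$.

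The main obstacle is Step 2, and specifically the decorrelation argument needed in the presence of the shift $c$. With only $N=2$ summands, the two coefficient vectors $(c_j b_{F_j}(p))_{j=1}^N$ and $(c_j b_{F_j}(q))_{j=1}^N$ may fail to produce $\R$-linearly independent tangent directions after weighting by the frozen factors $\prod_{r\neq p,q}F_{j,r}^\sharp$ (this is precisely why Theorem \ref{theorem:main1} demands a separate treatment and Theorem \ref{theorem:main2} requires $N\geq 3$); the orthogonality condition \eqref{eq:SC} is what guarantees that, among the primes with $b_{F_j}(p)$ large in distinct ``directions'' $j$, one can extract a pair $(p,q)$ realizing the desired non-real ratio on a set of $\omega$ of full measure. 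Hypothesis \ref{hp:RC} is used to bound the higher-order Euler-factor contributions so that the first-order analysis above genuinely governs the local image, and to ensure the perturbation argument is uniform on compact $\sigma$-intervals.
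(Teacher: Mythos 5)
Your overall strategy (pass to the random model, identify the value distribution of $t\mapsto L(\sigma+it)$ with that of $L^\sharp(\sigma,\cdot)$, and convert information about the point $0$ into zeros of $L$) is in the spirit of the literature, but Step 2 contains the decisive error. For fixed $\sigma>1$ the function $t\mapsto L(\sigma+it)$ is bounded, so $K_\sigma$ is a nonempty \emph{compact} subset of $\C$; a nonempty compact set cannot consist only of interior points, so the assertion ``every point of $K_\sigma$ is an interior point'' is false (for large $\sigma$, $K_\sigma$ is a small compact set clustered about $\sum_j c_j-c$, and its boundary points belong to it). What your two-prime inverse-function argument could at best give is that for almost every $\omega_0$ the value $L^\sharp(\sigma,\omega_0)$ is an interior point of the image -- morally, that the limiting distribution has no singular part, which is close to what the paper proves by other means -- but it says nothing when $0$ lies on the \emph{boundary} of $K_{\sigma_0}$, and that is exactly the delicate case of the theorem: for instance a zero on the line $\sigma=\sigma^*(L)$, or on the edge of a zero-free vertical strip, configurations which Theorems \ref{theorem:one_hole} and \ref{theorem:holes} show really occur. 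In such a case $0\notin K_\sigma$ for $\sigma>\sigma_0$ (by almost periodicity, Jessen--Tornehave), so $0$ cannot be interior to $K_\sigma$ on the right of $\sigma_0$, and there is no reason for interiority at $\sigma_0$ itself; yet the theorem still requires real parts of zeros to accumulate at $\sigma_0$. Two further gaps: Hausdorff continuity (Step 1) does not imply in Step 3 that a disc $\overline{B}(0,\delta)\subset K_{\sigma_0}$ persists inside $K_\sigma$ (two sets can be Hausdorff-close while one misses a small disc about $0$), and the implication from ``$0$ interior to $K_\sigma$'' to actual zeros with real part near $\sigma$ still needs a Rouch\'e/almost-periodicity argument that you do not supply.

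For comparison, the paper does not argue through supports at all. It works with the Borchsenius--Jessen distribution functions of $L_n(\sigma,\boldsymbol\theta)$ weighted by $|L_n'(\sigma,\boldsymbol\theta)|^2$, proves that their Fourier transforms decay like $|y|^{-N}$ -- this is where $N\geq 3$ enters, not the tangent-vector independence you invoke; for $N=2$ one instead takes logarithms, which is Theorem \ref{theorem:main1} -- deduces an absolutely continuous limit distribution with density $H_\sigma(x)$ continuous in $(\sigma,x)$, and identifies $\varphi''_{L-c}(\sigma)=2\pi H_\sigma(c)$ for the Jensen function. Theorem 31 of Jessen--Tornehave then converts positivity of $\varphi''$ near $\beta_0$ into a positive density of zeros with nearby real parts, and the degenerate case $\varphi''_{L-c}(\beta_0)=0$ -- your boundary case -- is handled by a separate argument: if $\varphi''$ vanished identically on an interval on each side of $\beta_0$ there would be no zeros in the corresponding strips, which is incompatible, via almost periodicity propagating the given zero at $\beta_0$, with the zero-counting formula; hence $\varphi''>0$ at points accumulating at $\beta_0$, and each such point is a limit of real parts of zeros. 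You would need some substitute for this boundary mechanism before your approach could yield the full statement.
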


\begin{remark*}
Note that orthogonality is necessary in Theorems \ref{theorem:main1} and \ref{theorem:main2} as it is shown by the following simple example
$$(1-2^{-s})\zeta(s)-\frac34\zeta(s)=\left(\frac{1}{4}-\frac{1}{2^s}\right)\zeta(s),$$
which clearly vanishes, in the half-plane of absolute convergence $\sigma>1$, only on the vertical line $\sigma=2$.
\end{remark*}

\begin{remark*}
There are some differences between the axioms that in \cite{righetti} define the class $\mathcal{E}$ and the above axioms \ref{hp:DS}, \ref{hp:EP} and \ref{hp:RC}, so that in principle we cannot say that the results that we obtained in \cite{righetti} may be applied here or \emph{viceversa}. In particular, for a Dirichlet series $L(s)$ as in Theorems \ref{theorem:main1} and \ref{theorem:main2} we don't know if $\sigma^*(L)>1$. However most of the known families of $L$-functions satisfy, or are supposed to satisfy, both the axioms of $\mathcal{E}$ and \ref{hp:DS}, \ref{hp:EP} and \ref{hp:RC}.
\end{remark*}

From Theorems \ref{theorem:main1} and \ref{theorem:main2} we obtain the following simple consequence.

\begin{corollary}\label{corollary:main_coro}
Let $L(s)$ be as in Theorems \ref{theorem:main1} or \ref{theorem:main2}. If $\sigma^*(L)>1$, then there exists $\eta>0$ such that the set
$$\{\beta\in[\sigma^*(L)-\eta,\sigma^*(L)]\mid \exists \gamma\hbox{ such that }L(\beta+i\gamma)=0\}$$
is dense in $[\sigma^*(L)-\eta,\sigma^*(L)]$.
\end{corollary}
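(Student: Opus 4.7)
The plan is to deduce the corollary by combining Theorem \ref{theorem:main1} or Theorem \ref{theorem:main2} with Theorem \ref{theorem:real_parts}. The main theorems rule out isolated vertical lines of zeros in $\sigma>1$, so Theorem \ref{theorem:real_parts} essentially reduces the structure of the zero set (projected to real parts) in any closed strip $[\alpha_1,\sigma^*(L)]$ with $1<\alpha_1<\sigma^*(L)$ to a closed set whose complement consists of finitely many open intervals (the zero-free strips), with no additional isolated points.

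First I would fix an auxiliary parameter, for instance $\alpha_1=(1+\sigma^*(L))/2$, and apply Theorem \ref{theorem:real_parts} to $L(s)$ in the strip $\alpha_1\leq \sigma\leq \sigma^*(L)$. This produces finitely many zero-free vertical strips in that region and (a priori) finitely many isolated vertical lines carrying zeros. Invoking Theorem \ref{theorem:main1} (in the case $N=2$) or Theorem \ref{theorem:main2} (in the case $N\geq 3$), the latter must be absent, so only finitely many zero-free strips remain. Order them and let $(\tau_m,\tau_m')$ denote the rightmost one (if it exists; otherwise any $\eta<\sigma^*(L)-\alpha_1$ will work).

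The key step is to verify that $\tau_m'<\sigma^*(L)$ strictly. Suppose not, i.e. $\tau_m'=\sigma^*(L)$. By definition of $\sigma^*(L)$ as the supremum of real parts of zeros, for every $\delta>0$ there is a zero with real part in $(\sigma^*(L)-\delta,\sigma^*(L)]$. Taking $\delta<\sigma^*(L)-\tau_m$, such a real part cannot lie in the zero-free strip $(\tau_m,\sigma^*(L))$, so it must equal $\sigma^*(L)$ exactly. But then $\sigma=\sigma^*(L)$ would be an isolated vertical line containing a zero in the half-plane $\sigma>1$, contradicting Theorem \ref{theorem:main1} or Theorem \ref{theorem:main2}. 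Hence $\tau_m'<\sigma^*(L)$, and I set $\eta:=\sigma^*(L)-\tau_m'>0$.

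It remains to observe that the set in the statement is dense in $[\sigma^*(L)-\eta,\sigma^*(L)]$. By construction $[\sigma^*(L)-\eta,\sigma^*(L)]$ meets no zero-free strip from the finite list, so every open subinterval of it contains the real part of some zero of $L(s)$. (Equivalently, pick any zero $\rho_0$ with $\mathrm{Re}\,\rho_0$ in this interval; since $\rho_0$ is not on an isolated vertical line by the main theorems, the second part of Theorem \ref{theorem:real_parts} produces a dense subinterval through it, and these subintervals tile $[\sigma^*(L)-\eta,\sigma^*(L)]$ since no zero-free strip intervenes.) The only delicate point is the one above, namely ruling out that the rightmost zero-free strip abuts $\sigma^*(L)$; every other ingredient is immediate from the cited theorems.
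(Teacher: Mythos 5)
Your proposal is correct and takes essentially the paper's approach: combine Theorem \ref{theorem:real_parts} with Theorems \ref{theorem:main1} and \ref{theorem:main2} to exclude isolated vertical lines of zeros and exploit the finiteness of zero-free strips near $\sigma^*(L)$ to choose $\eta$. The only difference is organizational: the paper splits into cases according to whether $\sigma^*(L)$ is itself the real part of a zero (using the second statement of Theorem \ref{theorem:real_parts} in that case), while you give a unified argument via the rightmost zero-free strip, ruling out that it abuts $\sigma^*(L)$ by the isolated-line contradiction; both are valid.
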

\begin{proof}
If $\sigma^*=\sigma^*(L)$ is itself the real part of a zero, the result follows immediately from the second part of Theorem \ref{theorem:real_parts} and Theorems \ref{theorem:main1} and \ref{theorem:main2}, choosing $\eta=\sigma^*-\sigma_1>0$ (while $\sigma_2=\sigma^*$ by definition). Suppose otherwise that $\sigma^*$ is not the real part of a zero. Then by definition, for every $\eps>0$ there exist $\beta_\eps\in(\sigma^*-\eps,\sigma^*)$ and $\gamma_\eps\in\R$ such that $L(\sigma_\eps+i\gamma_\eps)=0$. Hence $\sigma^*$ is the limit point of the real part of certain zeros of $L(s)$. Note that in general if $L(\sigma+it)\neq 0$, then either for any $\eps>0$ there exist $\beta_\eps$ with $|\sigma-\beta_\eps|<\eps$ and $\gamma_\eps\in\R$ such that $L(\beta_\eps+i\gamma_\eps)=0$, i.e. $\sigma$ is the limit point of the real part of certain zeros of $L(s)$, or there exists an open interval $(\sigma-\delta,\sigma+\delta)$, for some $\delta>0$, which does not contain any real part of the zeros. Since by Theorem \ref{theorem:real_parts} the number of such intervals is finite for $\sigma\in(\sigma^*-\eps,\sigma^*)$ for every $\eps>0$, we can take $\eta=\eps$ small enough so that there are none.
\end{proof}

By Theorems \ref{theorem:one_hole} and \ref{theorem:holes} we see that Theorems \ref{theorem:main1} and \ref{theorem:main2} are optimal, in the sense that without conditions on the coefficients $\bb{c}$ we cannot expect stronger results on the density of the real parts of the zeros. As an example we see in Figure \ref{fig:DH_taumeno} that the real parts of the zeros of
$$f(s,\tau)=\frac{1}{2}\left[(1-i\tau)L(s,\chi_1) + (1+i\tau)L(s,\conj{\chi_1})\right], \qquad \tau=-\frac{1+\sqrt{5}}{2}-\sqrt{1+\left(\frac{1+\sqrt{5}}{2}\right)^2},$$
are dense up to $\sigma^* = 2.3822861089\ldots$, as we have proved in our Ph.D. thesis \cite[p. 66]{righettiphd}. On the other hand, we see that the real parts of the zeros of $L(s,\chi_1)-cL(s,\conj{\chi_1})$, where
$$c = \frac{L(8,\chi_1)}{L(8,\conj{\chi_1})}= 0.99997181\ldots + i0.00750790\ldots,$$
are dense close to $\sigma=1$, as we know from work of Saias and Weingartner \cite{saias} (cf. \cite[Corollary 1]{righetti}), there are no zeros with real part in the interval $[2,7]$, but $s=8$ is a zero.
\begin{figure}[thb]
	\centering
		\includegraphics[width=14cm]{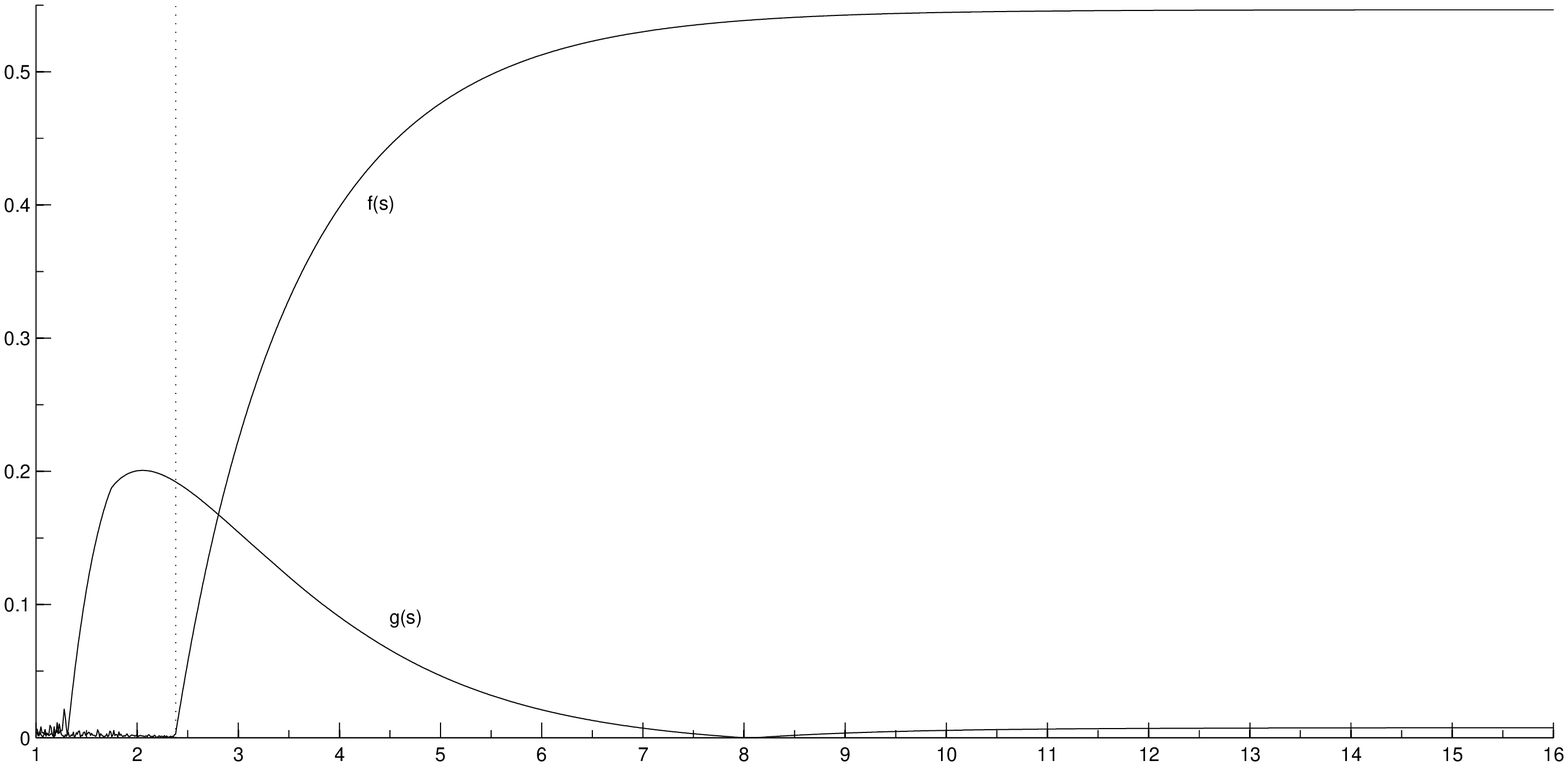}
	\caption{Approximate plot of $f(\sigma)=\min\limits_{t}\abs{\frac{L(\sigma+it,\chi_1)}{L(\sigma+it,\conj{\chi_1})}+\frac{1+i\tau}{1-i\tau}}$ and of $g(\sigma)=\min\limits_{t}\abs{\frac{L(\sigma+it,\chi_1)}{L(\sigma+it,\conj{\chi_1})}-\frac{L(8,\chi_1)}{L(8,\conj{\chi_1})}}$, where $\sigma\in[1.01, 16.01]$ and $t\in[0,2000]$ with step 0.01.}
	\label{fig:DH_taumeno}
\end{figure}

Note that the above function $f(s,\tau)$ is of the Davenport--Heilbronn type studied by Bombieri and Ghosh \cite{bombierighosh}. As we already remarked, Bombieri and Ghosh do not say whether these Davenport--Heilbronn type functions do have the property that the real parts of their zeros are dense in $[1,\sigma^*]$. However, in \cite[Theorem 4.1.3]{righettiphd} we gave necessary and sufficient conditions on the coefficients of these Davenport--Heilbronn type functions for this to happen, namely
\begin{theorem}
Suppose that $\xi\in\R$, $q$ is a positive integer and $\chi_0$ is the principal character mod $q$. Then there exists $\xi_{\rm max}(q)$, such that the real parts of the zeros for $\sigma>1$ of
$$f(s,\xi,q)=\frac12[(1-i\xi)L(s,\chi_1\chi_0)+(1+i\xi)L(s,\conj{\chi_1}\chi_0)]$$
are dense in the interval $[1,\sigma^*(\xi,q)]$ if and only if $|\xi|\leq \xi_{\rm max}(q)$. In particular, if $6\nondiv q$, then it is sufficient to take $|\xi|\leq 6.5851599$.
\end{theorem}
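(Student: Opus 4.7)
The plan is to rewrite the zero condition (valid for $\sigma>1$, where both factors are non-vanishing) as an equation on the ratio
\[
R(s) := \frac{L(s,\conj{\chi_1}\chi_0)}{L(s,\chi_1\chi_0)} = -\frac{1-i\xi}{1+i\xi} =: c(\xi),
\]
with $|c(\xi)|=1$, and $c(\xi)$ traces the unit circle minus $\{1\}$ as $\xi$ runs through $\R$. Expanding the Euler products, the local factors at primes $p$ with $\chi_1(p)=\pm 1$ and at primes $p\mid q$ cancel, leaving a product only over $P_{\pm} = \{p\nmid q : \chi_1(p) = \pm i\}$:
\[
R(s) = \prod_{p\in P_{+}}\frac{1-ip^{-s}}{1+ip^{-s}} \cdot \prod_{p\in P_{-}}\frac{1+ip^{-s}}{1-ip^{-s}}.
\]
A direct check using the prime number theorem in arithmetic progressions shows that $L(s,\chi_1\chi_0)$ and $L(s,\conj{\chi_1}\chi_0)$ are orthogonal in the sense of Definition~\ref{definition:orthogonality}, so Theorems~\ref{theorem:main1} and \ref{theorem:real_parts} imply that the set of real parts of zeros of $f(\cdot,\xi,q)$ in $(1,\sigma^*(\xi,q)]$ is a finite union of closed intervals; density in $[1,\sigma^*(\xi,q)]$ is therefore equivalent to the absence of any zero-free vertical sub-strip in $(1,\sigma^*(\xi,q))$.

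For each fixed $\sigma>1$ let $\Sigma(\sigma)\subset\C\setminus\{0\}$ denote the closure of $\{R(\sigma+it) : t\in\R\}$. Since $\{\log p\}_p$ is $\mathbb{Q}$-linearly independent, Kronecker's equidistribution theorem (combined with the uniform convergence of the product for $\sigma\geq\sigma_1>1$, which permits truncation to a finite set of primes) identifies $\Sigma(\sigma)$ with the image of the compact torus $(\mathbb{R}/2\pi\mathbb{Z})^{P_+\cup P_-}$ under the map obtained from the formula for $R$ by replacing $p^{-it}$ with $e^{-i\theta_p}$. Consequently, density of the real parts of zeros in $[1,\sigma^*(\xi,q)]$ becomes equivalent to the Bohr-type condition $c(\xi) \in \Sigma(\sigma)$ for every $\sigma \in (1,\sigma^*(\xi,q)]$.

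The heart of the argument is a geometric analysis of how $\Sigma(\sigma)$ evolves with $\sigma$. Each individual factor approaches $1$ as $\sigma\to\infty$, so $\Sigma(\sigma)\to\{1\}$; as $\sigma\to 1^+$ the set expands to cover a large neighbourhood of the unit circle. For intermediate $\sigma$ the set $\Sigma(\sigma)$ is an annulus-like region which \emph{surrounds} but need not contain the point $1$, whereas $c(\xi)$ lies on the unit circle and approaches $1$ precisely as $|\xi|\to\infty$. One defines $\xi_{\rm max}(q)$ as the supremum of $|\xi|$ for which $c(\xi)\in\Sigma(\sigma)$ for every $\sigma\in(1,\sigma^*(\xi,q)]$; compactness and continuous dependence on $\sigma$ guarantee that this supremum is attained. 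Sufficiency ($|\xi|\leq\xi_{\rm max}(q)$ implies density) is then immediate from the Bohr condition; necessity follows from openness: if $c(\xi)\notin\Sigma(\sigma_0)$ for some $\sigma_0$ then $c(\xi)\notin\Sigma(\sigma)$ on an entire open interval around $\sigma_0$, producing a zero-free vertical sub-strip.

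Finally, the explicit lower bound $\xi_{\rm max}(q)\geq 6.5851599$ when $6\nmid q$ comes from an explicit computation. In this case both $2\in P_+$ and $3\in P_-$, and the two-prime truncation of the product for $R$ parametrises an explicit surface in $\C\setminus\{0\}$ as $(\theta_2,\theta_3)\in[0,2\pi)^2$; the remaining primes in $P_\pm$ produce a perturbation which, together with a standard tail estimate for $\sigma>1$, can be shown to only enlarge $\Sigma(\sigma)$ in the relevant direction. A numerical optimization on the two-prime model then yields the stated numerical value. The main obstacle is precisely this last geometric step: establishing that the variation of $\Sigma(\sigma)$ with $\sigma$ is monotone enough that $\xi_{\rm max}(q)$ is a sharp threshold (rather than the lower endpoint of a more complicated admissible set of $|\xi|$), and that the higher-prime contributions always act in the favourable direction.
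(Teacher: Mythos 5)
Your reduction (pass to the ratio $L(s,\overline{\chi_1}\chi_0)/L(s,\chi_1\chi_0)$, cancel the real local factors, identify the closure $\Sigma(\sigma)$ of the values at fixed $\sigma$ via Kronecker, and translate density of the real parts of zeros into the condition $c(\xi)\in\Sigma(\sigma)$ for all $\sigma\in(1,\sigma^*(\xi,q)]$) is exactly the first half of the argument this theorem rests on, namely the Bohr--Jessen type analysis in Bombieri and Ghosh \cite{bombierighosh}. But the substance of the theorem is what comes after, and there your proposal has a genuine gap which you in fact flag yourself. The statement asserts two quantitative things: (i) the set of admissible $|\xi|$ is exactly an interval $[0,\xi_{\rm max}(q)]$, and (ii) an explicit admissible value $6.5851599$ when $6\nmid q$. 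Defining $\xi_{\rm max}(q)$ as a supremum and invoking ``compactness and continuous dependence'' yields neither. The region $\Sigma(\sigma)$ is an annular set whose inner border (the ``hole'' around $1$) is \emph{not} monotone in $\sigma$: it is absent for $\sigma$ large and for $\sigma$ near $1$, and opens up in between, while $\sigma^*(\xi,q)$ itself moves with $\xi$; so there is no soft argument showing that if $c(\xi)$ avoids the hole for all $\sigma\le\sigma^*(\xi,q)$ then so does $c(\xi')$ for every $|\xi'|<|\xi|$. This is precisely the step the paper supplies by continuing the proof of \cite[Theorem 7]{bombierighosh} with Kershner's Theorems II--III \cite{kershner1}, which give quantitative estimates for the support function of the \emph{inner} border of a sum of convex curves: those estimates bound the maximal angular extent of the hole uniformly in $\sigma>1$, turn the membership condition into an explicit inequality on $|\arg c(\xi)|=|\pi-2\arctan\xi|$ (which \emph{is} monotone in $|\xi|$, giving the sharp threshold), and produce the numerical constant. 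Your ``two-prime truncation plus a tail that only enlarges $\Sigma(\sigma)$ in the favourable direction'' is exactly the delicate point: the inner border of a Minkowski sum of closed curves is not controlled by naive tail estimates, and you give no substitute for Kershner's inequalities, so neither the threshold character of $\xi_{\rm max}(q)$ nor the value $6.5851599$ is established.

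Two smaller points. First, $6\nmid q$ does not mean that both $2\nmid q$ and $3\nmid q$ (that would be $(q,6)=1$); it only guarantees that at least one of $2,3$ survives in $P_+\cup P_-$, so your proposed explicit computation ``with $2\in P_+$ and $3\in P_-$'' does not match the hypothesis of the statement and the worst case over such $q$ must be handled. Second, the equivalence ``density in $[1,\sigma^*]$ iff $c(\xi)\in\Sigma(\sigma)$ for all $\sigma$'' needs a word about boundary points of $\Sigma(\sigma)$ (via the Jensen function/Jessen--Tornehave machinery, as in Theorems~\ref{theorem:real_parts} and \ref{theorem:main1}); mere membership in the closure is not by itself the criterion used to produce zeros near a given abscissa.
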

\begin{proof} The proof is a continuation of the proof of Bombieri and Ghosh \cite[Theorem 7]{bombierighosh} using results of Kershner \cite[Theorems II--III]{kershner1} on the support function of the inner border of the sum of convex curves.
\end{proof}

Theorems \ref{theorem:main1} and \ref{theorem:main2} are obtained by suitably adapting the works of Bohr and Jessen \cite{bohrjessen1,bohrjessen2}, Wintner and Jessen \cite{jessenwintner}, Jessen and Tornehave \cite{jessentornehave}, Borchsenius and Jessen \cite{borchseniusjessen}, and Lee \cite{lee} on the value distribution of Dirichlet series. The proofs will be given in Sections \ref{section:proof_main1} and \ref{section:proof_main2} respectively.

Note that in Theorems \ref{theorem:main1} and \ref{theorem:main2} we don't ask for a functional equation or meromorphic continuation to the whole complex plane. However, in the concrete case these are always known to hold, so one might ask what happens if one adds these conditions. On account of this we show that Theorem \ref{theorem:one_hole} may be generalized so that the resulting Dirichlet series is an $L$-function with functional equation and, of course, without Euler product. We hence consider functions $F(s)$ satisfying \ref{hp:DS} and
\begin{enumerate}[(I)]\setcounter{enumi}{3}
\item\label{hp:AC} $(s-1)^m F(s)$ is an analytic continuation as an entire function of finite order for some $m\geq 0$;
\item\label{hp:FE} $F(s)$ satisfies a functional equations of the form $\Phi(s)=\omega \conj{\Phi(1-\conj{s})}$, where $|\omega|=1$ and
\begin{equation*} 
\Phi(s) = Q^s \prod_{j=1}^r\Gamma\left(\lambda_j s+\mu_j\right)F(s) = \gamma(s)F(s),
\end{equation*}
say, with $r\geq 0$, $Q>0$, $\lambda_j>0$ and $\re\mu_j\geq 0$; 
\end{enumerate}
although such requirements can actually be relaxed. 
\begin{theorem}\label{theorem:func_eq_hole}
Let $N\geq 3$ be an integer, $(r,Q,\bb{\lambda},\bb{\mu})$ fixed parameters, and $F_1(s),\ldots,F_N(s)$ functions satisfying \ref{hp:DS}, \ref{hp:EP}, \ref{hp:AC} and \ref{hp:FE} for some $|\omega_j|= 1$, $j=1,\ldots,N$. Suppose furthermore that $\omega_h\neq \omega_k$ for some $h,k\in\{1,\ldots,N\}$. Then there exist infinitely many $\bb{c}\in\C^N$ such that $L_{\bb{c}}(s)=\sum_{j=1}^N c_jF_j(s)$ satisfies \ref{hp:AC}, \ref{hp:FE} and has no zeros in some vertical strip $\sigma_1<\sigma<\sigma_2$ with $1<\sigma_1<\sigma_2<\sigma^*(L_{\bb{c}})$.
\end{theorem}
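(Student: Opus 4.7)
The strategy is to read the requirement that $L_{\bb{c}}$ inherit a functional equation of type \ref{hp:FE} (with the same $\gamma(s)$) as a real-linear constraint on $\bb{c}\in\C^N$, and then to execute the construction behind Theorem~\ref{theorem:one_hole} inside this constrained subspace. Substituting $L_{\bb{c}}(s)=\sum_j c_j F_j(s)$ into $\Phi(s)=\omega\,\overline{\Phi(1-\overline{s})}$ and invoking each individual $\Phi_j(s)=\omega_j\,\overline{\Phi_j(1-\overline{s})}$, the linear independence of the $\overline{\Phi_j(1-\overline{s})}$ forces
$$c_j\,\omega_j=\omega\,\overline{c_j},\qquad j=1,\ldots,N,$$
for the root number $|\omega|=1$ of $L_{\bb{c}}$. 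Writing $\omega_j=e^{i\phi_j}$ and using that $\bb{c}\mapsto e^{i\mu}\bb{c}$ leaves zeros invariant while rotating $\omega$ to $e^{2i\mu}\omega$, we may normalise $\omega=1$. The admissible $\bb{c}$'s then sweep out the real $N$-dimensional subspace $V=\{(t_j e^{-i\phi_j/2})_j:\bb{t}\in\R^N\}\subset\C^N$, and setting $\tilde F_j(s):=e^{-i\phi_j/2}F_j(s)$ the problem reduces to producing $\bb{t}\in\R^N\setminus\{\bb{0}\}$ such that $L_{\bb{t}}(s):=\sum_j t_j\tilde F_j(s)$ has a zero-free strip $1<\sigma_1<\sigma_2<\sigma^*(L_{\bb{t}})$; axiom \ref{hp:AC} for $L_{\bb{t}}$ is automatic from that of the $F_j$'s.

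By the remark following Theorem~\ref{theorem:one_hole}, its proof realises admissible coefficients in a cone $C\subset\C^N$, namely the $\C^*$-sweep of the intersection of an open ball with a complex hyperplane $H=\{\bb{c}:L_{\bb{c}}(s_0)=0\}$ for a suitable $s_0$ with $\re s_0>1$; the ball encodes quantitative dominance estimates producing the zero-free strip. Restricted to $V$, the condition $L_{\bb{t}}(s_0)=0$ becomes two real linear equations in $\bb{t}\in\R^N$, with solution subspace $W_0\subset V$ of real dimension $\geq N-2\geq1$ (since $N\geq3$); the open-ball condition then translates into an open condition on $W_0$. It therefore suffices to exhibit one $\bb{t}^{(0)}\in V\cap C$, since its real scalar multiples already provide infinitely many admissible $\bb{c}$.

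The principal obstacle is verifying that $V\cap C\neq\emptyset$, and this is precisely where the hypothesis $\omega_h\neq\omega_k$ enters. If all $\omega_j$ were equal, $V$ would reduce, up to a global unit factor, to the standard real subspace $\R^N\subset\C^N$, and the cone $C$---whose defining open ball may be centred at a genuinely complex point of $H$---could a priori miss it entirely; consistently, in that degenerate case the hypothesis of the theorem is not satisfied. With $\omega_h\neq\omega_k$, two of the phases $e^{-i\phi_j/2}$ are non-trivially distinct, so $V$ is a genuinely \emph{twisted} real subspace of $\C^N$, and a transversality count in $\R^{2N}$ (real dimensions $N$ for $V$, $2N-2$ for $H$, $2N$ for the ambient) predicts $V\cap H$ of real dimension at least $N-2$. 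The main technical work is then to choose $s_0$ and the centre of the defining ball so that this transversality is actually realised, and so that the dominance estimates hold on an open subset of $W_0$, producing the desired $\bb{t}^{(0)}$ and thereby completing the proof.
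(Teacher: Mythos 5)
Your reduction is the same as the paper's: coefficients of the form $c_j=\conj{\alpha_j}t_j$ with $\alpha_j^2=\omega_j$ and $t_j\in\R$ automatically make $L_{\bb{c}}$ satisfy \ref{hp:FE} with $\omega=1$ (and \ref{hp:AC} is inherited), so one must run the Theorem \ref{theorem:one_hole} perturbation argument inside this real $N$-dimensional subspace $V\subset\C^N$. Note that only this sufficiency direction is needed; your claim that linear independence of the $\conj{\Phi_j(1-\conj{s})}$ \emph{forces} $c_j\omega_j=\omega\conj{c_j}$ is neither assumed nor required. The genuine gap is that everything after the reduction is a plan rather than a proof: you must exhibit $\bb{c}_0\in V\setminus\{\bb0\}$ with $|L_{\bb{c}_0}|$ bounded below on a strip $[\sigma_1,\sigma_2]$ lying above the relevant $\sigma^*$'s, and then points of $V$ arbitrarily close to $\bb{c}_0$ at which $L$ vanishes at some $\beta>\sigma_2$; you explicitly defer this ("the main technical work is then to choose $s_0$ and the centre of the defining ball\ldots"), which is precisely the content of the theorem. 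Moreover the difficulty is quantitative, not the transversality count you invoke: for any fixed $\beta$ the set $V\cap\{\bb{x}: L_{\bb{x}}(\beta)=0\}$ has real dimension at least $N-2\geq 1$ whether or not the $\omega_j$ are distinct, so $V\cap H$ being nonempty is never the issue; what is needed is that its distance to $\bb{c}_0$ tends to $0$ as $\beta\to\infty$, so that the perturbation does not destroy the lower bound on $[\sigma_1,\sigma_2]$.

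This is also where the hypothesis $\omega_h\neq\omega_k$ actually enters, and your heuristic misses the mechanism. In the paper one takes $\bb{c}_0$ from a real solution of the ``$\beta=\infty$'' system $\sum_j \alpha_j t_j=0$ (using $a_j(1)=1$, which follows from \ref{hp:EP}), and the point of $\omega_1\neq\omega_2$ (i.e. $\alpha_1\neq\pm\alpha_2$, with $\alpha_1\notin\R$) is that the $2\times 2$ real matrix with rows $(\re\alpha_1,\re\alpha_2)$ and $(\im\alpha_1,\im\alpha_2)$ is invertible; hence both the limit system and the systems $\sum_j t_j\alpha_jF_j(\sigma)=0$ for large $\sigma$ can be solved for $(t_1,t_2)$ in terms of the free parameters $(t_3,\ldots,t_N)$, and with the parameters held fixed the solutions $v_\sigma$ converge to $v_\infty$ as $\sigma\to\infty$, which is exactly what makes the correction inside $V$ as small as desired. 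If all $\omega_j$ were equal, the limiting $2\times 2$ minor is singular: one of the two real constraints at $\beta$ has a gradient on $V$ that tends to $0$, the parametrized solution spaces need not converge, and the distance from $\bb{c}_0$ to the zero set inside $V$ need not shrink --- so the hypothesis is not about $V$ being ``twisted enough'' to meet a pre-existing cone from Theorem \ref{theorem:one_hole}, but about keeping the $\sigma\to\infty$ degeneration nonsingular so that the approximation argument survives the restriction to $V$. Without carrying out this step your argument does not yet prove the theorem.
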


To give a concrete example of the above result, we fix an integer $q\geq 7$, square-free, $(q,6)=1$ and $q\not\equiv 2\smod{4}$, and consider the Dirichlet $L$-functions associated to distinct primitive characters $\chi\smod{q}$. Then, we know that there are $\varphi^*(q) = \prod_{p|q} (p-2)$ distinct primitive characters $\chi\smod{q}$ and at least half of them have the same \emph{parity}, i.e. the sign of $\chi(-1)$. We denote with $\mathcal{W}(q)$ the set of such characters and we have that $|\mathcal{W}(q)|\geq 3$. As a consequence of Theorem 1 of Kaczorowski, Molteni and Perelli \cite{kmp4}, we have that $\omega_{\chi_1}\neq \omega_{\chi_2}$ if $\chi_1\neq \chi_2$ for $\chi_1,\chi_2\in\mathcal{W}(q)$, so we may apply Theorem \ref{theorem:func_eq_hole} to the Dirichlet $L$-functions associated to distinct characters of $\mathcal{W}(q)$.

On the other hand, we mention that Bombieri and Hejhal \cite{bombierihejhal} have shown that, under the Generalized Riemann Hypothesis and a weak pair correlation of the zeros, linear combinations with real coefficients of Euler products with the same functional equation have asymptotically almost all of their zeros on the line $\sigma=\frac{1}{2}$. For $\frac{1}{2}<\sigma<1$, it is known that \emph{joint universality} of a $N$-uple of $L$-functions implies that the real parts of the zeros of any linear combination of these $L$-functions are dense in $(\frac{1}{2},1)$ (see e.g. Bombieri and Ghosh \cite[p. 230]{bombierighosh}). Joint universality is known to hold for many families of $L$-functions and recently Lee, Nakamura and Pa\'nkowski \cite{lnp} have shown that this property holds in an axiomatic setting such as the Selberg class under a strong Selberg orthonormality conjecture.

To give concrete examples of families of $L$-functions satisfying the properties required by Theorems \ref{theorem:main1} and \ref{theorem:main2} we refer to \cite{righetti} for Artin $L$-functions, automorphic $L$-functions and the Selberg class. Here we only recall that the relevant analytic properties of the automorphic $L$-functions and their orthogonality can be found in the papers of Rudnick and Sarnak \cite{rudnick}, Iwaniec and Sarnak \cite{iwaniec}, Bombieri and Hejhal \cite{bombierihejhal}, Kaczorowski and Perelli \cite{kaczper2}, Kaczorowski, Molteni and Perelli \cite{kmp2}, Liu and Ye \cite{liu2}, and Avdispahi\'{c} and Smajlovi\'{c} \cite{avdispahic}. Moreover, we refer to Selberg \cite{selberg} and the surveys of Kaczorowski \cite{kaczorowski}, Kaczorowski and Perelli \cite{kaczper1}, and Perelli \cite{perelli1} for a thorough discussion on the Selberg class.

\vspace{5pt}For the computations we have used the software packages PARI/GP \cite{pari} and MATLAB$^\circledR$. These were made by truncating the Dirichlet series to the first $70\,000$ terms, which guarantees accuracy to eight decimal places for the values given above.

\begin{acknowledgements}
This paper is part of my Ph.D. thesis at the Department of Mathematics of the University of Genova. I express my sincere gratitude to my supervisor Professor Alberto Perelli for his support and for the many valuable suggestions he gave me. I thank Professor Giuseppe Molteni for carefully reading the manuscript and suggesting several improvements. I also wish to thank Professor Enrico Bombieri for an enlightening discussion on this topic and for suggestions concerning the paper.
\end{acknowledgements}
\section{Radii of convexity of power series}

Let $F(s)$ be a function satisfying \ref{hp:DS} and \ref{hp:EP}. Then we can write $F(s)$ as a absolutely convergent Euler product $F(s)=\prod_p F_p(s)$ for $\sigma>1$, where the local factor $F_p(s)$ is determined by $\log F_p(s)=\sum_{k=1}^\infty\frac{b_F(p^k)}{p^{ks}}$. Then, in most of the results on the value distribution of $F(s)$ for some fixed $\sigma$, a fundamental ingredient is the convexity of the curves $\log F_p(\sigma+it)$, $t\in\R$, at least for infinitely many primes $p$. In this section we collect and prove some results on this matter which will be needed later.

Let $\mathcal{A}$ be the class of functions $f(z)=z+\sum_{n=2}^\infty b(n)z^n$ which are regular on $D=\{|z|<1\}$. Let $\mathcal{F}$ be any subclass of $\mathcal{A}$, then we write $r_c(\mathcal{F})$ for the largest $r$, with $0<r\leq 1$, such that $f(\{|z|<r\})$ is convex. In \cite{yamashita}, Yamashita proved the following result.

\begin{proposition}[Yamashita {\cite[Theorem 2]{yamashita}}]\label{proposition:yamashita} Let $\mathcal{B}=\{f\in\mathcal{A}\mid |b(n)|\leq n,\: n\geq 2\}$. Then $r_c(B)\geq R_1$, where $R_1$ is the smallest real root in $(0,1)$ of $2(1-X)^4=1+4X+X^2$.\\
Let $K>0$ and $\mathcal{G}(K)=\{f\in\mathcal{A}\mid |b(n)|\leq K,\: n\geq 2\}$. Then $r_c(\mathcal{G}(K))\geq R_2(K)$, where $R_2(K)$ is the smallest real root in $(0,1)$ of $X^3-3X^2+4X=(1-X)^3/K$.
\end{proposition}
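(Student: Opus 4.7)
The plan is to apply the classical sufficient criterion for convexity: if $f\in\mathcal{A}$ satisfies $\re(1+zf''(z)/f'(z))>0$ on $\{|z|<r\}$, then $f$ maps this disk onto a convex domain; in particular the stronger inequality $|zf''(z)/f'(z)|<1$ already suffices. My strategy is to bound $|zf''(z)|$ from above and $|f'(z)|$ from below by the triangle inequality, using the coefficient constraint defining each subclass, and then identify the largest $r$ for which the resulting sufficient inequality survives.

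For $\mathcal{G}(K)$, the hypothesis $|b(n)|\leq K$ for $n\geq 2$ yields, for $|z|=r<1$,
\begin{equation*}
|f'(z)|\geq 1-K\sum_{n\geq 2}nr^{n-1}=1-\frac{Kr(2-r)}{(1-r)^2},\qquad |zf''(z)|\leq K\sum_{n\geq 2}n(n-1)r^{n-1}=\frac{2Kr}{(1-r)^3}.
\end{equation*}
Imposing $|zf''(z)|<|f'(z)|$, clearing the common factor $(1-r)^3$ and collecting terms, I obtain $K(r^3-3r^2+4r)<(1-r)^3$. This holds at $r=0$, and its boundary is precisely the equation $X^3-3X^2+4X=(1-X)^3/K$ defining $R_2(K)$, so $r_c(\mathcal{G}(K))\geq R_2(K)$.

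For $\mathcal{B}$, the stronger bound $|b(n)|\leq n$ gives
\begin{equation*}
|f'(z)|\geq 1-\sum_{n\geq 2}n^2r^{n-1}=1-\frac{r(r^2-3r+4)}{(1-r)^3},\qquad |zf''(z)|\leq \sum_{n\geq 2}n^2(n-1)r^{n-1}=\frac{2r(r+2)}{(1-r)^4},
\end{equation*}
where the second sum is evaluated via the decomposition $n^2(n-1)=n(n-1)(n-2)+2n(n-1)$ applied to the standard derivatives of $1/(1-r)$. Requiring $|zf''(z)|<|f'(z)|$, bringing both sides over the common denominator $(1-r)^4$ and collecting, the inequality reduces to $2(1-r)^4>1+4r+r^2$; since the left-hand side dominates at $r=0$, the admissible radii are those smaller than $R_1$, the smallest positive root of $2(1-X)^4=1+4X+X^2$, and hence $r_c(\mathcal{B})\geq R_1$.

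The only substantive obstacle is the algebraic bookkeeping for the truncated series $\sum_{n\geq 2}n^k r^{n-1}$ and $\sum_{n\geq 2}n^{k-1}(n-1)r^{n-1}$: each is a standard geometric-series derivative with its $n=1$ contribution removed, and those truncation terms must be tracked carefully when clearing denominators. There is no conceptual difficulty beyond this, and the slack introduced by replacing the sharp condition $\re(1+zf''/f')>0$ by the cruder $|zf''/f'|<1$ is precisely why the statement gives the lower bounds $r_c\geq R_1$ and $r_c\geq R_2(K)$ rather than equalities.
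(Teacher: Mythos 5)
Your proof is correct, and the computations check out: the series evaluations, the clearing of denominators, and the resulting threshold equations $2(1-X)^4=1+4X+X^2$ and $X^3-3X^2+4X=(1-X)^3/K$ are all right, and the passage from $|zf''(z)/f'(z)|<1$ to convexity via $\re(1+zf''/f')>0$ is legitimate since your imposed inequality forces the lower bound on $|f'|$ to be positive, so $f'$ is zero-free on the disk and the classical convexity criterion applies. The route differs from the paper's in presentation rather than substance: the paper takes the statement from Yamashita and observes that it is a simple consequence of the Alexander--Remak criterion (Theorem \ref{theorem:AR}) applied to the rescaled function $h(z)=r^{-1}f(rz)$, i.e.\ one checks $\sum_{n\geq2}n^2|b(n)|r^{n-1}\leq 1$ and evaluates that single sum under each coefficient constraint, exactly the template used in the proof of Proposition \ref{proposition:radius_convexity}. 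Your argument re-derives that criterion inline: because $n(n-1)+n=n^2$, your sufficient condition $\sum_{n\geq2}n(n-1)|b(n)|r^{n-1}<1-\sum_{n\geq2}n|b(n)|r^{n-1}$ is literally the Alexander--Remak condition for the same rescaled function, which is why you land on identical polynomials. What your version buys is self-containedness (no appeal to Theorem \ref{theorem:AR}, only the standard $\re(1+zf''/f')>0$ test); what the paper's version buys is brevity and a scheme that extends verbatim to the class $\mathcal{H}(K)$. One cosmetic point: at $r=R_1$ or $r=R_2(K)$ your strict inequality can degenerate to equality, but convexity of $f(\{|z|<R\})$ still follows because the strict inequality holds at every interior point $|z|<R$ (equivalently, the image is an increasing union of convex sets), so the conclusions $r_c(\mathcal{B})\geq R_1$ and $r_c(\mathcal{G}(K))\geq R_2(K)$ stand as stated.
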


The proof of the above proposition is actually a simple consequence of the following result of Alexander and Remak (see Goodman \cite[Theorem 1]{goodman}).

\begin{theorem}[Alexander--Remak]\label{theorem:AR}
If $f(z)=z+\sum_{n=2}^\infty b(n)z^n\in\mathcal{A}$ and
$$\sum_{n=2}^\infty n^2|b(n)|\leq 1,$$
then $f(D)$ is convex.
\end{theorem}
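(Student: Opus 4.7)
The plan is to verify the classical analytic criterion for convexity: an analytic function $f:D\to\C$ with $f'(z)\neq 0$ on $D$ is convex univalent whenever $\re\bigl(1+zf''(z)/f'(z)\bigr)>0$ for every $z\in D$. So the task reduces to establishing this inequality under the coefficient bound $\sum_{n\geq 2} n^2|b(n)|\leq 1$.

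First I check that $f'$ does not vanish on $D$. Excluding the trivial case $f(z)=z$, for $|z|<1$ one has
\[
|f'(z)-1|\leq \sum_{n\geq 2} n|b(n)|\,|z|^{n-1} < \sum_{n\geq 2} n|b(n)| \leq \sum_{n\geq 2} n^2|b(n)| \leq 1,
\]
since $|z|^{n-1}<1$ for $n\geq 2$. Hence $\re(f'(z))>0$ throughout $D$, so $f'$ is nonvanishing and the quotient $zf''/f'$ is well defined.

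For the main inequality, set $P(z)=f'(z)+zf''(z)=(zf'(z))'=1+\sum_{n\geq 2} n^2 b(n)z^{n-1}$ and $Q(z)=f'(z)=1+\sum_{n\geq 2} n\,b(n)z^{n-1}$, so that $1+zf''(z)/f'(z)=P(z)/Q(z)$. To obtain $\re(P/Q)>0$ it is enough to show $|P/Q-1|<1$. Since $P(z)-Q(z)=\sum_{n\geq 2} n(n-1)\,b(n)\,z^{n-1}$, the triangle inequality gives
\[
\left|\frac{P(z)}{Q(z)}-1\right|\leq \frac{\sum_{n\geq 2} n(n-1)|b(n)|\,|z|^{n-1}}{1-\sum_{n\geq 2} n|b(n)|\,|z|^{n-1}},
\]
and this is strictly less than $1$ if and only if $\sum_{n\geq 2} n^2|b(n)|\,|z|^{n-1}<1$, which follows at once from the hypothesis for $|z|<1$.

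The argument is essentially routine; the content of the theorem lies in the observation that the single bound $\sum n^2|b(n)|\leq 1$ simultaneously controls $|(zf'(z))'-1|$ and keeps $|f'(z)|$ away from zero, so that there is no real obstacle. An alternative, in the spirit of Goodman's monograph, would be to invoke Alexander's duality, namely that $f$ is convex if and only if $zf'(z)$ is starlike, and then apply the classical starlikeness criterion $\sum n|c_n|\leq 1$ for $z+\sum_{n\geq 2} c_n z^n$; with $c_n=n\,b(n)$ this matches the hypothesis exactly.
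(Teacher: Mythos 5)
Your proof is correct. Note that the paper itself does not prove this statement at all: it quotes the Alexander--Remak theorem as a known result, with a pointer to Goodman, and only uses it as a black box in the proof of Proposition \ref{proposition:radius_convexity}. Your argument is a complete, self-contained verification: you first show $|f'(z)-1|<1$ on $D$ (so $f'$ is zero-free and the quotient $zf''/f'$ is defined), then write $1+zf''/f'=P/Q$ with $P=(zf')'$, $Q=f'$, and check $|P/Q-1|<1$, which reduces exactly to $\sum_{n\geq 2} n^2|b(n)|\,|z|^{n-1}<1$; this gives $\re\bigl(1+zf''(z)/f'(z)\bigr)>0$ and hence convex univalence by the standard criterion. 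The steps are all sound, including the implicit use of the positivity of the denominator $1-\sum n|b(n)||z|^{n-1}$ in the equivalence, and the trivial case $f(z)=z$ is handled. The only dependence is on the classical convexity criterion (equivalently, via your alternative remark, Alexander's duality $f$ convex $\Leftrightarrow$ $zf'$ starlike plus the coefficient criterion $\sum n|c_n|\leq 1$ for starlikeness), which is standard and is essentially the route taken in the literature the paper cites; so your proposal supplies the proof the paper delegates to Goodman, and either of your two routes is acceptable.
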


Adapting Yamashita's proof (cf. \cite[\S2]{yamashita}) we obtain the following.

\begin{proposition}\label{proposition:radius_convexity} Let $K>0$ and $\mathcal{H}(K) = \{f\in\mathcal{A}\mid |b(n)|\leq Kn^2,\: n\geq 2\}$. Then $r_c(\mathcal{H}(K))\geq R_3(K)$, where $R_3(K)$ is the smallest root in $(0,1)$ of $X^5-5X^4+11X^3+X^2+16X=(1-X)^5/K$.
\end{proposition}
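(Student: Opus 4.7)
The plan is to mimic Yamashita's rescaling argument and reduce to the Alexander--Remak criterion. Given $f\in\mathcal{H}(K)$ and $0<r<1$, consider the rescaled function $g(z)=f(rz)/r = z+\sum_{n=2}^\infty b(n)r^{n-1}z^n$, which lies in $\mathcal{A}$. Since $f(\{|z|<r\})=r\cdot g(D)$, convexity of $f(\{|z|<r\})$ is equivalent to convexity of $g(D)$, and by Theorem \ref{theorem:AR} it suffices to ensure that
\[
\sum_{n=2}^\infty n^2|b(n)|r^{n-1}\leq 1.
\]
Using the hypothesis $|b(n)|\leq Kn^2$, this is implied by $K\sum_{n=2}^\infty n^4 r^{n-1}\leq 1$, so the whole argument reduces to computing this series in closed form and solving for $r$.

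For the closed form I would start from the standard polylogarithm identity
\[
\sum_{n=1}^\infty n^4 r^{n}=\frac{r(1+11r+11r^2+r^3)}{(1-r)^5},
\]
which gives $\sum_{n=1}^\infty n^4 r^{n-1}=(1+11r+11r^2+r^3)/(1-r)^5$. Subtracting the $n=1$ term and expanding $(1-r)^5=1-5r+10r^2-10r^3+5r^4-r^5$, the numerator of $\sum_{n=2}^\infty n^4 r^{n-1}$ simplifies to $r^5-5r^4+11r^3+r^2+16r$, so the sufficient condition becomes
\[
K\,\frac{r^5-5r^4+11r^3+r^2+16r}{(1-r)^5}\leq 1,
\]
i.e.\ $r^5-5r^4+11r^3+r^2+16r\leq (1-r)^5/K$.

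To conclude I would observe that at $X=0$ the left side equals $0$ while the right side equals $1/K>0$, whereas at $X=1$ the left side is positive and the right side is $0$; moreover the left side is increasing and the right side is decreasing on $(0,1)$, so the equation $X^5-5X^4+11X^3+X^2+16X=(1-X)^5/K$ has a unique (hence smallest) root $R_3(K)\in(0,1)$, and the inequality holds for all $r\leq R_3(K)$. This gives $r_c(\mathcal{H}(K))\geq R_3(K)$.

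There is no real obstacle here; the only part requiring care is the routine but error-prone algebra of combining $1+11r+11r^2+r^3-(1-r)^5$ to obtain the quintic $r^5-5r^4+11r^3+r^2+16r$ with the correct signs, and then checking the monotonicity of both sides to guarantee uniqueness of the root in $(0,1)$.
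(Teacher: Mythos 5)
Your proof is correct and follows essentially the same route as the paper: rescale to $r^{-1}f(rz)$, invoke the Alexander--Remak criterion, bound $\sum_{n\ge 2} n^2|b(n)|r^{n-1}$ by $K\sum_{n\ge 2} n^4 r^{n-1}$, and use the closed form $\dfrac{r^5-5r^4+11r^3+r^2+16r}{(1-r)^5}$ together with the definition of $R_3(K)$. The uniqueness/monotonicity discussion at the end is harmless but unnecessary, since the series has positive coefficients and is therefore increasing in $r$, so $r\le R_3(K)$ already gives the required bound at the smallest root.
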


\begin{remark}\label{remark:radii}
Note that $R_3(K)$ is a strictly decreasing function of $K$, with
$$\sup_{K>0}R_3(K) = \lim_{K\rightarrow 0^+}R_3(K)=1\qquad\hbox{ and }\qquad\inf_{K>0}R_3(K) = \lim_{K\rightarrow +\infty}R_3(K)=0.$$
Moreover, for any $K>0$ we have $R_3(K)\leq R_2(K)$.
\end{remark}

\begin{proof}[Proof of Proposition \ref{proposition:radius_convexity}]
For $f(z)=z+\sum_{n=2}^\infty b(n)z^n\in\mathcal{H}(K)$ and any $r\leq R_3=R_3(K)$ we have 
$$\sum_{n=2}^\infty n^2|b(n)|r^{n-1}\leq K\sum_{n=2}^\infty n^4R_3^{n-1}=K\frac{R_3^5-5R_3^4+11R_3^3+R_3^2+16R_3}{(1-R_3)^5}=1,$$
where the last equality follows from the fact that is chosen $R_3$ as the smallest real root in $(0,1)$ of $X^5-5X^4+11X^3+X^2+16X=(1-X)^5/K$. Therefore we can apply Theorem \ref{theorem:AR} to $h(z)=r^{-1}f(rz)$, which is thus convex on $|z|<1$. Hence $f(\{|z|<r\})$ is convex for any $r\leq R_3$ and thus $R_3\leq r_c(\mathcal{H}(K))$.
\end{proof}

From this we obtain an explicit version of Theorem 13 of Jessen and Wintner \cite{jessenwintner} and Lemma 2.5 of Lee \cite{lee}.

\begin{proposition}\label{proposition:bound_trasf_fourier}
Let $N$ be a fixed positive integer, $G_j(z)=\sum_{n=1}^\infty a_j(n)z^n$, $j=1,\ldots,N$, and suppose there exist positive real numbers $\rho_j$ and $K_j$ such that $|a(n)|\leq K_j\rho_j^{1-n}$ for every $n\geq 2$. For any $\boldsymbol{y}=(y_1,\ldots,y_J)\in\C^N$, define 
$$g(r,\theta,\boldsymbol{y}) = \sum_{j=1}^N \re\left(G_j(r\exp{2\pi i\theta})\conj{y_j}\right),$$
where $0<r<\min_j \rho_j$ and $\theta\in[0,1]$. If $\sum_{j=1}^N \conj{y_j}a_j(1)\neq 0$, then there exists a positive constant $C$ such that for any $\delta>0$ we have
\begin{equation}\label{eq:bound_trasf_fourier}
\abs{\int_0^1 \exp{i g(r,\theta,\boldsymbol{y})}d\theta}\leq \frac{24}{\sqrt{C\delta r\norma{\boldsymbol{y}}}}
\end{equation}
for every $0<r\leq R_3\!\left(\frac{1}{\delta}\sqrt{\sum_j|K_j|^2}\right)\cdot\min_j\rho_j$ and every $\boldsymbol{y}$ such that $\abs{\sum_{j=1}^N \conj{y_j}a_j(1)}\geq \delta\norma{\boldsymbol{y}}>0$.  
\end{proposition}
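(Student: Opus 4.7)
The plan is to reduce \eqref{eq:bound_trasf_fourier} to a single one-variable oscillatory integral whose phase traces a convex closed curve, and then apply a van der Corput / stationary-phase estimate. Introduce
$$H(z) = \sum_{j=1}^N \conj{y_j}\, G_j(z) = \sum_{n=1}^\infty b(n) z^n, \qquad b(n)=\sum_{j=1}^N \conj{y_j}\, a_j(n),$$
so that $g(r,\theta,\boldsymbol{y}) = \re H(re^{2\pi i\theta})$ and the integral to estimate equals $\int_0^1 e^{i\re H(re^{2\pi i\theta})}d\theta$. Setting $\rho=\min_j\rho_j$, the normalized function $h(w) = H(\rho w)/(\rho b(1))$ belongs to the class $\mathcal{A}$ of Section 2, thanks to the standing hypothesis $|b(1)|\geq\delta\|\boldsymbol{y}\|>0$.

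The key input is an estimate on the Taylor coefficients of $h$. Using $|a_j(n)|\leq K_j\rho_j^{1-n}$, Cauchy--Schwarz, $|b(1)|\geq\delta\|\boldsymbol{y}\|$, and $\rho\leq\rho_j$, for every $n\geq 2$ I have
$$\left|\frac{\rho^{n-1}b(n)}{b(1)}\right|\leq \frac{\rho^{n-1}}{\delta\|\boldsymbol{y}\|}\|\boldsymbol{y}\|\sqrt{\sum_{j}K_j^2\rho_j^{2(1-n)}} =\frac{1}{\delta}\sqrt{\sum_j K_j^2(\rho/\rho_j)^{2(n-1)}} \leq \frac{1}{\delta}\sqrt{\sum_j K_j^2}=:K,$$
so $h\in\mathcal{H}(K)$ (in fact $h\in\mathcal{G}(K)$). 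Proposition \ref{proposition:radius_convexity} then yields that $h(\{|w|<R_3(K)\})$ is convex, and after rescaling the image $H(\{|z|<r\})$ is convex for every $0<r\leq R_3(K)\rho$. In particular the closed curve $\Gamma_r\colon\theta\mapsto H(re^{2\pi i\theta})$, $\theta\in[0,1]$, is strictly convex.

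Convexity now drives the oscillatory estimate for $I(r,\boldsymbol{y})=\int_0^1 e^{i\varphi(\theta)}d\theta$, $\varphi(\theta)=\re H(re^{2\pi i\theta})$, exactly as in Jessen--Wintner \cite{jessenwintner} and Lee \cite{lee}. Since the tangent vector $\Gamma_r'$ rotates monotonically, the equation $\varphi'(\theta)=\re\Gamma_r'(\theta)=0$ has exactly two solutions $\theta_1,\theta_2\in[0,1)$ --- those where $\Gamma_r'$ is purely imaginary --- and the curvature formula for a convex plane curve gives $|\varphi''(\theta_j)|=\kappa(\theta_j)|\Gamma_r'(\theta_j)|^2$. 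Comparing $\Gamma_r$ with its leading circle $\theta\mapsto b(1)re^{2\pi i\theta}$, which dominates inside the radius of convexity, yields an explicit lower bound of the shape $|\varphi''(\theta_j)|\gg r|b(1)|\geq r\delta\|\boldsymbol{y}\|$, together with the analogous lower bound for $|\varphi'(\theta)|$ in terms of the distance to the nearest critical point.

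With these ingredients in place, the bound is a routine dissection: split $[0,1]$ into two short arcs of half-length $\eta$ around $\theta_1$ and $\theta_2$, on which the trivial bound contributes at most $4\eta$, and two complementary arcs on which monotonicity of $\varphi'$ plus a single integration by parts contributes $\ll (\eta r\delta\|\boldsymbol{y}\|)^{-1}$. Choosing $\eta\sim(r\delta\|\boldsymbol{y}\|)^{-1/2}$ balances the two contributions and produces the desired $O((r\delta\|\boldsymbol{y}\|)^{-1/2})$ bound. The main obstacle is to keep every numerical constant explicit throughout this dissection, in particular to quantify in absolute terms how close $\Gamma_r$ stays to its leading circle inside the disc $|z|\leq R_3(K)\rho$, so that the curvature lower bound $|\varphi''(\theta_j)|\gg r|b(1)|$ becomes a clean inequality; once that is pinned down, the constant $24$ (together with the absolute $C>0$ absorbing the universal factors) falls out of the optimisation.
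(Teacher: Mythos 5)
Your overall strategy---pass to the single series $H(z)=\sum_j\overline{y_j}G_j(z)$, prove convexity of the curve $\Gamma_r:\theta\mapsto H(re^{2\pi i\theta})$ for $r\le R_3(K)\min_j\rho_j$, then run a stationary-phase dissection---is the paper's strategy up to the point where the estimate must actually be produced, and there the proposal has a genuine gap: you never control $\varphi''=g''$. The paper's proof rests on a \emph{second} convexity statement, for the auxiliary series $h(z)=\sum_n n^2\bigl(\sum_j\overline{y_j}a_j(n)\bigr)z^n$, whose relevance is that $g''(r,\theta,\boldsymbol{y})=-4\pi^2\,\re h(re^{2\pi i\theta})$; convexity of \emph{this} curve is exactly what the class $\mathcal{H}(K)$ and the radius $R_3$ (rather than $R_2$) in Proposition \ref{proposition:radius_convexity} are for. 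It gives that $g''$ has exactly two zeros mod $1$, hence that $g'$ has exactly two zeros separating them, and it allows the dissection of $[0,1]$ into four arcs on which either $|g'|$ or $|g''|$ is bounded below by $C\delta r\|\boldsymbol{y}\|$, so that Titchmarsh's first- and second-derivative lemmas apply. Convexity of $\Gamma_r$ alone does not give the structure you invoke: writing $\Gamma_r'=v\,e^{i\psi}$, convexity controls $\psi'\ge 0$ but says nothing about $v'$, and $\varphi''=v'\cos\psi-v\psi'\sin\psi$ may change sign many times. Consequently your claims that $\varphi'$ is monotone on the two complementary arcs (needed for the integration-by-parts/total-variation bound), that $|\varphi''|\gg r\delta\|\boldsymbol{y}\|$ on neighbourhoods of the stationary points (the curvature identity gives information only \emph{at} those two points), and that $|\varphi'|$ grows linearly in the distance to the nearest stationary point, are not justified as stated.

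You do flag the missing quantitative input---the comparison of $\Gamma_r$ with the circle $b(1)re^{2\pi i\theta}$---but you defer it as bookkeeping, whereas it is the heart of the proof and is not automatic, since nothing in the convexity of $\Gamma_r$ bounds the second-derivative perturbation. It can be supplied in two ways: as in the paper, by the convexity of the curve of $h$ together with a continuity argument yielding the constant $C$ uniformly in $r$ and $\boldsymbol{y}$ (after which the bound $8/(C\delta r\|\boldsymbol{y}\|)+16/\sqrt{C\delta r\|\boldsymbol{y}\|}$ and the trivial bound give the constant $24$); or, more elementarily, by observing that the definition of $R_3$ through the $n^4$-sum leaves a margin, namely $\sum_{n\ge2}n^2|b(n)|r^{n-1}\le\tfrac14|b(1)|$ for $r\le R_3(K)\min_j\rho_j$ (because $n^4\ge 4n^2$ for $n\geq 2$), so that $g$, $g'$, $g''$ are perturbations of a cosine with explicit relative error and the whole dissection can be carried out with explicit constants. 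Either route requires an argument that the proposal asserts rather than proves; as written, the step on which the inequality \eqref{eq:bound_trasf_fourier} actually depends is missing.
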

\begin{proof} The proof is a combination of Theorems 12 and 13 of Jessen and Wintner \cite{jessenwintner} and Lemma 2.5 of Lee \cite{lee}, and we use the aforementioned results to obtain explicit constants. Consider the power series
$$f(z) = \sum_{n=1}^\infty \left(\sum_{j=1}^N \conj{y_j}a_j(n)\right)z^n\quad\hbox{and}\quad h(z) = \sum_{n=1}^\infty n^2\left(\sum_{j=1}^N \conj{y_j}a_j(n)\right)z^n.$$
Since, by hypothesis and Cauchy-Schwarz inequality, we have 
\begin{equation}\label{eq:bound_coeff}
\abs{\sum_{j=1}^N \conj{y_j}a_j(n)}\leq \frac{\norma{\boldsymbol{y}}\sqrt{\sum_j|K_j|^2}}{(\min_j \rho_j)^{n-1}}\qquad \forall n\geq 2,
\end{equation}
$f(z)$ and $h(z)$ are both holomorphic for $|z|< \min_j \rho_j$ and, by definition, we have
$$g(r,\theta,\boldsymbol{y}) = \re f(r\exp{2\pi i \theta}) \quad\hbox{and}\quad g''(r,\theta,\boldsymbol{y}) = \frac{\partial^2}{\partial \theta^2}g(r,\theta,\boldsymbol{y})= -4\pi^2 \re h(r\exp{2\pi i \theta}).$$
By Proposition \ref{proposition:yamashita} we have that $f(r\exp{2\pi i \theta})$ is a parametric representation of a convex curve if $r\leq R_2\!\left(\frac{\norma{\boldsymbol{y}}\sqrt{\sum_j|K_j|^2}}{\abs{\sum_{j=1}^N \conj{y_j}a_j(1)}}\right)\cdot\min_j\rho_j$. Indeed, substituting $w=z/\min_j \rho_j$, we have
$$\widetilde{f}(w) = \frac{f(z/\min_j \rho_j)}{(\min_j \rho_j)\left(\sum_{j=1}^N \conj{y_j}a_j(1)\right)} =w + \sum_{n=2}^\infty (\min_j \rho_j)^{n-1}\left(\frac{\sum_{j=1}^N \conj{y_j}a_j(n)}{\sum_{j=1}^J \conj{y_j}a_j(1)}\right)w^n$$
and, by \eqref{eq:bound_coeff}, $\widetilde{f}(w)\in \mathcal{G}\!\left(\frac{\norma{\boldsymbol{y}}\sqrt{\sum_j|K_j|^2}}{\abs{\sum_{j=1}^J \conj{y_j}a_j(1)}}\right)$. Analogously, by Proposition \ref{proposition:radius_convexity} we have that $h(r\exp{2\pi i \theta})$ is a parametric representation of a convex curve if $r\leq R_3\!\left(\frac{\norma{\boldsymbol{y}}\sqrt{\sum_j|K_j|^2}}{\abs{\sum_{j=1}^N \conj{y_j}a_j(1)}}\right)\cdot\min_j\rho_j$. Therefore, by Remark \ref{remark:radii}, both $f(r\exp{2\pi i \theta})$ and $h(r\exp{2\pi i \theta})$ are parametric representations of convex curves for any fixed $r\leq R_3\!\left(\frac{\norma{\boldsymbol{y}}\sqrt{\sum_j|K_j|^2}}{\abs{\sum_{j=1}^N \conj{y_j}a_j(1)}}\right)\cdot\min_j\rho_j$. This implies that both $g(r,\theta,\boldsymbol{y})$ and $g''(r,\theta,\boldsymbol{y})$ have exactly two zeros mod $1$. By the mean value theorem, we have that also $g'(r,\theta,\boldsymbol{y})$ has exactly two zeros mod $1$, which separate those of $g''(r,\theta,\boldsymbol{y})$. Note that the zeros of $g'(r,\theta,\boldsymbol{y})$ and $g''(r,\theta,\boldsymbol{y})$ depend continuously on $r$ and $\boldsymbol{y}$ since $g'(r,\theta,\boldsymbol{y})$ and $g''(r,\theta,\boldsymbol{y})$ are continuous functions in each variable.

We now consider the mid-points of the four arcs mod $1$ determined by the zeros of $g'(r,\theta,\boldsymbol{y})$ and $g''(r,\theta,\boldsymbol{y})$. These mid-points clearly depend continuously on $r$ and $\boldsymbol{y}$, and divide $[0,1]$ into four arcs, namely $I_1$, $I_2$, $I_3$ and $I_4$, such that $I_1$ and $I_3$ each contain one zero of $g'(r,\theta,\boldsymbol{y})$, while $I_2$ and $I_4$ each contain one zero of $g''(r,\theta,\boldsymbol{y})$. By van der Corput's lemma for oscillatory integrals (see Titchmarsh \cite[Lemmas 4.2 and 4.4]{titchmarsh}) we have
$$\abs{\int_{I_2\cup I_4}\exp{ig(r,\theta,\boldsymbol{y})}d\theta}\leq \frac{8}{\displaystyle\min_{I_2\cup I_4} |g'(r,\theta,\boldsymbol{y})|}$$
and
$$\abs{\int_{I_1\cup I_3}\exp{ig(r,\theta,\boldsymbol{y})}d\theta}\leq \frac{16}{\displaystyle\sqrt{\min_{I_1\cup I_3} |g''(r,\theta,\boldsymbol{y})|}}.$$
Writing
$$g(r,\theta,\boldsymbol{y}) = r\abs{\sum_{j=1}^N \conj{y_j} a_j(1)} \cos\left(2\pi (\theta-\xi)\right)+r^2O(\norma{\boldsymbol{y}})$$
for some $\xi$, we see that by continuity there exists a positive constant $C$ such that
$$\frac{g'(r,\theta,\boldsymbol{y})}{r\abs{\sum_{j=1}^N \conj{y_j} a_j(1)}}\geq C\hbox{ on $I_2$ and $I_4$, and }\frac{g''(r,\theta,\boldsymbol{y})}{r\abs{\sum_{j=1}^N \conj{y_j} a_j(1)}}\geq C\hbox{ on $I_1$ and $I_3$}$$
for every $r\leq R_3\!\left(\frac{\norma{\boldsymbol{y}}\sqrt{\sum_j|K_j|^2}}{\abs{\sum_{j=1}^N \conj{y_j}a_j(1)}}\right)\cdot\min_j\rho_j$ and $\boldsymbol{y}\in\C^N$.\\
We fix $\delta>0$, $\boldsymbol{y}\neq \bb0$ such that $\abs{\sum_{j=1}^J \conj{y_j}a_j(1)}\geq \delta\norma{\boldsymbol{y}}$, $r\leq R_3\!\left(\frac{1}{\delta}\sqrt{\sum_j|K_j|^2}\right)\cdot\min_j\rho_j$, and we obtain
$$\abs{\int_0^1 \exp{i g(r,\theta,\boldsymbol{y})}d\theta}\leq \frac{8}{C\delta r\norma{\boldsymbol{y}}}+\frac{16}{\sqrt{C\delta r\norma{\boldsymbol{y}}}}.$$
Since $\frac{1}{C\delta r\norma{\boldsymbol{y}}}\leq\frac{1}{\sqrt{C\delta r\norma{\boldsymbol{y}}}}$ when $C\delta r\norma{\boldsymbol{y}}\geq 1$, then
$$\abs{\int_0^1 \exp{i g(r,\theta,\boldsymbol{y})}d\theta}\leq \frac{24}{\sqrt{C\delta r\norma{\boldsymbol{y}}}}\quad\hbox{for }\norma{\boldsymbol{y}}\geq \frac{1}{C\delta r}.$$
On the other hand, we clearly have that $\abs{\int_0^1 \exp{i g(r,\theta,\boldsymbol{y})}d\theta}\leq 1$, hence \eqref{eq:bound_trasf_fourier} holds whenever the RHS is $\geq 1$. Therefore the result follows from the simple fact that the RHS of \eqref{eq:bound_trasf_fourier} is $>24$ when $0<\norma{\boldsymbol{y}}< \frac{1}{C\delta r}$.
\end{proof}

\begin{theorem}\label{theorem:bound_trasf_fourier_primes}
Let $N$ be a positive integer and $F_1(s),\ldots, F_N(s)$  be orthogonal functions satisfying \ref{hp:DS} and \ref{hp:EP}. Then there exists a positive constant $A$ and infinitely many primes $p$ such that 
\begin{equation}\label{eq:bound_trasf_fourier_primes}
\abs{\int_0^1 \exp{i \sum_{j=1}^N \re\left(\conj{y_j}\log F_{j,p}(\sigma+i\frac{2\pi \theta}{\log p})\right)}d\theta}\leq \frac{A}{\sqrt{\norma{\boldsymbol{y}}}}p^{\frac{\sigma}{2}}
\end{equation}
for every $\sigma\geq 1$ and every $\boldsymbol{y}=(y_1,\ldots,y_N)\in\C^N\setminus\{\boldsymbol0\}$.
\end{theorem}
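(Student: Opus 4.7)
The plan is to apply Proposition~\ref{proposition:bound_trasf_fourier} to the power series $G_j(z) = \sum_{k=1}^\infty b_{F_j}(p^k)\, z^k$, $j=1,\ldots,N$, for each prime $p$, and then use the orthogonality condition \eqref{eq:SC} to select the infinite subset of primes for which the proposition delivers the claimed shape. The starting observation is that the substitution $z=p^{-s}$ identifies $G_j(z)=\log F_{j,p}(s)$, and taking $s=\sigma+2\pi i\theta/\log p$ gives $z=p^{-\sigma}e^{-2\pi i\theta}$; the harmless change of variables $\theta\mapsto 1-\theta$ in the integral brings this into the form $z = re^{2\pi i\theta}$ with $r=p^{-\sigma}$ used by Proposition~\ref{proposition:bound_trasf_fourier}. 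Axiom~\ref{hp:EP} supplies $|b_{F_j}(p^k)|\leq Bp^{k\vartheta}$ for some $\vartheta<1/2$ and $B>0$, so choosing $\rho_j=p^{-\vartheta}$ and $K_j=Bp^{\vartheta}$ gives $|b_{F_j}(p^k)|\leq K_j\rho_j^{1-k}$ for $k\geq 2$; I would also note that $b_{F_j}(p)=a_{F_j}(p)$, so that the ``first coefficient'' entering the proposition is exactly the one appearing in \eqref{eq:SC}.

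Next, to enforce the hypothesis $|\sum_j \conj{y_j} a_j(1)|\geq \delta\|\boldsymbol{y}\|$ with a uniform $\delta$, I would exploit the orthogonality hypothesis. Writing $c(p)=\sum_j\conj{y_j}a_{F_j}(p)$, expanding the square, and using $m_{i,j}=\delta_{ij}m_{j,j}$ gives
\[
\sum_{p\leq x}\frac{|c(p)|^2}{p}=\Bigl(\sum_{j=1}^N m_{j,j}|y_j|^2+o(1)\Bigr)\log\log x\geq (c_0-o(1))\|\boldsymbol{y}\|^2\log\log x,
\]
with $c_0=\min_j m_{j,j}>0$. Since $\sum_{p\leq x}1/p\sim\log\log x$, a Markov-type comparison forces, for any fixed $\eta\in(0,c_0)$, the set of primes $p$ with $|c(p)|^2\geq\eta\|\boldsymbol{y}\|^2$ to be infinite: otherwise one would have $\sum_{p\leq x}|c(p)|^2/p\leq(\eta+o(1))\|\boldsymbol{y}\|^2\log\log x$, contradicting the lower bound. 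This produces the required uniform $\delta=\sqrt{\eta}$.

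Applying Proposition~\ref{proposition:bound_trasf_fourier} on this infinite set then yields $|I|\leq 24/\sqrt{C\sqrt{\eta}\,p^{-\sigma}\|\boldsymbol{y}\|} = A\,p^{\sigma/2}/\sqrt{\|\boldsymbol{y}\|}$ with $A=24/\sqrt{C\sqrt{\eta}}$, uniform in $\sigma$ and $\boldsymbol{y}$. What remains is to verify the radius constraint $r\leq R_3(\sqrt{\sum_j K_j^2}/\delta)\min_j\rho_j$; with our parameters this reads $p^{-\sigma}\leq R_3(O(p^{\vartheta})/\sqrt{\eta})\cdot p^{-\vartheta}$, and the leading-order asymptotics $R_3(K)\sim 1/(16K)$ as $K\to\infty$ extracted from the defining equation in Proposition~\ref{proposition:radius_convexity} reduce this to $\sigma\geq 2\vartheta+o(1)$, which holds for $\sigma\geq 1$ and $p$ sufficiently large (discarding finitely many primes). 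The main technical delicacy is tracking the joint dependence of the parameters $K_j$ and $\rho_j$ on $p$ through the shrinking radius of convexity, ensuring that the growth of $K_j$ as $p\to\infty$ is exactly absorbed by the factor $\min_j\rho_j$ and that the final constant $A$ depends only on $F_1,\ldots,F_N$ and on the choice of $\eta$, not on $\boldsymbol{y}$ or $\sigma$.
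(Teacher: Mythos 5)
Your proposal follows essentially the same route as the paper: apply Proposition \ref{proposition:bound_trasf_fourier} to the local logarithms $\log F_{j,p}$ with radius $r=p^{-\sigma}$, and use the orthogonality relation \eqref{eq:SC} (via $b_{F_j}(p)=a_{F_j}(p)$) to single out primes at which the linear form in the first coefficients is bounded below by a fixed multiple of $\|\boldsymbol{y}\|$. Your parameter bookkeeping ($\rho_j=p^{-\vartheta}$, $K_j=Bp^{\vartheta}$, compensated by the asymptotics $R_3(K)\sim 1/(16K)$ from Proposition \ref{proposition:radius_convexity}) is a cosmetic variant of the paper's choice, which instead uses $|b_{F_j}(p^k)|\le K_{F_j}p^{2(k-1)\theta_j}$ for $k\ge 2$ to take $\rho_j=p^{-2\theta_j}$ with $K_j$ independent of $p$; likewise your $\delta=\sqrt{\eta}$ with $\eta<\min_j m_{j,j}$ plays the role of the paper's normalization of $b_{F_j}(p)$ by $\sqrt{m_{j,j}}$ together with $\delta=1$. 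Both routes lead to the same admissible-radius condition $p^{-\sigma}\lesssim p^{-2\vartheta}$, valid for $\sigma\ge 1$ and $p$ large.

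The one substantive weak point is the order of quantifiers in the prime selection. Your Markov comparison is performed for a fixed $\boldsymbol{y}$, so the infinite set of primes it produces depends on $\boldsymbol{y}$; the theorem asserts a single infinite set of primes (and a single constant $A$) such that \eqref{eq:bound_trasf_fourier_primes} holds for every $\boldsymbol{y}\neq\boldsymbol{0}$, and it is invoked in exactly this $\boldsymbol{y}$-uniform form at \eqref{eq:kappa_0_theorem}--\eqref{eq:kappa_0_prod}. Passing from the per-$\boldsymbol{y}$ statement to ``applying Proposition \ref{proposition:bound_trasf_fourier} on this infinite set'' therefore proves a weaker statement than the one claimed, in which the primes may vary with $\boldsymbol{y}$. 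To be fair, the paper's own proof makes the same leap: it asserts that infinitely many primes satisfy the lower bound simultaneously for every $\boldsymbol{y}$, which does not follow from the averaged relation alone (and, for a single prime, the stated lower bound cannot hold for $\boldsymbol{y}$ orthogonal to the vector of first coefficients once $N\ge 2$), so your write-up is faithful to the paper's strategy; but if you want a complete argument you must address this uniformization (e.g.\ by a covering/compactness argument on the unit sphere in the later application), not just the per-$\boldsymbol{y}$ abundance of good primes. A related uniformity point, which you do flag but do not resolve, is that the constant $C$ of Proposition \ref{proposition:bound_trasf_fourier} a priori depends on the coefficient data, hence on $p$; one has to check it can be taken uniform over the primes used, a point on which the paper is also silent.
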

\begin{proof} We want to apply Proposition \ref{proposition:bound_trasf_fourier} to $G_j(z) = \displaystyle\sum_{n=1}^\infty \frac{b_{F_j}(p^n)}{\sqrt{m_{j,j}}}z^n$, $j=1,\ldots,N$. By \ref{hp:EP} there exist $K_{F_j}$ and $\theta_j<\frac{1}{2}$ such that for every prime $p$ and every $n\geq 2$ we have $|b_{F_j}(p^n)|\leq K_{F_j}p^{n\theta_{j}}\leq K_{F_j}p^{2(n-1)\theta_{j}}$, $j=1,\ldots,N$. Thus, for every prime $p$ we may take $K_j=K_{F_j}/\sqrt{m_{j,j}}$ and $\rho_j=p^{-2\theta_{j}}$, $j=1,\ldots,N$.\\
On the other hand, by orthogonality we have that for any $\bb{y}\neq \bb0$
$$\sum_{p\leq x}\frac{\abs{\frac{\conj{y_1}b_{F_1}(p)}{\sqrt{m_{1,1}}}+\cdots+\frac{\conj{y_N}b_{F_N}(p)}{\sqrt{m_{N,N}}}}^2}{p}\sim \norma{\boldsymbol{y}}^2\log\log x,\quad\hbox{as }x\rightarrow\infty.$$
In particular this implies that there are infinitely many primes $p$ such that
$$\abs{\frac{\conj{y_1}b_{F_1}(p)}{\sqrt{m_{1,1}}}+\cdots+\frac{\conj{y_N}b_{F_N}(p)}{\sqrt{m_{N,N}}}}\geq \norma{\boldsymbol{y}}$$
for every $\boldsymbol{y}\neq \boldsymbol{0}$. For each of such primes $p$ we take $r=p^{-\sigma}$ and $\delta=1$. Then Proposition \ref{proposition:bound_trasf_fourier} yields
\begin{equation}\label{eq:almost_there}
\abs{\int_0^1 \exp{i \sum_{j=1}^N\re\left(\frac{\conj{y_j}}{\sqrt{m_{j,j}}}\log F_{j,p}\left(\sigma + i\frac{2\pi \theta}{\log p}\right)\right)}d\theta}\leq \frac{24}{\sqrt{C \norma{\boldsymbol{y}}}}p^{\frac{\sigma}{2}}
\end{equation}
when
\begin{equation}\label{eq:radius}
p^{-\sigma}\leq R_3\!\left(2\sqrt{\sum_j\frac{|K_{F_j}|^2}{m_{j,j}}}\right)p^{-2\max_j \theta_{j}}
\end{equation}
and $\boldsymbol{y}\neq\bb0$. Note that \eqref{eq:radius} holds for every $\sigma\geq 1$ if $p$ is sufficiently large since $\max_j \theta_{j}<\frac{1}{2}$. Now, substituting $\boldsymbol{y'}=(y_1',\ldots,y_N')=\left(\frac{y_1}{\sqrt{m_{1,1}}},\ldots,\frac{y_N}{\sqrt{m_{N,N}}}\right)$ in \eqref{eq:almost_there} we obtain that there are infinitely many primes $p$ such that
$$\abs{\int_0^1 \exp{i \sum_{j=1}^N\re\left(\conj{y_j'}\log F_{j,p}\left(\sigma + i\frac{2\pi \theta}{\log p}\right)\right)}d\theta}\leq \frac{24}{\sqrt{C} \sqrt[4]{m_{1,1}|y_1'|^2+\cdots+m_{N,N}|y_N'|^2}}p^{\frac{\sigma}{2}}$$
for every $\sigma\geq1$ and every $\boldsymbol{y'}\in\C^N\setminus\{\boldsymbol0\}$. By the equivalence of norms in $\C^N$ we know that there exists a positive constants $D$ such that $\sqrt{m_{1,1}|y_1'|^2+\cdots+m_{N,N}|y_N'|^2}\geq D\norma{\boldsymbol{y'}}$, and hence the result follows immediately with $A=\frac{24}{\sqrt{DC}}$.
\end{proof}

\begin{remark}\label{remark:convexity_1_2}
From the proof we have that \eqref{eq:bound_trasf_fourier_primes} holds for $\sigma\geq1$ because $\max_j \theta_{j}<\frac{1}{2}$ by \ref{hp:EP}. Therefore if we had that $\max_j \theta_{j}<\frac{\kappa}{2}$ for some $0<\kappa<1$, we would immediately have that \eqref{eq:bound_trasf_fourier_primes} holds for every $\sigma\geq\kappa$.
\end{remark}

\section{On some distribution functions}\label{section:distribution_functions}

This section is an adaptation of Chapter II of Borchsenius and Jessen \cite{borchseniusjessen}. We will also use Theorem \ref{theorem:bound_trasf_fourier_primes} as well as Borchsenius and Jessen use Theorem 13 of Jessen and Wintner \cite{jessenwintner}. The particular distribution functions under investigation in this section may be found in Lee \cite{lee} and they will be used in Sections \ref{section:proof_main1} and \ref{section:proof_main2} for the proofs of Theorems \ref{theorem:main1} and \ref{theorem:main2}. The reader may wish to read Lee \cite{lee} for a brief introduction to the theory developed by Jessen and Tornehave \cite{jessentornehave} and Borchsenius and Jessen \cite{borchseniusjessen} and how it may be applied to linear combinations of Euler products.

Given a function $F(s)$ satisfying \ref{hp:DS} and \ref{hp:EP}, and a positive integer $n$ we write
$$F_n(s)=\prod_{m=1}^n F_{p_m}(s)\quad\hbox{and}\quad F_n(\sigma,\boldsymbol{\theta})=F_n(\sigma,\theta_1,\ldots,\theta_n)=\prod_{m=1}^n F_{p_m}\left(\sigma+i\frac{2\pi \theta_m}{\log p_m}\right),$$
where $p_m$ is the $m$-th prime and $F_p(s)$ is determined by $\log F_p(s)=\sum_{k=1}^\infty\frac{b_F(p^k)}{p^{ks}}$.

\begin{remark}\label{remark:convergence_partial_prod}
For any $n\geq 1$, $F_n(s)=\prod_{m=1}^n F_{p_m}(s)$ is well defined as a Dirichlet series (and Euler product) absolutely convergent for $\sigma>\theta=\theta_F$ by \ref{hp:EP}. 
\end{remark}

\begin{lemma}\label{lemma:uniform_convergence}
Let $F(s)$ be a function satisfying \ref{hp:DS} and \ref{hp:EP}, and $\sigma_0>1$. Then $F_n(s)$ and $\log F_n(s)$ converge uniformly for $\sigma\geq \sigma_0$ respectively toward $F(s)$ and $\log F(s)$.
\end{lemma}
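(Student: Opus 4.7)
The plan is to first establish uniform convergence of $\log F_n(s)$ to $\log F(s)$ on the closed half-plane $\sigma \geq \sigma_0$, and then transfer this to uniform convergence of $F_n(s)$ toward $F(s)$ via the exponential map.

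For the logarithmic series, axiom \ref{hp:EP} asserts that the double series $\sum_p \sum_{k=1}^\infty b_F(p^k) p^{-ks}$ converges absolutely for $\sigma>1$, so in particular the quantity
$$M := \sum_p \sum_{k=1}^\infty |b_F(p^k)| p^{-k\sigma_0}$$
is finite. For every $s$ with $\sigma \geq \sigma_0$, the tail of the series is then majorised by
$$|\log F(s) - \log F_n(s)| = \Bigl|\sum_{m>n}\sum_{k=1}^\infty \frac{b_F(p_m^k)}{p_m^{ks}}\Bigr| \leq \sum_{m>n}\sum_{k=1}^\infty \frac{|b_F(p_m^k)|}{p_m^{k\sigma_0}},$$
and the right-hand side, being the tail of a convergent series independent of $s$, tends to $0$ as $n\to\infty$. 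By the Weierstrass M-test, $\log F_n(s)$ converges uniformly to $\log F(s)$ on $\sigma \geq \sigma_0$.

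To pass from the logarithms to the functions themselves, note that by the definition of each local factor $F_{p_m}(s)$ we have $F_n(s) = \exp(\log F_n(s))$, and by \ref{hp:EP} also $F(s) = \exp(\log F(s))$. The previous step gives the uniform bounds $|\log F(s)|,\,|\log F_n(s)| \leq M$ on $\sigma \geq \sigma_0$, so in particular $|F(s)| \leq e^{M}$ there. Writing
$$F(s) - F_n(s) = F(s)\bigl(1 - \exp(\log F_n(s) - \log F(s))\bigr)$$
and invoking the elementary inequality $|1 - e^w| \leq |w|\,e^{|w|}$ with $w = \log F_n(s) - \log F(s)$, the uniform convergence of $w$ to zero together with the uniform bound on $|F(s)|$ immediately yields uniform convergence of $F_n(s)$ to $F(s)$ on $\sigma \geq \sigma_0$.

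There is no genuine obstacle: the whole argument is a routine application of the Weierstrass M-test followed by continuity of the exponential. The one conceptual point worth noting is that \ref{hp:EP} provides not merely the existence of an Euler product but the absolute convergence of the \emph{logarithmic} double series, which is what allows the partial sums over primes to be dominated uniformly in $s$ by the value at $\sigma_0$.
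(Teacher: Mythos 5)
Your proof is correct. The paper itself disposes of this lemma in one sentence, as ``a simple consequence of the absolute convergence of the Dirichlet series \ref{hp:DS} and of the Euler product \ref{hp:EP} for $\sigma>1$'', so your write-up is essentially the intended argument with the details filled in. The only (harmless) difference is that you never invoke \ref{hp:DS}: you get the convergence of $F_n(s)$ from that of $\log F_n(s)$ via the exponential and the bound $|1-e^w|\leq |w|e^{|w|}$, whereas the paper's phrasing suggests bounding $|F(s)-F_n(s)|$ directly by the tail $\sum |a_F(m)| m^{-\sigma_0}$ over integers $m$ having a prime factor exceeding $p_n$; both routes are equally routine, and yours has the small advantage of needing only axiom \ref{hp:EP}.
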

\begin{proof}
The proof is a simple consequence of the absolute convergence of the Dirichlet series \ref{hp:DS} and of the Euler product \ref{hp:EP} for $\sigma>1$.
\end{proof}

Let $N$ be a positive integer and $F_1(s),\ldots,F_N(s)$ orthogonal functions satisfying \ref{hp:DS} and \ref{hp:EP}. For $\boldsymbol\theta\in[0,1]^n$, we define
$$\boldsymbol{F}_n(\sigma,\boldsymbol\theta)= (F_{1,n}(\sigma,\boldsymbol\theta),\ldots,F_{N,n}(\sigma,\boldsymbol\theta))$$
and
$$\boldsymbol{\log F}_n(\sigma,\boldsymbol\theta)= (\log F_{1,n}(\sigma,\boldsymbol\theta),\ldots,\log F_{N,n}(\sigma,\boldsymbol\theta)).$$
To these functions we attach some distribution functions, namely for any Borel set $\boldsymbol{E}\subseteq\C^N$, $j,l\in\{1,\ldots, N\}$, $j\neq l$ and $\sigma>1$ we set
\begin{equation}\label{eq:def_lambda_j}
\bb\lambda_{\sigma,n;j}(\boldsymbol{E})=\int_{\bb{W}_{\bb{\log F}_n}(\sigma,\boldsymbol{E})}\abs{\frac{F'_{j,n}}{F_{j,n}}(\sigma,\boldsymbol\theta)}^2 d\boldsymbol\theta
\end{equation}
and
\begin{equation}\label{eq:def_lambda_j_l_tau}
\bb\lambda_{\sigma,n;j,l;\tau}(\boldsymbol{E})=\int_{\bb{W}_{\bb{\log F}_n}(\sigma,\boldsymbol{E})}\abs{\frac{F'_{j,n}}{F_{j,n}}(\sigma,\boldsymbol\theta)+\tau \frac{F'_{l,n}}{F_{l,n}}(\sigma,\boldsymbol\theta)}^2 d\boldsymbol\theta,
\end{equation}
where $\bb{W}_{\bb{\log F}_n}(\sigma,\bb{E}) = \{\boldsymbol\theta\in [0,1)^n \mid \bb{\log F}_n(\sigma,\boldsymbol\theta)\in \bb{E}\}$, and $\tau=\pm1,\pm i$.

We say that a distribution function $\mu$ on $\C^n$ is \emph{absolutely continuous} (with respect to the Lebesgue measure $meas$), if for every Borel set $E\subseteq \C^n$, $meas(E)=0$ implies $\mu(E)=0$ (cf. Bogachev \cite[Definition 3.2.1]{bogachev}). By Radon--Nikodym's Theorem (Bogachev \cite[Theorem 3.2.2]{bogachev}) this holds if and only if there exists a Lebesgue integrable function $G_\mu:\C^n\rightarrow \R_{\geq 0}$ such that
$$\mu(E)=\int_E G_\mu(\boldsymbol{x})d\boldsymbol{x}$$
for any Borel set $E\subseteq\C^n$. We call $G_\mu(\boldsymbol{x})$ the \emph{density} of $\mu$.

As a sufficient condition for absolute continuity we write here, for easier reference, the following result (cf. Borchsenius and Jessen \cite[\S6]{borchseniusjessen} and Bogachev \cite[\S3.8]{bogachev}).
\begin{lemma}\label{lemma:abs_cont}
Let $\mu$ be a distribution function on $\C^n$ and let $\widehat{\mu}$ be its Fourier transform. Suppose $\int_{\C^n}\norma{\boldsymbol{y}}^q|\widehat{\mu}(\boldsymbol{y})|d\boldsymbol{y}<\infty$ for some integer $q\geq 0$, then $\mu$ is absolutely continuous with density $G_\mu(\boldsymbol{x})\in\mathcal{C}^q(\C^n)$ determined by the Fourier inversion formula
$$G_\mu(\boldsymbol{x})=\frac{1}{(2\pi)^{2n}}\int_{\C^n} \exp{-i\langle \boldsymbol{x},\boldsymbol{y}\rangle}\widehat\mu(\boldsymbol{y})d\boldsymbol{y}.$$
\end{lemma}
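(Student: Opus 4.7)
The plan is to identify $\C^n$ with $\R^{2n}$ via real and imaginary parts and reduce the statement to the classical Fourier inversion theorem for finite Borel measures on Euclidean space. Since $q\geq 0$, the hypothesis already gives $\widehat{\mu}\in L^1(\C^n)$, which is the basic integrability requirement needed for absolute continuity. First I would check that
$$G(\boldsymbol{x})=\frac{1}{(2\pi)^{2n}}\int_{\C^n} \exp{-i\langle \boldsymbol{x},\boldsymbol{y}\rangle}\widehat\mu(\boldsymbol{y})d\boldsymbol{y}$$
defines a bounded continuous function on $\C^n$: this is immediate from dominated convergence, since the integrand is pointwise dominated by $|\widehat{\mu}(\boldsymbol{y})|\in L^1$.

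Next I would verify that $\mu$ agrees with the measure $G\,d\lambda$, where $\lambda$ is the $2n$-dimensional Lebesgue measure. A clean way is to convolve with a Gaussian approximate identity $\phi_\varepsilon$, whose Fourier transform $\widehat{\phi_\varepsilon}$ is bounded by $1$ and converges pointwise to $1$ as $\varepsilon\to 0^+$. The convolution $\mu*\phi_\varepsilon$ has smooth density with Fourier transform $\widehat{\mu}\cdot\widehat{\phi_\varepsilon}$, and the standard Fourier inversion for integrable functions yields
$$(\mu*\phi_\varepsilon)(\boldsymbol{x})=\frac{1}{(2\pi)^{2n}}\int_{\C^n}\exp{-i\langle\boldsymbol{x},\boldsymbol{y}\rangle}\widehat{\mu}(\boldsymbol{y})\widehat{\phi_\varepsilon}(\boldsymbol{y})d\boldsymbol{y}.$$
Letting $\varepsilon\to 0^+$, dominated convergence (justified by $\widehat{\mu}\in L^1$) gives pointwise convergence of the right-hand side to $G(\boldsymbol{x})$. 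On the other hand $\mu*\phi_\varepsilon\to\mu$ weakly as measures; comparing the two limits on a rich enough class of Borel sets (or test functions) forces $d\mu=G\,d\lambda$. In particular $\mu$ is absolutely continuous and $G_\mu=G$.

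Finally, for the $\mathcal{C}^q$-regularity I would differentiate $G_\mu$ under the integral sign. For any multi-index $\alpha$ of order $|\alpha|\leq q$, the formal derivative produces an integrand bounded in modulus by a constant times $\norma{\boldsymbol{y}}^{|\alpha|}|\widehat{\mu}(\boldsymbol{y})|\leq \mathrm{const}\cdot(1+\norma{\boldsymbol{y}}^q)|\widehat{\mu}(\boldsymbol{y})|$, which is integrable by hypothesis. Hence differentiation under the integral sign is legitimate, and the resulting partial derivative is continuous by a further application of dominated convergence. This yields $G_\mu\in\mathcal{C}^q(\C^n)$. No real obstacle is anticipated: the whole argument is a direct specialization of the Fourier inversion theorem and of differentiation under the integral sign, both available in the reference Bogachev \cite{bogachev} cited just above the statement; the only point requiring care is the bookkeeping for the identification $\C^n\cong\R^{2n}$ and the normalization constant $(2\pi)^{-2n}$.
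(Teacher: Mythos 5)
Your argument is correct, and in fact the paper offers no proof of this lemma at all: it is stated ``for easier reference'' with citations to Borchsenius--Jessen [\S 6] and Bogachev [\S 3.8], and your Gaussian-mollification proof of Fourier inversion plus differentiation under the integral sign is precisely the standard argument behind those references. One small point to tidy: for $q\geq 1$ the hypothesis $\int\norma{\boldsymbol{y}}^q|\widehat\mu(\boldsymbol{y})|\,d\boldsymbol{y}<\infty$ does not by itself give $\widehat\mu\in L^1$; you also need the trivial bound $|\widehat\mu(\boldsymbol{y})|\leq\mu(\C^n)$, which makes $\widehat\mu$ integrable near the origin (the same remark justifies the bound $(1+\norma{\boldsymbol{y}}^q)|\widehat\mu|$ being integrable in your regularity step).
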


On the distribution functions defined above we have the following result.

\begin{theorem}\label{theorem:distribution_functions}
Let $N$ be a positive integer and $F_1(s),\ldots,F_N(s)$ orthogonal functions satisfying \ref{hp:DS} and \ref{hp:EP}. Then there exists $n_0\geq 1$ such that the distribution functions $\bb\lambda_{\sigma,n;j}$ and $\bb\lambda_{\sigma,n;j,l;\tau}$ are absolutely continuous with continuous densities $G_{\sigma,n;j}(\bb{x})$ and $G_{\sigma,n;j,l;\tau}(\bb{x})$ for every $n\geq n_0$, $\sigma\geq 1$, $j,l\in\{1,\ldots,N\}$, $j\neq l$ and $\tau=\pm1,\pm i$; more generally for any $k\geq 0$ there exists $n_k\geq 1$ such that $G_{\sigma,n;j}(\bb{x}),\,G_{\sigma,n;j,l;\tau}(\bb{x})\in\mathcal{C}^k(\C^N)$ for every $n\geq n_k$, $\sigma\geq 1$.\\
Moreover, $\bb\lambda_{\sigma,n;j}$ and $\bb\lambda_{\sigma,n;j,l;\tau}$ converge weakly to some distribution functions $\bb\lambda_{\sigma;j}$ and $\bb\lambda_{\sigma;j,l;\tau}$ as $n\rightarrow \infty$, which are absolutely continuous with densities $G_{\sigma;j}(\bb{x}),\,G_{\sigma;j,l;\tau}(\bb{x})\in\mathcal{C}^\infty(\C^N)$ for every $\sigma\geq 1$, $j,l\in\{1,\ldots,N\}$, $j\neq l$ and $\tau=\pm1,\pm i$. The functions  $G_{\sigma,n;j}(\bb{x})$ and $G_{\sigma,n;j,l;\tau}(\bb{x})$ and their partial derivatives converge uniformly for $\bb{x}\in\C^n$ and $1\leq \sigma\leq M$ towards $G_{\sigma;j}(\bb{x})$ and $G_{\sigma;j,l;\tau}(\bb{x})$ and their partial derivatives as $n\rightarrow\infty$ for every $M>1$.
\end{theorem}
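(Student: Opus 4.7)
The plan is to follow the Fourier-analytic approach of Borchsenius and Jessen \cite[Chapter II]{borchseniusjessen}, with Theorem \ref{theorem:bound_trasf_fourier_primes} playing the role that Theorem 13 of Jessen and Wintner \cite{jessenwintner} plays in the single Euler factor case. The goal is to control the Fourier transforms of $\bb\lambda_{\sigma,n;j}$ and $\bb\lambda_{\sigma,n;j,l;\tau}$ and then appeal to Lemma \ref{lemma:abs_cont}. Set
$$\bb\phi_m(\theta)=\bb{\log F}_{p_m}\!\left(\sigma+\frac{2\pi i\theta}{\log p_m}\right)\in\C^N,\qquad h_{j,m}(\theta)=\frac{F'_{j,p_m}}{F_{j,p_m}}\!\left(\sigma+\frac{2\pi i\theta}{\log p_m}\right)\in\C,$$
so that $\bb{\log F}_n(\sigma,\bb\theta)=\sum_{m=1}^n\bb\phi_m(\theta_m)$ and the weight in \eqref{eq:def_lambda_j} is $\left|\sum_{m=1}^n h_{j,m}(\theta_m)\right|^2$.

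First I would expand the square and interchange sum with integral to get
\begin{equation*}
\widehat{\bb\lambda_{\sigma,n;j}}(\bb y)=\sum_{m_1,m_2=1}^n\int_{[0,1)^n}e^{-i\sum_{m=1}^n\langle\bb\phi_m(\theta_m),\bb y\rangle}h_{j,m_1}(\theta_{m_1})\overline{h_{j,m_2}(\theta_{m_2})}\,d\bb\theta,
\end{equation*}
the inner integral factoring as an $n$-fold product of one-dimensional integrals. For each $m\notin\{m_1,m_2\}$ whose prime lies in the infinite set $\mathcal{P}$ supplied by Theorem \ref{theorem:bound_trasf_fourier_primes}, the corresponding factor is at most $A\,p_m^{\sigma/2}/\sqrt{\norma{\bb y}}$; for the two exceptional indices $m\in\{m_1,m_2\}$ and for any $p_m\notin\mathcal{P}$ I would use the trivial bound $|h_{j,m}(\theta)|\ll \log p_m\cdot p_m^{\theta_j-\sigma}$ coming from \ref{hp:EP}. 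Given $k\ge 0$ and $M>1$, choosing $n_k$ large enough that at least $4N+2k+5$ of the first $n_k$ primes lie in $\mathcal{P}$ yields an estimate of the form
\begin{equation*}
|\widehat{\bb\lambda_{\sigma,n;j}}(\bb y)|\le\frac{B_{n,M}}{\norma{\bb y}^{N+k+1}}\qquad(\norma{\bb y}\ge 1)
\end{equation*}
uniformly for $\sigma\in[1,M]$ and $n\ge n_k$; combined with the elementary bound $|\widehat{\bb\lambda_{\sigma,n;j}}|\le\bb\lambda_{\sigma,n;j}(\C^N)$ on a neighbourhood of the origin, this gives $\int_{\C^N}\norma{\bb y}^k|\widehat{\bb\lambda_{\sigma,n;j}}(\bb y)|d\bb y<\infty$, so Lemma \ref{lemma:abs_cont} delivers absolute continuity with density $G_{\sigma,n;j}\in\mathcal{C}^k(\C^N)$.

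Weak convergence of $\bb\lambda_{\sigma,n;j}$ to some $\bb\lambda_{\sigma;j}$ would then follow from pointwise convergence of the Fourier transforms: Lemma \ref{lemma:uniform_convergence} allows passage to the limit inside the exponential, while the total masses $\bb\lambda_{\sigma,n;j}(\C^N)=\sum_{m=1}^n\int_0^1|h_{j,m}|^2d\theta$ are Cauchy, being dominated by $\sum_m(\log p_m)^2\,p_m^{2\theta_j-2}<\infty$ thanks to $\theta_j<\tfrac12$. Since the decay estimate above is uniform in $n\ge n_k$, dominated convergence applied to the Fourier inversion formula gives uniform convergence of $G_{\sigma,n;j}$ and of all its partial derivatives up to order $k$ to those of $G_{\sigma;j}$, uniformly for $\sigma\in[1,M]$; as $k$ is arbitrary, $G_{\sigma;j}\in\mathcal{C}^\infty(\C^N)$. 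The same reasoning applies verbatim to $\bb\lambda_{\sigma,n;j,l;\tau}$, whose weight $|h_{j,m}(\theta)+\tau h_{l,m}(\theta)|^2$ expands in exactly the same way.

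The hard part will be the careful uniform bookkeeping of the constants $B_{n,M}$ as $n\to\infty$ and as $\sigma$ varies over $[1,M]$: the trivial bounds on the two exceptional factors $m_1,m_2$ carry no oscillatory cancellation, so one must verify that their contribution does not grow with $n$ fast enough to destroy the $L^1$-dominance required at the dominated convergence step. This is precisely where the uniformity in $\sigma\ge 1$ built into Theorem \ref{theorem:bound_trasf_fourier_primes} (cf. Remark \ref{remark:convexity_1_2}) becomes indispensable.
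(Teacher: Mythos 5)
Your proposal follows essentially the same route as the paper's proof: expanding the weight $\abs{F'_{j,n}/F_{j,n}}^2$ so that $\widehat{\bb\lambda_{\sigma,n;j}}$ factors into one-dimensional integrals (the paper's $K_{0,j},K_{1,j},K_{2,j}$ in \eqref{eq:kappas}--\eqref{eq:lambda_expanded}), applying Theorem \ref{theorem:bound_trasf_fourier_primes} to a fixed finite number of ``good'' primes with trivial bounds elsewhere, and concluding via Lemma \ref{lemma:abs_cont}, a Cauchy-type estimate for the transforms, L\'evy's convergence theorem and dominated convergence, exactly as in Section \ref{section:distribution_functions}. The only correction needed is quantitative: a decay of order $\norma{\bb{y}}^{-(N+k+1)}$ does not make $\norma{\bb{y}}^k\abs{\widehat{\bb\lambda_{\sigma,n;j}}(\bb{y})}$ integrable over $\C^N\simeq\R^{2N}$ (one needs an exponent exceeding $2N+k$, and the cross terms $K_{1,j}$ cost an extra factor $\norma{\bb{y}}^2$), but your own device of demanding $4N+2k+5$ primes of $\mathcal{P}$ among the first $n_k$ (plus two more, to allow for the excluded indices $m_1,m_2$) already delivers such an exponent, so this is bookkeeping rather than a gap --- indeed the paper's stated target $\norma{\bb{y}}^{-5/2-k}$ is itself sufficient only for small $N$ and is fixed in the same way by letting $q$ depend on $N$.
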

\begin{proof}
The proof is an adaptation of Theorem 5 of Borchsenius and Jessen \cite{borchseniusjessen} (see also Lee \cite[pp. 1827--1830]{lee}). We prove it just for $\bb\lambda_{\sigma,n;j}$ since the proof for the others is completely similar.

We compute the Fourier transform of the distribution functions $\bb\lambda_{\sigma,n;j}$ and we get
\begin{equation}\label{eq:hat_lambda_j}
\widehat{\bb\lambda_{\sigma,n; j}}(\bb{y}) = \int_{[0,1]^n}\exp{i\sum_{h=1}^N \re(\log F_{h,n}(\sigma,\boldsymbol\theta)\conj{y_h})}\abs{\frac{F'_{j,n}}{F_{j,n}}(\sigma,\boldsymbol\theta)}^2 d\boldsymbol\theta,
\end{equation}
for any $\bb{y}=(y_1,\ldots,y_N)\in\C^N$. By Lemma \ref{lemma:abs_cont}, to prove the first part it is sufficient to show that for every $k\geq 0$ there exists $n_k$ such that, for any $M>1$, $\norma{\bb{y}}^k\widehat{\bb\lambda_{\sigma,n;j}}(\bb{y})$ is Lebesgue integrable for every $n\geq n_k$ and $1\leq\sigma\leq M$. We recall that by \ref{hp:EP} there exist $K_{F_j}$ and $\theta_{F_j}<\frac12$ such that $|b_{F_j}(p^n)|\leq K_{F_j}p^{n\theta_{F_j}}$ for every prime $p$ and $k\geq 1$, $j=1,\ldots,N$. Then we have
\begin{equation}\label{eq:bound_trasf_fourier_compact}
\abs{\widehat{\bb\lambda_{\sigma,n;j}}(\bb{y})}\leq \sup_{\sigma>1}\abs{\frac{F'_{j,n}}{F_{j,n}}(\sigma,\boldsymbol\theta)}^2\leq \sum_{m=1}^n \log^2 p_m\sum_{k=1}^\infty \frac{|b_{F_j}(p_m^{k})|^2}{p_m^{2k\sigma}}\leq K_{F_j}^2 \sum_{p}\frac{\log^2p}{p^{2(\sigma-\theta_{F_j})}}<\infty
\end{equation}
for every $n\geq 1$ and $\sigma\geq 1$. Hence it is sufficient to show that there exist constants $C_k>0$ and $n_k\geq 1$ such that for any $M>1$ we have
$$\abs{\widehat{\bb\lambda_{\sigma,n;j}}(\bb{y})}\leq C_k \norma{\bb{y}}^{-5/2-k}\quad\hbox{as }\norma{\bb{y}}\rightarrow \infty$$
for every $n\geq n_k$ and $1\leq \sigma\leq M$. To prove this, note that we can write (cf. Borchsenius and Jessen \cite[(47)]{borchseniusjessen} and Lee \cite[(3.24)]{lee})
\begin{equation}\label{eq:lambda_expanded}
\widehat{\bb\lambda_{\sigma,n;j}}(\bb{y})=\sum_{m=1}^n K_{2,j}(p_m,\bb{y}) \prod_{\substack{\ell=1\\ \ell\neq m}}^n K_{0,j}(p_\ell,\bb{y})+\sum_{\substack{m,k=1\\m\neq k}}^n K_{1,j}(p_m,\bb{y})\conj{K_{1,j}(p_k,-\bb{y})}\prod_{\substack{\ell =1\\ \ell\neq m,k}}^n K_{0,j}(p_\ell,\bb{y}),
\end{equation}
where, for any prime $p$ and $j\in\{1,\ldots,N\}$, we take
\begin{spliteq}\label{eq:kappas}
K_{0,j}(p,\bb{y}) &= \int_0^1 \exp{i\sum_{h=1}^N\re\left(\log F_{h,p}\left(\sigma+i\frac{2\pi\theta}{\log p}\right)\conj{y_h}\right)}d\theta,\\
 K_{1,j}(p,\bb{y}) &= \int_0^1 \exp{i\sum_{h=1}^N\re\left(\log F_{h,p}\left(\sigma+i\frac{2\pi\theta}{\log p}\right)\conj{y_h}\right)}\frac{F'_{j,p}}{F_{j,p}}\left(\sigma+i\frac{2\pi\theta}{\log p}\right)d\theta,\\
\hbox{and } K_{2,j}(p,\bb{y}) &= \int_0^1 \exp{i\sum_{h=1}^N\re\left(\log F_{h,p}\left(\sigma+i\frac{2\pi\theta}{\log p}\right)\conj{y_h}\right)}\abs{\frac{F'_{j,p}}{F_{j,p}}\left(\sigma+i\frac{2\pi\theta}{\log p}\right)}^2 d\theta.
\end{spliteq}
Hence, we just need to estimate the functions defined in \eqref{eq:kappas}.

For all primes $p$ and $j\in\{1,\ldots,N\}$ we clearly have
\begin{equation}\label{eq:kappa_0_trivial}
|K_{0,j}(p,\bb{y})|\leq 1.
\end{equation}
On the other hand, by Theorem \ref{theorem:bound_trasf_fourier_primes} there are a positive constant $A$ and infinitely many primes $p$ such that 
\begin{equation}\label{eq:kappa_0_theorem}
\abs{K_{0,j}(p,\bb{y})}\leq \frac{A}{\sqrt{\norma{\bb{y}}}}p^{\frac{\sigma}{2}}
\end{equation}
for every $\sigma\geq 1$, $\bb{y}\neq \bb0$ and $j\in\{1,\ldots,N\}$. Thus, putting together \eqref{eq:kappa_0_trivial} and \eqref{eq:kappa_0_theorem} we obtain that for any fixed integer $q\geq 1$ there exists $m_q$ such that
\begin{equation}\label{eq:kappa_0_prod}
\prod_{\substack{\ell =1\\ \ell\neq m,k}}^n \abs{K_{0,j}(p_\ell,\bb{y})}\leq \left[\frac{A}{\sqrt{\norma{\bb{y}}}}p_{m_q}^{\frac{\sigma}{2}}\right]^q
\end{equation}
for every $m,k\leq n$, $n\geq m_q$, $\sigma\geq 1$, $\bb{y}\neq \bb0$ and $j\in\{1,\ldots,N\}$.\\
Since we shall need it later, we also note that from the fact that $|\exp{it}-1-it|\leq \frac{t^2}{2}$ and by \ref{hp:EP}, for every prime $p$ we get (cf. Borchsenius and Jessen \cite[(50)]{borchseniusjessen} and Lee \cite[p. 1830]{lee})
\begin{equation}\label{eq:kappa_0_1}
|K_{0,j}(p,\bb{y})-1| \leq \frac{\norma{\bb{y}}^2}{2}\left(\sum_{h=1}^N K_{F_j}^2\right)\frac{1}{p^{2(\sigma-\max_h\theta_{F_h})}}.
\end{equation}

For $K_{1,j}(p,\bb{y})$, using the fact that $|\exp{it}-1|\leq |t|$ and \ref{hp:EP}, we obtain for any $\sigma\geq1$ and any prime $p$ (cf. Borchsenius and Jessen \cite[(52)]{borchseniusjessen} and Lee \cite[(3.27)]{lee})
\begin{equation}\label{eq:kappa_1}
|K_{1,j}(p,\bb{y})| \leq \norma{\bb{y}} K_{F_j}\sqrt{\sum_{h=1}^N K_{F_h}^2}\frac{\log p}{p^{2(\sigma-\max_h\theta_{F_h})}}.
\end{equation}

Finally, for any prime $p$, $\sigma\geq1$ and $j\in\{1,\ldots,N\}$, we simply have (cf. Borchsenius and Jessen \cite[(53)]{borchseniusjessen} and Lee \cite[(3.26)]{lee})
\begin{equation}\label{eq:kappa_2}
|K_{2,j}(p,\bb{y})| \leq \int_0^1 \abs{\frac{F'_{j,p}}{F_{j,p}}\left(\sigma+i\frac{2\pi\theta}{\log p}\right)}^2 d\theta \stackrel{\hbox{\ref{hp:EP}}}{\leq} K_{F_j}^2\frac{\log^2 p}{p^{2(\sigma-\theta_{F_j})}}.
\end{equation}

Putting together \eqref{eq:kappa_0_prod}, \eqref{eq:kappa_1} and \eqref{eq:kappa_2} into \eqref{eq:lambda_expanded}, for any fixed $M>1$, $j\in\{1,\ldots,N\}$ and $q\geq 0$ we get 
\begin{spliteq*}
\abs{\widehat{\lambda_{\sigma,n;j}}(\bb{y})}\leq & K_{F_j}^2 A^q \norma{\bb{y}}^{-\frac{q}{2}}p_{m_q}^{\frac{q\sigma}{2}}\sum_{m=1}^n \frac{\log^2 p_m}{p_m^{2(\sigma-\theta_{F_j})}}\\
&\quad+K_{F_j}^2\left(\sum_{h=1}^N K_{F_h}^2\right) A^q \norma{\bb{y}}^{2-\frac{q}{2}}p_{m_q}^{\frac{q\sigma}{2}}\left(\sum_{m=1}^n \frac{\log p_m}{p_m^{2(\sigma-\max_h\theta_{F_h})}}\right)^2
\end{spliteq*}
for any $n\geq m_q$, $\sigma\geq 1$ and $\bb{y}\neq \bb0$. Choosing $q=9+2k$, $n_k = m_{9+2k}$ and 
$$C_k=\left(\sum_{h=1}^N K_{F_h}^2\right) A^{9+2k} p_{n_k}^{(9+2k)M/2}\left(1+\left(\sum_{h=1}^N K_{F_h}^2\right)^2\sum_{p} \frac{\log p}{p^{2(\sigma-\max_h\theta_{F_h})}}\right)\sum_p \frac{\log p}{p^{2(\sigma-\max_h\theta_{F_h})}}$$
we have 
\begin{equation}\label{eq:dominated}
\abs{\widehat{\bb\lambda_{\sigma,n;j}}(\bb{y})}\leq C_k \norma{\bb{y}}^{-\frac{5}{2}-k}\quad\hbox{when }\norma{\bb{y}}\geq 1,
\end{equation}
for every $n\geq n_k=m_{9+2k}$, $1\leq\sigma\leq M$ and $j\in\{1,\ldots,N\}$. Hence the distribution functions $\bb\lambda_{\sigma,n;j}$, $j=1,\ldots,N$, are absolutely continuous with continuous density for every $n\geq n_0$ and every $1\leq\sigma\leq M$, while $G_{\sigma,n;j}(\bb{x})\in\mathcal{C}^k(\C^N)$ for every $n\geq n_k$ and every $1\leq\sigma\leq M$. Moreover, since $n_k$ doesn't depend on $M$ and since $M$ is arbitrary, it follows that $\bb\lambda_{\sigma,n;j}$, $j=1,\ldots,N$, are absolutely continuous with continuous density for every $n\geq n_0$ and every $\sigma\geq 1$, while $G_{\sigma,n;j}(\bb{x})\in\mathcal{C}^k(\C^N)$ for every $j\in\{1,\ldots,N\}$, $n\geq n_k$ and $\sigma\geq 1$.

On the other hand, by \eqref{eq:bound_trasf_fourier_compact}, \eqref{eq:lambda_expanded}, \eqref{eq:kappa_0_trivial}, \eqref{eq:kappa_0_1}, \eqref{eq:kappa_1}, and \eqref{eq:kappa_2}, we have (cf. Borchsenius and Jessen \cite[(60)]{borchseniusjessen} and Lee \cite[p. 1830]{lee})
\begin{equation*}
\abs{\widehat{\bb\lambda_{\sigma,n+1;j}}(\bb{y})-\widehat{\bb\lambda_{\sigma,n;j}}(\bb{y})}\ll \norma{\bb{y}}^{2}\frac{\log p_{n+1}}{p_{n+1}^{2(\sigma-\max_h\theta_{F_h})}}
\end{equation*}
for every $n\geq 1$, $\sigma\geq1$ and $j\in\{1,\ldots,N\}$. By the triangle inequality we thus get
\begin{equation}\label{eq:cauchy}
\abs{\widehat{\bb\lambda_{\sigma,n+k;j}}(\bb{y})-\widehat{\bb\lambda_{\sigma,n;j}}(\bb{y})}\ll \norma{\bb{y}}^{2}\sum_{m=n+1}^{n+k}\frac{\log p_{m}}{p_{m}^{2(\sigma-\max_h\theta_{F_h})}}\leq \norma{\bb{y}}^{2}\sum_{m=n+1}^{\infty}\frac{\log p_{m}}{p_{m}^{2(\sigma-\max_h\theta_{F_h})}} 
\end{equation}
for every $n,\,k\geq 1$ and $\sigma\geq1$. Hence, by Cauchy's convergence criterion, there exist the limit functions
$$\widehat{\bb\lambda_{\sigma;j}}(\bb{y}) = \lim_{n\rightarrow \infty} \widehat{\bb\lambda_{\sigma,n;j}}(\bb{y}),\quad j=1,\ldots,N,$$
and by \eqref{eq:cauchy} it is clear that the convergence is uniform in $\norma{\bb{y}}\leq a$, for every $a>0$. Therefore, by L\'evy's convergence theorem (cf. Bogachev \cite[Theorem 8.8.1]{bogachev}), we have that $\widehat{\bb\lambda_{\sigma;j}}(\bb{y})$ is the Fourier transform of some distribution function $\bb\lambda_{\sigma;j}$ and $\bb\lambda_{\sigma,n;j}\rightarrow \bb\lambda_{\sigma;j}$ weakly as $n\rightarrow \infty$, for $j=1,\ldots,N$. Moreover by \eqref{eq:dominated} we have that we may apply the dominated convergence theorem and thus $\bb\lambda_{\sigma;j}$ are absolutely continuous for every $\sigma\geq1$ and $j\in\{1,\ldots, N\}$, with density $G_{\sigma;j}(\bb{x})\in \mathcal{C}^\infty(\C)$ (for the arbitrariness of $M$ and $k$). Moreover, since $G_{\sigma,n;j}(\bb{x})$ and $G_{\sigma;j}(\bb{x})$ are determined by the inverse Fourier transform (see Lemma \ref{lemma:abs_cont}), the dominated convergence theorem yields that $G_{\sigma,n;j}(\bb{x})$ and their partial derivatives converge uniformly for $\bb{x}\in\C^n$ and $1\leq \sigma\leq M$ toward $G_{\sigma;j}(\bb{x})$ and their partial derivatives for every $j\in\{1,\ldots,N\}$.
\end{proof}

\begin{theorem}\label{theorem:majorant_densities}
For any $\alpha>0$ and $q\geq 0$ the densities $G_{\sigma;j}(\bb{x})$ and $G_{\sigma,n;j}(\bb{x})$, $n\geq n_q$, together with their partial derivatives of order $\leq q$, have a majorant of the form $K_q \exp{-\alpha\norma{\bb{x}}^2}$ for every $\sigma\geq 1$, $j,l\in\{1,\ldots,N\}$, $j\neq l$ and $\tau=\pm1,\pm i$.
\end{theorem}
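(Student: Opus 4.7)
The plan is to observe that the distribution functions $\bb\lambda_{\sigma,n;j}$ and $\bb\lambda_{\sigma;j}$ (and the analogous ones indexed by $j,l;\tau$) have uniformly bounded support, and then to show that their densities, together with all partial derivatives of order $\leq q$, are uniformly bounded on this support. Once these two facts are established, the Gaussian majorant is immediate: if the density $G$ vanishes outside a ball $B_R=\{\norma{\bb{x}}\leq R\}$ and satisfies $|G|\leq M$ on $B_R$, then for any $\alpha>0$ one has $|G(\bb{x})|\leq M\,\mathbf{1}_{B_R}(\bb{x})\leq M\exp(\alpha R^2)\exp(-\alpha\norma{\bb{x}}^2)$, giving the required bound with $K_q = M\exp(\alpha R^2)$.

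For the support, I would use \ref{hp:EP}: for every $h\in\{1,\ldots,N\}$,
\begin{equation*}
|\log F_{h,n}(\sigma,\boldsymbol\theta)|\leq \sum_{m=1}^n\sum_{k=1}^\infty \frac{|b_{F_h}(p_m^k)|}{p_m^{k\sigma}}\leq \sum_p\sum_{k=1}^\infty\frac{|b_{F_h}(p^k)|}{p^k}=:M_h<\infty
\end{equation*}
uniformly for $n\geq 1$, $\sigma\geq 1$ and $\boldsymbol\theta\in[0,1]^n$. Hence $\bb{\log F}_n(\sigma,\boldsymbol\theta)$ lies in the fixed compact set $B_R\subset\C^N$ with $R=(\sum_h M_h^2)^{1/2}$, so the measures $\bb\lambda_{\sigma,n;j}$ and $\bb\lambda_{\sigma,n;j,l;\tau}$ are supported in $B_R$. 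Since $\bb\lambda_{\sigma,n;j}\to\bb\lambda_{\sigma;j}$ weakly (and similarly for the other family) by Theorem \ref{theorem:distribution_functions}, and since $B_R$ is closed, the limit measures are also supported in $B_R$. In particular the densities $G_{\sigma,n;j}$, $G_{\sigma;j}$ (and the $j,l;\tau$ versions) vanish off $B_R$, for every $\sigma\geq 1$.

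For the uniform bound on the densities and their derivatives, I would apply Fourier inversion (Lemma \ref{lemma:abs_cont}): for any multi-index $\bb{m}$ with $|\bb{m}|\leq q$,
\begin{equation*}
\partial^{\bb{m}}G_{\sigma,n;j}(\bb{x})=\frac{1}{(2\pi)^{2N}}\int_{\C^N}\exp(-i\langle\bb{x},\bb{y}\rangle)\,P_{\bb{m}}(\bb{y})\,\widehat{\bb\lambda_{\sigma,n;j}}(\bb{y})\,d\bb{y},
\end{equation*}
where $P_{\bb{m}}$ is a polynomial of degree $|\bb{m}|$. Estimating in absolute value and invoking the rapid-decay bound \eqref{eq:dominated} established inside the proof of Theorem \ref{theorem:distribution_functions} with $k$ chosen large relative to $q$ (so that $|P_{\bb{m}}(\bb{y})\widehat{\bb\lambda_{\sigma,n;j}}(\bb{y})|\ll\norma{\bb{y}}^{-5/2-k+q}$ for $\norma{\bb{y}}\geq 1$), together with the trivial bound \eqref{eq:bound_trasf_fourier_compact} on the ball $\norma{\bb{y}}\leq 1$, yields $\sup_{\bb{x}}|\partial^{\bb{m}}G_{\sigma,n;j}(\bb{x})|\leq M_q$ uniformly for $n\geq n_q$ and $\sigma$ in any compact subinterval of $[1,\infty)$. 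The corresponding bound for $\partial^{\bb{m}}G_{\sigma;j}$ follows from the uniform convergence of derivatives asserted in Theorem \ref{theorem:distribution_functions}, and the argument for the $j,l;\tau$ families is entirely analogous.

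The main (minor) obstacle is the uniformity in $\sigma$: \eqref{eq:dominated} carries constants $C_k$ that a priori depend on the upper bound $M'$ of $\sigma$, so the majorant constant $K_q$ may likewise depend on the chosen compact $\sigma$-range. Combining the two steps above on any such range produces the desired majorant of the form $K_q\exp(-\alpha\norma{\bb{x}}^2)$, valid simultaneously for $G_{\sigma;j}$, $G_{\sigma,n;j}$ (with $n\geq n_q$), and their $j,l;\tau$ analogues, and for all partial derivatives of order $\leq q$.
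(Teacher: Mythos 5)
There is a genuine gap at the very first step. The finiteness of $M_h=\sum_p\sum_{k\geq1}|b_{F_h}(p^k)|p^{-k}$ does not follow from \ref{hp:EP}: absolute convergence of $\log F_h$ is only assumed for $\sigma>1$, and the bound $|b_{F_h}(p^k)|\ll p^{k\theta}$ with $\theta<\tfrac12$ only gives $\sum_k|b_{F_h}(p^k)|p^{-k}\ll p^{\theta-1}$, whose sum over primes diverges. In the motivating examples (e.g.\ $F_h=\zeta$ or a Dirichlet $L$-function, where $b_{F_h}(p)=\chi(p)$) one has $\sum_p|b_{F_h}(p)|/p=\infty$, so at $\sigma=1$ the support of $\bb\lambda_{1,n;j}$ is \emph{not} uniformly bounded in $n$: choosing $\boldsymbol\theta$ to align the phases of the prime terms makes $|\log F_{h,n}(1,\boldsymbol\theta)|$ as large as $\sum_{p\leq p_n}|b_{F_h}(p)|/p+O(1)$, which tends to infinity with $n$. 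Consequently the limit measure $\bb\lambda_{1;j}$ is in general supported on all of $\C^N$ (this is the Bohr--Jessen phenomenon), and your reduction ``compact support plus bounded density implies Gaussian majorant'' collapses exactly in the range of $\sigma$ near $1$ where the theorem is needed; restricting to $\sigma\geq\sigma_0>1$ would rescue the support argument but proves a weaker statement than the one asserted (and used).

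The second half of your argument is fine as far as it goes, but it only yields $\sup_{\bb{x}}|\partial^{\bb{m}}G_{\sigma,n;j}(\bb{x})|<\infty$ (with constants depending on a compact $\sigma$-range); without compact support this gives no decay in $\bb{x}$, and decay in $\bb{x}$ is the actual content of the theorem. The paper's proof is an adaptation of Theorems 6 and 9 of Borchsenius and Jessen: roughly, one exploits the convolution structure of $\bb\lambda_{\sigma,n;j}$, using the Fourier-transform estimates (as in \eqref{eq:dominated}) to bound the density coming from the first $n_q$ Euler factors, and controls the contribution of the remaining factors through exponentially weighted integrals, i.e.\ bounds for integrals of $\exp{\langle\bb{x},\bb{u}\rangle}$ against the tail distributions (equivalently, the Fourier transform at complex arguments), which hold uniformly for $\sigma\geq1$ and $n\geq n_q$ and produce the majorant $K_q\exp{-\alpha\norma{\bb{x}}^2}$. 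Some estimate of this exponential-moment type is unavoidable; the decay bound \eqref{eq:dominated} for the real Fourier transform alone cannot detect how the density decays at infinity, so to repair your proof you would need to add this ingredient rather than the compact-support claim.
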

\begin{proof} Straightforward adaptation of Theorems 6 and 9 of Borchsenius and Jessen \cite{borchseniusjessen}.
\end{proof}

\begin{theorem}\label{theorem:continuity_sigma}
The distribution functions $\bb\lambda_{\sigma;j}$, $\bb\lambda_{\sigma;j,l;\tau}$, $\bb\lambda_{\sigma,n;j}$ and $\bb\lambda_{\sigma,n;j,l;\tau}$, for $n\geq n_0$, depend continuously on $\sigma$, and their densities $G_{\sigma;j}(\bb{x})$, $G_{\sigma;j,l;\tau}(\bb{x})$, $G_{\sigma,n;j}(\bb{x})$ and $G_{\sigma,n;j,l;\tau}(\bb{x})$, together with their partial derivatives of order $\leq q$ if $n\geq n_q$, are continuous in $\sigma$ for every $\sigma\geq1$, $j,l\in\{1,\ldots,N\}$, $j\neq l$ and $\tau=\pm1,\pm i$.
\end{theorem}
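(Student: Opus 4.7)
The plan is to establish continuity in $\sigma$ at the level of Fourier transforms first, and then transfer it to the distribution functions and their densities by Lévy's theorem and Fourier inversion, respectively, exploiting that all the key estimates established in the proof of Theorem \ref{theorem:distribution_functions} are uniform in $\sigma$ on compact sub-intervals of $[1,\infty)$. As in that proof, I would carry out the argument explicitly only for $\bb\lambda_{\sigma,n;j}$ and $\bb\lambda_{\sigma;j}$; the four remaining cases are completely analogous.

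First I would show that, for each fixed $n$ and $\bb{y}\in\C^N$, the Fourier transform $\widehat{\bb\lambda_{\sigma,n;j}}(\bb{y})$ defined by \eqref{eq:hat_lambda_j} is continuous in $\sigma\geq 1$. The integrand is continuous in $(\sigma,\boldsymbol\theta)$, and the estimate \eqref{eq:bound_trasf_fourier_compact} provides, for any $M>1$, a $\boldsymbol\theta$-integrable bound that is uniform for $1\leq\sigma\leq M$. Dominated convergence then gives continuity in $\sigma$. By Lévy's continuity theorem this immediately yields that $\bb\lambda_{\sigma,n;j}$ depends weakly continuously on $\sigma$ for every $n\geq n_0$.

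Next I would pass to the limit in $n$. The key point is that the Cauchy estimate \eqref{eq:cauchy} is uniform in $\sigma\geq 1$ (and in fact on $\norma{\bb{y}}\leq a$ for any $a>0$), so the convergence $\widehat{\bb\lambda_{\sigma,n;j}}(\bb{y})\to \widehat{\bb\lambda_{\sigma;j}}(\bb{y})$ is uniform in $\sigma$ on any compact subinterval $[1,M]$. Since the approximating functions are continuous in $\sigma$ by the previous step, the limit $\widehat{\bb\lambda_{\sigma;j}}(\bb{y})$ is continuous in $\sigma\geq 1$ for every $\bb{y}$, and applying Lévy's theorem once more yields the weak continuity of $\sigma\mapsto \bb\lambda_{\sigma;j}$.

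Finally, for the densities and their partial derivatives of order $\leq q$, I would rely on the Fourier inversion formula of Lemma \ref{lemma:abs_cont}, which represents $G_{\sigma,n;j}(\bb{x})$ and $G_{\sigma;j}(\bb{x})$ (as well as their partial derivatives, obtained by differentiating under the integral sign via extra factors of $\bb{y}$) as integrals of the form
$$\frac{1}{(2\pi)^{2N}}\int_{\C^N}P(\bb{x},\bb{y})\widehat{\bb\lambda_{\sigma,n;j}}(\bb{y})\,d\bb{y},$$
with $P(\bb{x},\bb{y})$ a continuous polynomial-type factor. The uniform bound \eqref{eq:dominated} for $n\geq n_k$, $1\leq\sigma\leq M$, together with the continuity of $\widehat{\bb\lambda_{\sigma,n;j}}(\bb{y})$ in $\sigma$, allows dominated convergence to be applied, delivering continuity of $G_{\sigma,n;j}(\bb{x})$ and its partial derivatives of order $\leq q$ in $\sigma\geq 1$. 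The corresponding statement for $G_{\sigma;j}(\bb{x})$ follows by combining this with the uniform convergence in $\sigma\in[1,M]$ of the densities and their derivatives already established in Theorem \ref{theorem:distribution_functions}. The only subtle point is to make sure that none of the uniformity is lost when $M\to\infty$, but since $n_k$ and the bound \eqref{eq:dominated} are independent of $M$ (the dependence on $M$ appears only through the constant $C_k$, which is locally bounded), the continuity in $\sigma$ propagates to all of $[1,\infty)$.
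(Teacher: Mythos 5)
Your proposal is correct and follows essentially the same route as the paper, which (following Theorem 9 of Borchsenius and Jessen) deduces the statement from the uniform bound \eqref{eq:dominated}, the Cauchy estimate \eqref{eq:cauchy} and the Fourier inversion formula; you simply spell out the dominated-convergence and L\'evy-theorem steps that the paper leaves implicit.
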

\begin{proof} As in Theorem 9 of Borchsenius and Jessen \cite{borchseniusjessen} e result follows from \eqref{eq:dominated}, \eqref{eq:cauchy} and the Fourier inversion formula.
\end{proof}

\begin{remark}\label{remark:distribution_functions_1_2}
As for Remark \ref{remark:convexity_1_2}, note that Theorems \ref{theorem:distribution_functions}, \ref{theorem:majorant_densities} and \ref{theorem:continuity_sigma} hold for $\sigma>1$ because $\max_j \theta_{F_j}<\frac{1}{2}$ by \ref{hp:EP}. Therefore if we had that $\max_j \theta_{F_j}<\frac{\kappa}{2}$ for some $0<\kappa<1$ we would immediately have that \eqref{eq:bound_trasf_fourier_primes} holds for every $\sigma>\kappa$.
\end{remark}

\section{Zeros of sums of two Euler products}\label{section:proof_main1}

Let $F_1(s)$ and $F_2(s)$ be functions satisfying \ref{hp:DS} and \ref{hp:EP}, and $c_1,c_2\in\C\setminus\{0\}$. We then set
$$L(s)=c_1F_1(s)+c_2F_2(s).$$
To study the distribution of the zeros of $L(s)$ for $\sigma>1$, we note that, since $F_1(s)F_2(s)\neq 0$ for $\sigma>1$, then
$$L(s)=0 \quad \Leftrightarrow \quad \log\left(\frac{F_1(s)}{F_2(s)}\right)=\log\left(-\frac{c_2}{c_1}\right).$$
This idea was used by Gonek \cite{gonek}, and later by Bombieri and Mueller \cite{bombierimueller} and Bombieri and Ghosh \cite{bombierighosh}. Moreover, if $F_1(s)$ and $F_2(s)$ are orthogonal, then it is easy to show that $\frac{F_1}{F_2}(s)$ satisfies \ref{hp:DS}, \ref{hp:EP} and, if we write $\frac{F_1}{F_2}(s)=\sum_{n=1}^{\infty} a(n)n^{-s}$,
\begin{equation}\label{eq:normality}
\sum_{p\leq x} \frac{|a(p)|^2}{p} = (\kappa+o(1))\log\log x,\quad x\rightarrow \infty,
\end{equation}
for some constant $\kappa>0$.
Therefore Theorem \ref{theorem:main1} follows immediately from the following more general result on the value distribution of the logarithm of an Euler product.

\begin{theorem}\label{theorem:density_2_EP}
Let $F(s)$ be a function satisfying \ref{hp:DS}, \ref{hp:EP} and \eqref{eq:normality}, and $c\in\C$. Then the Dirichlet series $\log F(s)-c$ has no isolated vertical lines containing zeros in the half-plane $\sigma>1$.
\end{theorem}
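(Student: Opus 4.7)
The plan is to adapt the value-distribution framework of Jessen--Tornehave \cite{jessentornehave} and Borchsenius--Jessen \cite{borchseniusjessen} to show that the Jensen function
$$\varphi(\sigma) := \lim_{T\to\infty}\frac{1}{T}\int_0^T \log|\log F(\sigma+it)-c|\,dt$$
exists and is continuously differentiable on $\sigma > 1$. By the form of Theorem 31 of \cite{jessentornehave} used in deriving Theorem \ref{theorem:real_parts}, isolated vertical lines of zeros of $L(s) = \log F(s) - c$ at a point $\sigma_0$ correspond precisely to positive jumps of $\varphi'$ at $\sigma_0$; hence the regularity $\varphi \in \mathcal{C}^1(1,\infty)$ will immediately preclude any such lines.

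First, I would establish the existence of $\varphi(\sigma)$ via Kronecker--Weyl equidistribution on the infinite torus $\prod_p (\R/\Z)$, together with the uniform convergence $\log F_n \to \log F$ on compact subsets of $\{\sigma > 1\}$ (Lemma \ref{lemma:uniform_convergence}). This produces
$$\varphi(\sigma) = \int_\C \log|w-c|\, d\mu_\sigma(w),$$
where $\mu_\sigma$ is the weak limit of the unweighted pushforward measures $\mu_{\sigma,n}(E) := \mathrm{meas}\{\bb\theta \in [0,1]^n : \log F_n(\sigma,\bb\theta) \in E\}$, i.e.\ the unweighted analogue of $\bb\lambda_{\sigma,n;1}$ from Section \ref{section:distribution_functions}. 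An essentially identical (and simpler, as the $|F'/F|^2$ weight is dropped) Fourier-transform argument to the one in the proof of Theorem \ref{theorem:distribution_functions} shows that $\mu_\sigma$ is absolutely continuous with density $g_\sigma \in \mathcal{C}^\infty(\C)$, and that $g_\sigma$ together with its partial derivatives is continuous in $(\sigma,w)$ for $\sigma \geq 1$, with Gaussian majorants uniform on compact $\sigma$-intervals (the analogue of Theorem \ref{theorem:majorant_densities}). The normality hypothesis \eqref{eq:normality} enters crucially through the Fourier-decay estimate of Theorem \ref{theorem:bound_trasf_fourier_primes}.

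Now I would differentiate under the integral to get
$$\varphi'(\sigma) = \int_\C \log|w-c|\, \frac{\partial g_\sigma}{\partial \sigma}(w)\, dw.$$
The local logarithmic singularity at $w=c$ is integrable, and the Gaussian majorant on $\partial_\sigma g_\sigma$ justifies dominated convergence, yielding continuity of $\varphi'(\sigma)$ on $\sigma > 1$, so $\varphi \in \mathcal{C}^1(1,\infty)$.

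The main obstacle will be making precise the link between a jump of $\varphi'$ and an isolated vertical line --- i.e.\ the adaptation of Jessen--Tornehave's Theorem 31 to the function $L(s) = \log F(s) - c$. This requires a careful application of Littlewood's lemma on rectangular contours $[\sigma_1,\sigma_2]\times[0,T]$, controlling the horizontal boundary integrals (which are $O(1)$ away from zeros and can be handled by choosing $\sigma_1,\sigma_2$ off the exceptional set), and relating the size of $\varphi'(\sigma_0^+) - \varphi'(\sigma_0^-)$ to the positive linear density of zeros on the line $\sigma = \sigma_0$. Once this link is established, the $\mathcal{C}^1$-regularity of $\varphi$ on $(1,\infty)$ completes the proof.
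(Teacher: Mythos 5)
Your route is genuinely different from the paper's, so it is worth saying what each buys. The paper never differentiates a density with respect to $\sigma$: it works with the distribution of $\log F_n$ \emph{weighted} by $|F_n'/F_n|^2$ (the measures $\bb\lambda_{\sigma,n;1}$ of Section \ref{section:distribution_functions}) precisely because, by Borchsenius--Jessen, the density of that weighted measure at $c$ equals $\frac{1}{2\pi}\varphi''_{\log F_n-c}(\sigma)$; passing to the limit gives $\varphi_{\log F-c}\in\mathcal{C}^2$ with $\varphi''_{\log F-c}(\sigma)=2\pi G_\sigma(c)$, and the endgame combines Theorem 31 of \cite{jessentornehave} with an almost-periodicity argument. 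You instead use the unweighted distribution $\mu_\sigma$ and the representation $\varphi(\sigma)=\int_\C\log|w-c|\,d\mu_\sigma(w)$, aiming only at $\mathcal{C}^1$ regularity. Your endgame is sound in principle: $\mathcal{C}^1$ does suffice, because Theorem 31 of \cite{jessentornehave} can simply be cited (there is no need to redo Littlewood's lemma or worry about exceptional sets, since a convex $\mathcal{C}^1$ Jensen function has a genuine derivative everywhere), and an isolated line $\sigma=\sigma_0$ forces $\varphi'(\sigma_0+\eps)-\varphi'(\sigma_0-\eps)$ to stay bounded away from $0$ as $\eps\rightarrow0$ \emph{once one knows that the zeros on that line have positive lower density}; that last fact is a Rouch\'e/almost-period argument, the same input the paper invokes at the point where its proof says ``by almost periodicity'', and it is the real content of your ``main obstacle'', not the contour integration.

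The genuine gap is at the centre of your argument, the differentiation step. Writing $\varphi'(\sigma)=\int_\C\log|w-c|\,\partial_\sigma g_\sigma(w)\,dw$ presupposes that $g_\sigma(w)$ is differentiable in $\sigma$, with $\partial_\sigma g_\sigma$ continuous in $(\sigma,w)$ and locally dominated; none of this follows from the ``essentially identical'' Fourier argument you invoke, since the analogues of Theorems \ref{theorem:distribution_functions} and \ref{theorem:continuity_sigma} give smoothness in $w$ but only \emph{continuity} in $\sigma$, and Theorem \ref{theorem:majorant_densities} says nothing about $\sigma$-derivatives. This is exactly the work the weighted measures do for the paper, so it cannot be asserted away. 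It can be repaired along your lines: differentiate $\widehat{\mu_{\sigma,n}}(y)=\prod_{m\leq n}K_{0,1}(p_m,y)$ in $\sigma$, note that $\partial_\sigma\log F_p\left(\sigma+i\frac{2\pi\theta}{\log p}\right)$ has zero mean in $\theta$, so that, exactly as in \eqref{eq:kappa_1}, $|\partial_\sigma K_{0,1}(p,y)|\ll\norma{y}^2\log p\;p^{-2(\sigma-\theta_F)}$ --- the naive bound $\norma{y}\log p\;p^{-(\sigma-\theta_F)}$ is \emph{not} summable over $p$ for $\sigma\geq1$, so this subtraction trick is essential --- then use finitely many of the good primes of Theorem \ref{theorem:bound_trasf_fourier_primes} to get $|\partial_\sigma\widehat{\mu_{\sigma}}(y)|\ll_k\norma{y}^{-k}$ uniformly for $1\leq\sigma\leq M$, and differentiate the Fourier inversion formula. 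Incidentally, no Gaussian majorant is needed for your purpose: for $\sigma\geq\sigma_1>1$ the functions $\log F_n(\sigma,\bb\theta)$ are uniformly bounded, so all the measures $\mu_{\sigma,n}$, $\mu_\sigma$ are supported in one fixed disc, and a sup bound on $\partial_\sigma g_\sigma$ plus local integrability of $\log|w-c|$ suffices. Finally, the identity $\varphi(\sigma)=\int_\C\log|w-c|\,d\mu_\sigma(w)$ does not follow from weak convergence alone because of the logarithmic singularity at $w=c$: prove it first for the truncations via Kronecker--Weyl and then pass to the limit, using Theorem 6 of \cite{jessentornehave} on the Jensen-function side and the uniform convergence $g_{\sigma,n}\rightarrow g_\sigma$ on the measure side.
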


\begin{proof}The first part of the proof is similar to Borchsenius and Jessen's application of Theorems 5 to 9 in \cite{borchseniusjessen} to the Riemann zeta function \cite[Theorems 11 and 13]{borchseniusjessen}.

For every $n\geq 1$ consider the Dirichlet series $\log F_n(s)$, which are absolutely convergent for $\sigma>\theta_F$ by Remark \ref{remark:convergence_partial_prod}. Let $\nu_{\sigma,n}$ be, for every $\sigma>\theta_F$, the \emph{asymptotic distribution function} of $\log F_n(s)$ with respect to $\abs{\frac{F'_n}{F_n}(s)}^2$ defined for any Borel set $E\subseteq\C$ by (cf. Borchsenius and Jessen \cite[\S7]{borchseniusjessen})
$$\nu_{\sigma,n}(E)=\lim_{T_2-T_1\rightarrow \infty} \frac{1}{T_2-T_1}\int_{V_{\log F_n}(\sigma,T_1,T_2,E)}\abs{\frac{F'_n}{F_n}(s)}^2 dt,$$
where $V_{\log F_n}(\sigma,T_1,T_2,E)=\{t\in(T_1,T_2)\mid \log F_n(\sigma+it)\in E\}$. For $\sigma\geq 1$, we compute its Fourier transform and, by Kronecker-Weyl's theorem (see Karatsuba and Voronin \cite[\S A.8]{karatsubavoronin}) we get (cf. Borchsenius and Jessen \cite[p. 160]{borchseniusjessen} or Lee \cite[p. 1819]{lee})
$$\widehat{\nu_{\sigma,n}}(y) = \int_{[0,1]^n}\exp{i\re(\log F_n(\sigma,\boldsymbol\theta)\conj{y})}\abs{\frac{F'_n}{F_n}(\sigma,\boldsymbol\theta)}^2d\boldsymbol\theta \stackrel{\eqref{eq:hat_lambda_j}}{=} \widehat{\bb\lambda_{\sigma,n; 1}}(y),$$
with $N=1$. For simplicity we write $\lambda_{\sigma,n} = \bb\lambda_{\sigma,n; 1}$. By the uniqueness of the Fourier transform (cf. Bogachev \cite[Proposition 3.8.6]{bogachev}) we have that $\nu_{\sigma,n}=\lambda_{\sigma,n}$ as distribution functions for every $\sigma\geq1$ and $n\geq 1$.

By Theorem \ref{theorem:distribution_functions} we know that $\nu_{\sigma,n}=\lambda_{\sigma,n}$ is absolutely continuous for $n\geq n_0$ with density $G_{\sigma,n}(x)$ which is a continuous function of both $\sigma$ and $x$ (see Theorem \ref{theorem:continuity_sigma}). Hence for any $n\geq n_0$, $x\in\C$ and $\sigma>\theta_F$ we have that the \emph{Jensen function} $\varphi_{\log F_n-x}(\sigma)$ (see Jessen and Tornehave \cite[Theorem 5]{jessentornehave}) is twice differentiable with continuous second derivative (cf. Borchsenius and Jessen \cite[\S9]{borchseniusjessen})
\begin{equation}\label{eq:phi_2_log_F_n}
\varphi_{\log F_n-x}''(\sigma) = 2\pi G_{\sigma,n}(x).
\end{equation}
On the other hand, for any $1<\sigma_1<\sigma_2$, by the uniform convergence of $\log F_n(s)$ of Lemma \ref{lemma:uniform_convergence} and by Jessen and Tornehave \cite[Theorem 6]{jessentornehave}, we have that
\begin{equation}\label{eq:phi_log_F_n}
\varphi_{\log F_n -x}(\sigma)\rightarrow \varphi_{\log F -x}(\sigma)\qquad\hbox{ as }n\rightarrow \infty
\end{equation}
uniformly for $\sigma_1\leq\sigma\leq \sigma_2$. Moreover, by Theorem \ref{theorem:distribution_functions}, also $G_{\sigma,n}(x)$ converges uniformly for $\sigma_1\leq\sigma\leq \sigma_2$ toward $G_{\sigma}(x)$, which is continuous in both $\sigma$ and $x$. Then, by \eqref{eq:phi_2_log_F_n}, \eqref{eq:phi_log_F_n}, the convexity of $\varphi_{\log F_n -x}$ and Rudin \cite[Theorem 7.17]{rudin2} we obtain that for any $x\in\C$ the Jensen function $\varphi_{\log F-x}(\sigma)$ is twice differentiable with continuous second derivative
$$\varphi_{\log F-x}''(\sigma)=2\pi G_\sigma(x).$$

We fix an arbitrary $c\in\C$ and we note the following: suppose that $\varphi''_{\log F-c}(\sigma_0)>0$ for some $\sigma_0>1$. Then, by continuity, there exists $\eps_0>0$ such that $\varphi''_{\log F-c}(\sigma)>0$ for every $\sigma \in (\sigma_0-\eps_0,\sigma_0+\eps_0)$. Then, for any $0<\eps<\eps_0$, by Theorem 31 of Jessen and Tornehave \cite{jessentornehave} and the mean value theorem, we have
\begin{spliteq*}
\lim_{T_2-T_1\rightarrow\infty} \frac{N_{\log F-c}(\sigma_0-\eps,\sigma_0+\eps,T_1,T_2)}{T_2-T_1} &= \frac{1}{2\pi}\left(\varphi_{\log F-c}'(\sigma_0+\eps)-\varphi_{\log F-c}'(\sigma_0-\eps)\right)\\
&=\frac{\eps}{2\pi}\varphi_{\log F-c}''(\sigma_\eps)>0,
\end{spliteq*}
for some $\sigma_\eps\in(\sigma_0-\eps,\sigma_0+\eps)$, i.e. there are infinitely many zeros with real part $\sigma\in (\sigma_0-\eps,\sigma_0+\eps)$. This means, by letting $\eps\rightarrow0^+$, that $\sigma_0$ is the limit point of the real parts of some zeros of $\log F(s)-c$ (or $\sigma_0$ is itself a zero).

Now, suppose that there exists $\rho_0=\beta_0+i\gamma_0$ with $\beta_0>1$ such that $\log F(\rho_0)-c=0$. If we suppose that $\varphi''_{\log F-c}(\beta_0)>0$, then $\sigma=\beta_0$ cannot be an isolated vertical line containing zeros since $\beta_0$ is the limit point of the real parts of some zeros.\\
Suppose otherwise that $\varphi''_{\log F-c}(\widetilde\sigma)=0$, and for any $\delta>0$ consider the intervals $I_\delta^+=(\widetilde\sigma,\widetilde\sigma+\delta)$ and $I_\delta^-=(\widetilde\sigma-\delta,\widetilde\sigma)$. Note that in general, if $\varphi''_{\log F-c}(\sigma)=0$ for every $\sigma\in(\sigma_1,\sigma_2)$, for some $1<\sigma_1<\sigma_2$, then Theorem 31 of Jessen and Tornehave \cite{jessentornehave} and the mean value theorem imply that $\log F(s)-c$ has no zeros for $\sigma_1<\sigma<\sigma_2$. Therefore, in at least one between $I_\delta^+$ and $I_\delta^-$ there are infinitely many $\sigma$ such that $\varphi''_{\log F-c}(\sigma)>0$, for any $\delta>0$, by almost periodicity. Hence, letting $\delta\rightarrow0$, we see that there exists a sequence $\{\sigma_\delta\}_\delta$ such that $\varphi''_{\log F-c}(\sigma_\delta)>0$ and $\sigma_\delta\rightarrow\beta_0$. Since every $\sigma_\delta$ is the limit point of the real parts of some zeros, we conclude that also $\beta_0$ is the limit point of the real parts of some zeros.
\end{proof}
\section{\emph{c}-values of sums of at least three Euler products}\label{section:proof_main2}

We first state the following simple result which is a generalization of Lemma 2.4 of Lee \cite{lee}.
\begin{lemma}\label{lemma:bound_int_finite_prod}
Let $F(s)$ be a complex function satisfying \ref{hp:DS}, \ref{hp:EP} and \ref{hp:RC}, $\sigma_0>1/2$ and $k$ a fixed positive integer. Then there exists a positive constant $A_k(\sigma_0)$ such that
$$\int_{[0,1]^n} |F_n(\sigma,\boldsymbol{\theta})|^{2k}d\boldsymbol{\theta}\leq A_k(\sigma_0)\quad\hbox{and}\quad\int_{[0,1]^n} |F'_n(\sigma,\boldsymbol{\theta})|^{2k}d\boldsymbol{\theta}\leq A_k(\sigma_0)$$
for every $n\geq 1$ and $\sigma\geq \sigma_0$.
\end{lemma}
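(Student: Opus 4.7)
The plan is to exploit the orthogonality of the characters $\theta_m \mapsto e^{-2\pi i \theta_m N}$ on $[0,1]$ in order to reduce both integrals to sums of squared Dirichlet coefficients, which will converge uniformly in $n$ and $\sigma\geq\sigma_0$ thanks to \ref{hp:RC}.

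First I would expand $F_n(\sigma,\boldsymbol\theta)$ as a Dirichlet series. Using the identity
$$p_m^{-i\frac{2\pi\theta_m}{\log p_m}} = e^{-2\pi i\theta_m},$$
one obtains, with $S_n$ denoting the set of positive integers all of whose prime factors lie in $\{p_1,\ldots,p_n\}$,
$$F_n(\sigma,\boldsymbol\theta) = \sum_{m\in S_n}\frac{a_F(m)}{m^\sigma}\prod_{j=1}^n e^{-2\pi i\theta_j v_{p_j}(m)},$$
where $v_p(m)$ denotes the $p$-adic valuation. Raising this to the $k$-th power yields a series of the same shape with coefficients $b_k(m)=(a_F*\cdots*a_F)(m)$ ($k$-fold Dirichlet convolution). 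Squaring moduli and integrating, the orthogonality $\int_0^1 e^{-2\pi i\theta N}d\theta = \delta_{N,0}$ kills every off-diagonal pair, leaving
$$\int_{[0,1]^n}|F_n(\sigma,\boldsymbol\theta)|^{2k}\,d\boldsymbol\theta = \sum_{m\in S_n}\frac{|b_k(m)|^2}{m^{2\sigma}}.$$

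Next I would bound $|b_k(m)|\ll_{k,\varepsilon} m^\varepsilon$: this is immediate from \ref{hp:RC} together with the standard estimate $d_k(m)\ll_{k,\varepsilon}m^\varepsilon$ on the generalised divisor function, since $|b_k(m)|\leq \sum_{m_1\cdots m_k = m}|a_F(m_1)|\cdots|a_F(m_k)|$. Choosing $\varepsilon<\sigma_0-\tfrac12$, the resulting sum is dominated by $\sum_{m=1}^\infty m^{-2\sigma+2\varepsilon}<\infty$ for every $\sigma\geq\sigma_0$, providing a bound $A_k(\sigma_0)$ independent of both $n$ and $\sigma$.

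For the second integral the argument is identical, using the Dirichlet expansion
$$F_n'(\sigma,\boldsymbol\theta) = -\sum_{m\in S_n}\frac{a_F(m)\log m}{m^\sigma}\prod_{j=1}^n e^{-2\pi i\theta_j v_{p_j}(m)}.$$
Its $k$-th power is a Dirichlet series with coefficients $\widetilde b_k(m) = \sum_{m_1\cdots m_k=m}\prod_i(-a_F(m_i)\log m_i)$, and the same orthogonality argument gives
$$\int_{[0,1]^n}|F_n'(\sigma,\boldsymbol\theta)|^{2k}d\boldsymbol\theta = \sum_{m\in S_n}\frac{|\widetilde b_k(m)|^2}{m^{2\sigma}}.$$
Since $|\widetilde b_k(m)|\leq (\log m)^k \sum_{m_1\cdots m_k=m}|a_F(m_1)|\cdots|a_F(m_k)|\ll_{k,\varepsilon} m^\varepsilon$, absorbing the log factors into a slightly larger $\varepsilon$, the sum converges uniformly for $\sigma\geq\sigma_0$ by the same bound.

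There is no real obstacle here — the statement is essentially Parseval's identity for the torus $[0,1]^n$ combined with \ref{hp:RC}. The only point that deserves care is that the constant $A_k(\sigma_0)$ must not depend on $n$; this is automatic because the bound on $\sum_{m\in S_n}|b_k(m)|^2/m^{2\sigma}$ by the full sum over all $m\geq 1$ is legitimate (the terms are non-negative) and $\sigma_0>\tfrac12$ is precisely what is needed for convergence after absorbing the $m^\varepsilon$ factors.
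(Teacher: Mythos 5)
Your argument is correct, and it takes a more direct route than the paper. The paper simply invokes Lee's Lemma 2.4: one bounds the auxiliary integral $\mathcal{J}_k(z_1,\ldots,w_k)=\int_{[0,1]^n}\prod_{j=1}^k F_n(\sigma+z_j,\boldsymbol\theta)\overline{F_n(\sigma+\overline{w_j},\boldsymbol\theta)}\,d\boldsymbol\theta$ uniformly for small independent shifts (the same mean-value computation as in Lee's Lemma 2.5, with the Ramanujan bound $|a(n)|\leq 1$ replaced by $|a_F(n)|\ll_\varepsilon n^\varepsilon$ and $0<\varepsilon<(2\sigma_0-1)/4$), and then deduces the bound for $F_n'$ from Cauchy's integral formula on polydiscs. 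You instead expand the partial Euler product as a Dirichlet series over $p_1,\ldots,p_n$-smooth integers (using that the coefficients of $F_n$ are the multiplicative $a_F(m)$, which follows from axioms (I) and (II)), apply Parseval on the torus to get the exact diagonal identity $\int_{[0,1]^n}|F_n|^{2k}d\boldsymbol\theta=\sum_{m\in S_n}|b_k(m)|^2m^{-2\sigma}$, and bound it by the full series using (III) together with $d_k(m)\ll_{k,\varepsilon}m^\varepsilon$; the derivative is handled by differentiating the expansion and absorbing the logarithms, with no appeal to Cauchy's formula. Your version is self-contained and arguably more elementary, at the cost of redoing the computation for each quantity one wants to bound, whereas the paper's $\mathcal{J}_k$ with independent shifts controls $F_n$ on small polydiscs around $\sigma$ and hence yields all derivative bounds at once, fitting the framework of Lee that is reused elsewhere. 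Two small points you should make explicit: the interchange of summation and integration in the Parseval step (justified by absolute convergence of the smooth-number expansion for each fixed $n$, which is where $\sigma_0>\varepsilon$ enters), and the fact that the resulting majorant $\sum_{m\geq1}m^{2\varepsilon-2\sigma_0}$ is independent of $n$ and of $\sigma\geq\sigma_0$ precisely because the discarded terms are non-negative — both are routine and you indicate them, so there is no gap.
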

\begin{proof} As in Lemma 2.4 of Lee \cite{lee} the proof follows from a bound of 
$$\mathcal{J}_k(z_1,\ldots,z_n,w_1,\ldots,w_n)=\int_{[0,1]^n} \prod_{j=1}^k F_n(\sigma+z_j,\bb\theta)\overline{F_n(\sigma+\overline{w_j},\bb\theta)}d\bb\theta$$
and Cauchy's integral formula on polydiscs. This bound may be obtained with the same computations as in Lemma 2.5 of Lee \cite{lee} by replacing the Ramanujan bound $|a(n)|\leq 1$ with the weaker Ramanujan conjecture $|a(n)|\ll_\eps n^\eps$, where we take $0<\eps<(2\sigma_0-1)/4$.
\end{proof}

We can now prove Theorem \ref{theorem:main2}.

\begin{proof}[Proof of Theorem \ref{theorem:main2}]
To handle this case we follow an idea of Lee \cite[\S3.2]{lee} and we use the distribution functions studied in Section \ref{section:distribution_functions}, similarly to what we have done in the previous section for $N=2$.

For every $n\geq 1$ we write
$$L_n(s) = \sum_{j=1}^N c_jF_{j,n}(s)\quad\hbox{and}\quad L_n(\sigma,\boldsymbol{\theta})=L_n(\sigma,\theta_1,\ldots,\theta_n)=\sum_{j=1}^N c_jF_{j,n}(s,\theta_1,\ldots,\theta_n).$$
Let $\nu_{\sigma,n}$ be the asymptotic distribution function of $L_n(s)$ with respect to $\abs{L_n'(s)}^2$ defined for any Borel set $E\subseteq\C$ by (cf. Borchsenius and Jessen \cite[\S7]{borchseniusjessen})
$$\nu_{\sigma,n}(E)=\lim_{T_2-T_1\rightarrow \infty} \frac{1}{T_2-T_1}\int_{V_{L_n}(\sigma,T_1,T_2,E)}\abs{L'_n(s)}^2 dt,$$
where $V_{L_n}(\sigma,T_1,T_2,E)=\{t\in(T_1,T_2)\mid L_n(\sigma+it)\in E\}$. As in Theorem \ref{theorem:density_2_EP}, by the Kronecker-Weyl's Theorem and the uniqueness of the Fourier transform, we have that $\nu_{\sigma,n} = \lambda_{\sigma,n}$, for any $n\geq 1$ and $\sigma\geq 1$, where $\lambda_{\sigma,n}$ is the distribution function of $L_n(s,\bb\theta)$ with respect to $\abs{L_n'(s,\bb\theta)}^2$, defined for every Borel set $E\subseteq\C$ by
$$\lambda_{\sigma,n}(E)=\int_{W_{L_n}(\sigma,E)}\abs{L'_n(\sigma,\boldsymbol\theta)}^2 d\boldsymbol\theta,$$
with $W_{L_n}(\sigma,E) = \{\boldsymbol\theta=(\theta_1,\ldots,\theta_n)\in [0,1)^n \mid L_n(\sigma,\boldsymbol\theta)\in E\}$. We want to show that there exists $\widetilde{n}\geq 1$ such that $\lambda_{\sigma,n}$, and hence $\nu_{\sigma,n}$, is absolutely continuous with continuous density, which we call $H_{\sigma,n}(x)$, for every $n\geq \widetilde{n}$ and $\sigma\geq 1$.

As in Lee \cite[p. 1830--1831]{lee}, we compute the Fourier transform of $\lambda_{\sigma,n}$ and, for $\sigma\geq 1$ and $n\geq n_0$, we get
\begin{spliteq*}
\widehat{\lambda_{\sigma,n}}(y) &= \sum_{j,l=1}^N \conj{c_j}c_l (2\pi)^N\int_{\R_{+}^N}\int_{\R^N}\exp{i\sum_{h=1}^N |c_h\conj{y}|r_h\sin(2\pi (\theta_h-\alpha_h))-2\pi i\theta_j+2\pi i\theta_l}\\
&\qquad\qquad \times r_jr_lG_{\sigma,n;j,l}(\bb{r}) \frac{dr_1}{r_1}\cdots \frac{dr_N}{r_N} d\theta_1\cdots d\theta_N,
\end{spliteq*}
where $\bb{r} = (\log r_1 +2\pi i\theta_1,\ldots,\log r_N +2\pi i\theta_N)$, $\alpha_h$ is determined by the argument of $c_h\conj{y}$, for $h=1,\ldots,N$, and
$$G_{\sigma,n;j,l}(\bb{x}) = \left\{\begin{array}{ll}G_{\sigma,n;j}(\bb{x}) & j=l\\ \dsum_{\tau=\pm 1,\pm i}\conj{\tau} G_{\sigma,n;j,l;\tau}(\bb{x}) & j\neq l\end{array}\right.$$
is defined from the densities of the distribution functions $\bb\lambda_{\sigma,n;j}$ and $\bb\lambda_{\sigma,n;j,l;\tau}$ of Section \ref{section:distribution_functions}.

For any $h\in\{1,\ldots,N\}$ and any $\eps>0$ let $A_{h,\eps} = \{\theta\in\R\mid |\theta-\alpha_h-m\pi|<\eps $ for some $m\in\Z\}$. Then we note that integrating by parts with respect to $r_h$, $h=1,\ldots,N$, and using the majorant  $K_N \exp{-\left[\sum_{h=1}^N \log^2 r_h + \theta_h^2\right]}$ of Theorem \ref{theorem:majorant_densities} for the partial derivatives up to order $N$ of the density $G_{\sigma,n;j,l}(\bb{r})$, for $n\geq n_N$ and $\sigma\geq 1$, we obtain (cf. Lee \cite[p. 1832]{lee})
\begin{spliteq}\label{eq:first_int}
\int_{\R\setminus A_{1,\eps}}\cdots\int_{\R\setminus A_{N,\eps}}\int_{\R_{+}^N}&\exp{i\re\left(\sum_{h=1}^N r_hc_h\conj{y}\exp{2\pi i \theta_h}\right)-2\pi i\theta_j+2\pi i\theta_l}r_jr_lG_{\sigma,n;j,l}(\bb{r}) \frac{dr_1}{r_1}\cdots \frac{dr_N}{r_N} d\theta_1\cdots d\theta_N \\
&\ll  \prod_{h=1}^N \int_{\R\setminus A_{h,\eps}} \frac{1}{|c_h\conj{y}|\sin(2\pi (\theta_h-\alpha_h))}\exp{-\theta_h^2}d\theta_h\\
&\ll \frac{1}{(\eps|y|)^N}
\end{spliteq}
for every $n\geq n_N$, $\sigma\geq 1$ and $y\neq 0$. Analogously, integrating by parts with respect to $\theta_h$, $h=1,\ldots,N$, using van der Corput's lemma for oscillatory integrals (see Titchmarsh \cite[Lemma 4.2]{titchmarsh}) on each interval $[\alpha_h+m_h/2-\eps,\alpha_h+m_h/2+\eps]$ with $\eps<\frac{1}{2}$, and the majorant $K_N \exp{-\left[\sum_{h=1}^N \log^2 r_h + \theta_h^2\right]}$ of Theorem \ref{theorem:majorant_densities} for the partial derivatives up to order $N$ of the density $G_{\sigma,n;j,l}(\bb{r})$, $n\geq n_N$ and $\sigma\geq 1$, we obtain  (cf. Lee \cite[p. 1832]{lee})
\begin{spliteq}\label{eq:second_int}
\int_{\R_{+}^N}\int_{A_{1,\eps}}\cdots\int_{A_{N,\eps}}&\exp{i\re\left(\sum_{h=1}^N r_hc_h\conj{y}\exp{2\pi i \theta_h}\right)-2\pi i\theta_j+2\pi i\theta_l}r_jr_lG_{\sigma,n;j,l}(\bb{r}) \frac{dr_1}{r_1}\cdots \frac{dr_N}{r_N} d\theta_1\cdots d\theta_N \\
&\ll  \prod_{h=1}^N \int_{\R_{>0}} \frac{1}{|c_h\conj{y}|}\exp{-\log^2 r_h}dr_h\\
&\ll \frac{1}{|y|^N},
\end{spliteq}
for every $n\geq n_N$, $\sigma\geq 1$, $|y|\geq \max_h\frac{1}{|c_h|}$ and $\eps>0$ sufficiently small. Fixing $\eps>0$ sufficiently small so that \eqref{eq:second_int} holds and putting together \eqref{eq:first_int} and \eqref{eq:second_int}, we obtain
\begin{equation}\label{eq:abs_cont}
|\widehat{\nu_{\sigma,n}}(y)|=|\widehat{\lambda_{\sigma,n}}(y)|\ll |y|^{-N} \ll |y|^{-3}
\end{equation}
since $N\geq 3$, for every $n\geq n_N$, $\sigma\geq 1$ and $|y|\geq \max\left( 1,\max_h |c_h|^{-1}\right)$. By Lemma \ref{lemma:abs_cont} we have thus proved that $\nu_{\sigma,n}$ is absolutely continuous for every $n\geq \widetilde{n}=n_N$ and $\sigma\geq 1$. Moreover, since $\nu_{\sigma,n}$ depends continuously on $\sigma$ (cf. Borchsenius and Jessen \cite[\S7]{borchseniusjessen}), we have that $\widehat{\nu_{\sigma,n}}$ is continuous in $\sigma$. Therefore \eqref{eq:abs_cont} and the Fourier inversion formula imply that $H_{\sigma,n}(x)$ is continuous in both $\sigma$ and $x$. Note that all implied constants in \eqref{eq:abs_cont} are independent of $n$.

Now we prove that the absolutely continuous distribution functions $\lambda_{\sigma,n}$ converge weakly as $n\rightarrow\infty$ toward the absolutely continuous distribution function $\lambda_{\sigma}$ with density $H_\sigma(x)$ which is continuous in both $\sigma$ and $x$. Moreover, we want to show that, for any $1<\sigma_1<\sigma_2$, $H_{\sigma,n}(x)$ converges uniformly for $\sigma_1\leq \sigma\leq \sigma_2$ toward $H_\sigma(x)$ as $n\rightarrow \infty$.

For this, note that
\begin{spliteq}\label{eq:L_n_1}
L_{n+1}(\sigma,\bb\theta,\theta_{n+1}) &= \sum_{j=1}^N c_j F_{j,n}(\sigma,\bb\theta)F_{j,p_{n+1}}\left(\sigma +i \frac{2\pi \theta_{n+1}}{\log p_{n+1}}\right)\\
&\stackrel{\hbox{\ref{hp:RC}}}{=} \sum_{j=1}^N c_j F_{j,n}(\sigma,\bb\theta)\left(1 + \frac{a_{F_j}(p_{n+1})}{p_{n+1}^ {\sigma}}\exp{2\pi i \theta_{n+1}} + O_\eps\left(\frac{1}{p_{n+1}^{2(\sigma-\eps)}}\right)\right)\\
&=L_n(\sigma,\bb\theta) + \frac{\exp{2\pi i \theta_{n+1}}}{p_{n+1}^ {\sigma}}\sum_{j=1}^N c_j a_{F_j}(p_{n+1}) F_{j,n}(\sigma,\bb\theta) + O_\eps\left(\frac{\sum_j |F_{j,n}|}{p_{n+1}^{2(\sigma-\eps)}}\right)
\end{spliteq}
for every $\sigma\geq 1$ and $0<\eps<\frac{1}{2}$. Similarly
\begin{spliteq*}
L_{n+1}'(\sigma,\bb\theta,\theta_{n+1}) =&L_n'(\sigma,\bb\theta) + \frac{\exp{2\pi i \theta_{n+1}}}{p_{n+1}^{\sigma}}\sum_{j=1}^N c_j a_{F_j}(p_{n+1}) \left[F_{j,n}'(\sigma,\bb\theta)-\log p_{n+1} F_{j,n}(\sigma,\bb\theta)\right]\\
&\qquad +O_\eps\left(\frac{\log p_{n+1}\sum_j |F_{j,n}|+|F_{j,n}'|}{p_{n+1}^{2(\sigma-\eps)}}\right)\\
\end{spliteq*}
for every $\sigma\geq 1$ and $0<\eps<\frac{1}{2}$. Hence we have (cf. Lee \cite[(3.20)]{lee})
\begin{spliteq}\label{eq:difference_3}
\widehat{\lambda_{\sigma,n+1}}(y)-&\widehat{\lambda_{\sigma,n}}(y)\\
&= \int_{[0,1]^{n+1}}\left[\exp{i\re\left(L_{n+1}(\sigma,\boldsymbol\theta,\theta_{n+1})\conj{y}\right)}-\exp{i\re\left(L_{n}(\sigma,\boldsymbol\theta)\conj{y}\right)}\right]\abs{L'_n(\sigma,\boldsymbol\theta)}^2 d\boldsymbol\theta d\theta_{n+1}\\
&\quad+\frac{2}{p_{n+1}^{\sigma}}\int_{[0,1]^{n+1}}\exp{i\re\left(L_{n+1}(\sigma,\boldsymbol\theta,\theta_{n+1})\conj{y}\right)}\re\Bigg(\conj{L_n'(\sigma,\bb\theta)} \exp{2\pi i \theta_{n+1}}\\
&\qquad \times\sum_{j=1}^N c_j a_{F_j}(p_{n+1}) \left(F_{j,n}'(\sigma,\bb\theta)-\log p_{n+1} F_{j,n}(\sigma,\bb\theta)\right)\Bigg)d\bb\theta d\theta_{n+1}\\
&\quad +O_\eps\left(\frac{\log p_{n+1}}{p_{n+1}^{2(\sigma-\eps)}}\int_{[0,1]^{n+1}}(1+\sum_j |F_{j,n}'|)\left(\sum_j |F_{j,n}|+|F_{j,n}'|\right)d\bb\theta d\theta_{n+1}\right)\\
&\quad +O_\eps\left(\frac{\log^2 p_{n+1}}{p_{n+1}^{4(\sigma-\eps)}}\int_{[0,1]^{n+1}}\left(\sum_j |F_{j,n}|+|F_{j,n}'|\right)^2 d\bb\theta d\theta_{n+1}\right).
\end{spliteq}
for every $\sigma\geq 1$ and $0<\eps<\frac{1}{2}$.\\
For the first term, using the fact that $|\exp{it}-1-it|\leq \frac{t^2}{2}$, we obtain (cf. Lee \cite[(3.22)]{lee})
$$\Bigg|\int_{0}^1 \left[\exp{i\re\left(L_{n+1}(\sigma,\boldsymbol\theta,\theta_{n+1})\conj{y}\right)}-\exp{i\re\left(L_{n}(\sigma,\boldsymbol\theta)\conj{y}\right)}\right]d\theta_{n+1}\Bigg| \ll_{\eps,a}\frac{\sum_j |F_{j,n}| + |F_{j,n}|^2}{p_{n+1}^{2(\sigma-\eps)}}$$
for $|y|\leq a$, $a>0$, $\sigma\geq 1$ and $0<\eps<\frac{1}{2}$. For the second term we get directly from \eqref{eq:L_n_1} and $|\exp{it}-1|\leq |t|$ that
$$\Bigg|\int_{0}^1 \exp{i\re\left(L_{n+1}(\sigma,\boldsymbol\theta,\theta_{n+1})\conj{y}\right)}\exp{\pm2\pi i\theta_{n+1}} d\theta_{n+1}\Bigg|\ll_{\eps,a}\frac{\sum_j |F_{j,n}|}{p_{n+1}^{(\sigma-\eps)}}$$
for $|y|\leq a$, $a>0$, $\sigma\geq 1$ and $0<\eps<\frac{1}{2}$. We fix $0<\eps<\frac{1}{2}$, then putting these together, by triangle inequality and Lemma \ref{lemma:bound_int_finite_prod} with $\sigma_0=1$, we get (cf. Lee \cite[p. 1826]{lee})
$$|\widehat{\lambda_{\sigma,n+1}}(y)-\widehat{\lambda_{\sigma,n}}(y)|\ll_{a,\eps} \frac{\log {p_{n+1}}}{p_{n+1}^{2(\sigma-\eps)}}$$
uniformly for $|y|\leq a$, $a>0$, and for every $\sigma \geq 1$. It follows, by triangle inequality, that for any $k>0$
\begin{equation}\label{eq:cauchy3}
\abs{\widehat{\lambda_{\sigma,n+k}}({y})-\widehat{\lambda_{\sigma,n}}({y})}\ll_{a,\eps} \sum_{m=n+1}^{n+k}\frac{\log p_{m}}{p_{m}^{2(\sigma-\eps)}}\leq \sum_{m=n+1}^{\infty}\frac{\log p_{m}}{p_{m}^{2(\sigma-\max_h\theta_{F_h})}} 
\end{equation}
for every $n,\,k\geq 1$ and $\sigma\geq 1$, uniformly for $|y|\leq a$, $a>0$. Hence, by Cauchy's convergence criterion, there exists the limit function
$$\widehat{\lambda_{\sigma}}({y}) = \lim_{n\rightarrow \infty} \widehat{\lambda_{\sigma,n}}({y})$$
and by \eqref{eq:cauchy} the convergence is uniform in $|y|\leq a$ for every $a>0$. Therefore, by L\'evy's convergence theorem, we have that $\widehat{\lambda_{\sigma}}({y})$ is the Fourier transform of some distribution function $\lambda_{\sigma}$, and $\lambda_{\sigma,n}\rightarrow \lambda_{\sigma}$ weakly as $n\rightarrow \infty$. Moreover, since the constants in \eqref{eq:abs_cont} are independent of $n$, we have that we may apply the dominated convergence theorem and thus $\lambda_{\sigma}$ is absolutely continuous for every $\sigma\geq 1$, with continuous (both in $\sigma$ and $x$) density $H_{\sigma}({x})$. Furthermore, since $H_{\sigma,n}(x)$ and $H_{\sigma}(x)$ are determined by the Fourier inversion formula (see Lemma \ref{lemma:abs_cont}), the uniform convergence of $\widehat{\lambda_{\sigma,n}}(y)\rightarrow \widehat{\lambda_{\sigma}}(y)$ and \eqref{eq:abs_cont} imply that $H_{\sigma,n}(x)$ converges, uniformly with respect to both $1\leq \sigma\leq M$, $M>1$, and $x\in\C$, toward $H_{\sigma}(x)$.

Now, similarly to Theorem \ref{theorem:density_2_EP}, for $n\geq \widetilde{n}$ and $c\in\C$ we have that the Jensen function $\varphi_{L_n-c}(\sigma)$ is twice differentiable with continuous second derivative (cf. Borchsenius and Jessen \cite[\S9]{borchseniusjessen})
\begin{equation}\label{eq:phi_2_L_n}
\varphi_{L_n-c}''(\sigma) = 2\pi H_{\sigma,n}(c).
\end{equation}
On the other hand, for any $1<\sigma_1<\sigma_2$, by the uniform convergence of $F_{j,n}(s)$, $j=1,\ldots,N$, of Lemma \ref{lemma:uniform_convergence} and by Jessen and Tornehave \cite[Theorem 6]{jessentornehave}, we have that
\begin{equation}\label{eq:phi_L_n}
\varphi_{L_n-c}(\sigma)\rightarrow \varphi_{L-c}(\sigma)\qquad\hbox{ as }n\rightarrow \infty
\end{equation}
uniformly for $\sigma_1\leq\sigma\leq \sigma_2$. By \eqref{eq:phi_2_L_n}, \eqref{eq:phi_L_n}, the convexity of $\varphi_{L_n-c}(\sigma)$ and Rudin \cite[Theorem 7.17]{rudin2} we obtain that the Jensen function $\varphi_{L}(\sigma)$ is twice differentiable with continuous second derivative
$$\varphi_{L-c}''(\sigma)=2\pi H_\sigma(c).$$
At this point, the same final argument of Theorem \ref{theorem:density_2_EP} yields the result.
\end{proof}

\section{Dirichlet series with vertical strips without zeros}\label{section:holes}

In this section we collect the proofs of Theorems \ref{theorem:one_hole}, \ref{theorem:holes} and \ref{theorem:func_eq_hole}.

\subsection{Proof of Theorem \ref{theorem:one_hole}}
We take $\bb{c}_0=(c_{0,1},\ldots,c_{0,N})\in\C^N\setminus\{\bb0\}$ such that $c_{0,1}a_1(1)+\cdots+c_{0,N}a_N(1)=0$. Since $L_{\bb{c}_0}(s)$ is not identically zero, then $\sigma^*(L_{\bb{c}_0})<+\infty$ and hence we fix 
$$\sigma_2>\sigma_1>\max(\sigma^*(L_{\bb{c}_0}),\max_{1\leq j\leq N}\sigma^*(F_j)).$$
Then, by definition of $\sigma^*(L_{\bb{c}_0})$ and Theorem 8 of Jessen and Tornehave \cite{jessentornehave}, there exists $\eps>0$ such that
$|L_{\bb{c}_0}(s)|>\eps$ for $\sigma_1\leq \sigma\leq \sigma_2$. Moreover, there exists $M>0$ such that $|F_j(s)|\leq M$ for $\sigma_1\leq \sigma\leq \sigma_2$. On the other hand, if we consider the hyperplanes $H(\sigma)=\{\bb{x}\in\C^N \mid L_{\bb{x}}(\sigma)=0\}$ we have 
$$\lim_{\sigma\rightarrow +\infty} dist(\bb{c}_0,H(\sigma))=\lim_{\sigma\rightarrow +\infty}\frac{|L_{\bb{c}_0}(\sigma)|}{\sqrt{\sum_j |F_j(\sigma)|^2}} = 0.$$
Therefore there exists $\beta>\sigma_2$ such that $dist(\bb{c}_0,H(\beta))<\frac{\eps}{4\sqrt{N}M}$.
Then for any $\bb0\neq \bb{z}\in B_{\frac{\eps}{2\sqrt{N}M}}(\bb{c}_0)\cap H(\beta)$ we have $L_{\bb{z}}(\beta)=0$ and, by the triangle and Cauchy-Schwartz inequalities,  
$$|L_{\bb{z}}(s)|\geq|L_{\bb{c}_0}(s)|-|L_{\bb{z}-\bb{c}_0}(s)|>\eps-\frac{\eps}{2}=\frac{\eps}{2}$$
for $1\leq \sigma^*(L_{\bb{c}_0})<\sigma_1\leq\sigma\leq\sigma_2<\beta\leq \sigma^*(L_{\bb{z}})$. Moreover, note that for every $0\neq r\in\C$ the Dirichlet series $L_{z\bb{c}}(s)$ has the same zeros and vertical strips without zeros of $L_{\bb{c}}(s)$.

\subsection{Proof of Theorem \ref{theorem:holes}}
We write $N=k+1\geq 2$. If $N=2$ then the result follows from Theorem \ref{theorem:one_hole}; so we suppose that $N\geq 3$.\\
Note $\bb{x}\in\C^N$ is such that $L_{\bb{x}}(\sigma)=0$ for some $\sigma>1$ if and only if $\bb{x}=(x_1,\ldots,x_N)$ belongs to the hyperplane
\begin{equation}\label{eq:hyper}
F_1(\sigma)x_1+\cdots + F_N(\sigma)x_N=0.
\end{equation}
If $\sigma>\max_{1\leq j\leq N}\:\sigma^*(F_j)=\widetilde\sigma_0$, then the space of solutions of \eqref{eq:hyper} has dimension $N-1\geq 2$ and is generated by
$$v_j^{(1)}(\sigma)=\left(-\frac{1}{F_1(\sigma)},0,\ldots,\frac{1}{F_j(\sigma)},\ldots,0\right),\qquad j=2,\ldots,N.$$
Moreover we define inductively for $h=2,\ldots,N-1$ the vectors
$$v_j^{(h)}(\sigma_1,\ldots,\sigma_h)= v_{j}^{(h-1)}(\sigma_1,\ldots,\sigma_{h-1})-\frac{L_{v_{j}^{(h-1)}(\sigma_1,\ldots,\sigma_{h-1})}(\sigma_h)}{L_{v_{h}^{(h-1)}(\sigma_1,\ldots,\sigma_{h-1})}(\sigma_h)}v_{h}^{(h-1)}(\sigma_1,\ldots,\sigma_{h-1}),$$
$j=h+1,\ldots,N$. Note that these are well defined linear combinations of $v_j^{(1)}(\sigma_1)$, $j=2,\ldots,N$, hence solutions of \eqref{eq:hyper}, if $\sigma_1>\widetilde\sigma_0$ and $\sigma_h>\sigma^*(L_{v_{h}^{(h-1)}(\sigma_1,\ldots,\sigma_{h-1})})$, $h=2,\ldots,N-1$. Actually, by definition it is clear that, under these conditions, $v_j^{(h)}(\sigma_1,\ldots,\sigma_h)$ is a solution of
\begin{equation*}
\left\{\begin{array}{l}
F_1(\sigma_1) x_1 + \cdots + F_N(\sigma_1) x_N =0\\
\qquad\qquad\qquad\vdots \\
F_1(\sigma_h) x_1 + \cdots + F_N(\sigma_h) x_N =0\\
\end{array}\right..
\end{equation*}
Moreover, for any $1\leq m\leq N-1$ we consider the vector 
\begin{equation}\label{eq:limit}
v_m(\sigma_1,\ldots,\sigma_{m-1},\infty,\ldots,\infty) = \lim_{\sigma_{m}\rightarrow\infty}\cdots\lim_{\sigma_{N-1}\rightarrow\infty} v_N^{(N-1)}(\sigma_1,\ldots,\sigma_{N-1})
\end{equation}
and for simplicity we write $v_N(\sigma_1,\ldots,\sigma_{N-1})=v_N^{(N-1)}(\sigma_1,\ldots,\sigma_{N-1}).$
Note that there exists a finite set of explicit conditions on $\sigma_1,\ldots,\sigma_{N-1}$ for which these limits exist, i.e. there exist $\widetilde\sigma_j$, $j=1,\ldots,N-1$, which depend only on the Dirichlet series $F_1,\ldots,F_N$, such that $v_m(\sigma_1,\ldots,\sigma_{m-1},\infty,\ldots,\infty)$ exists for every $1\leq m\leq N-1$ if $\sigma_l> \widetilde\sigma_l$ for every $l=1,\ldots,N-1$. These conditions actually correspond to the fact that the vector $v_m(\sigma_1,\ldots,\sigma_{m-1},\infty,\ldots,\infty)$ is a generator of the one-dimensional (by \eqref{eq:det}, reordering the functions if needed) vector space defined by the system
\begin{equation*}
\left\{\begin{array}{l}
F_1(\sigma_1) x_1 + \cdots + F_N(\sigma_1) x_N =0\\
\qquad\qquad\qquad\vdots \\
F_1(\sigma_{m-1}) x_1 + \cdots + F_N(\sigma_{m-1}) x_N =0\\
a_1(1) x_1 + \cdots + a_N(1) x_N =0\\
\qquad\qquad\qquad\vdots \\
a_1(N-m) x_1 + \cdots + a_N(N-m) x_N =0\\
\end{array}\right..
\end{equation*}
Hence, in particular, this implies that the definition of $v_m(\sigma_1,\ldots,\sigma_{m-1},\infty,\ldots,\infty)$ is independent from the order of the limits and that $L_{v_m(\sigma_1,\ldots,\sigma_{m-1},\infty,\ldots,\infty)}(\sigma_l)=0$, $l=1,\ldots,m-1$.

We work by induction on $h\in [1,N-2]$. For $h=1$ we fix
$$\sigma_{1,2}>\sigma_{1,1}>\max\left(\sigma^*(L_{v_1(\infty,\ldots,\infty)}), \widetilde\sigma_0\right),$$
and take
$$\eps_1 = \min_{\sigma_{1,1}\leq \sigma\leq \sigma_{1,2},t\in\R}\: |L_{v_1(\infty,\ldots,\infty)}(\sigma+it)|>0$$
and
$$M_1=\max_{1\leq j\leq N}\:\max_{\sigma_{1,1}\leq \sigma\leq \sigma_{1,2},t\in\R} |F_j(\sigma+it)|<\infty.$$
Note that $M_1>0$ by the choice of $\sigma_{1,1}$ and $\sigma_{1,2}$. By \eqref{eq:limit}, we can choose $\beta_1>\sigma_{1,2}$ such that 
$$\norma{v_{1}(\infty,\ldots,\infty)-v_2(\beta_1,\infty,\ldots,\infty)}<\frac{\eps_1}{2\sqrt{N}M_1}.$$
Then, since $v_2(\beta_1,\infty,\ldots,\infty)$ is a solution of \eqref{eq:hyper} with $\sigma=\beta_1$, we have that $L_{v_2(\beta_1,\infty,\ldots,\infty)}(\beta_1)=0$. Moreover for $\sigma_{1,1}\leq \sigma\leq \sigma_{1,2}$ we have, by the triangle and Cauchy-Schwarz inequalities,
$$|L_{v_2(\beta_1,\infty,\ldots,\infty)}(s)|\geq |L_{v_1(\infty,\ldots,\infty)}(s)| - |L_{v_1(\infty,\ldots,\infty)-v_2(\beta_1,\infty,\ldots,\infty)}(s)|\geq \eps_1 - \frac{\eps_1}{2}=\frac{\eps_1}{2}=\delta_1>0.$$

By induction we suppose that for any fixed $1<h\leq N-2$ there exist 
$$\sigma_{1,1}<\sigma_{1,2}<\beta_1<\cdots< \sigma_{h,1}<\sigma_{h,2}<\beta_h$$
and $\delta_h>0$ such that
$$\min_{1\leq l\leq h}\:\min_{\sigma_{l,1}<\sigma<\sigma_{l,2},t\in\R}\: |L_{v_{h+1}(\beta_1,\ldots,\beta_{h},\infty,\ldots,\infty)}(\sigma+it)|>\delta_h.$$
These hypotheses mean that the Dirichlet series $L_{v_{h+1}(\beta_1,\ldots,\beta_{h},\infty,\ldots,\infty)}(s)$, which vanishes for $s=\beta_1,\ldots,\beta_h$, has at least $h$ distinct vertical strips without zeros in the region $1<\sigma<\sigma^*(L_{v_{h+1}(\beta_1,\ldots,\beta_{h},\infty,\ldots,\infty)})$.\\
For the inductive step $h\mapsto h+1$, we take
$$\sigma_{h+1,2}>\sigma_{h+1,1}>\max\Big(\sigma^*(L_{v_{h+1}(\beta_1,\ldots,\beta_{h},\infty,\ldots,\infty)}),\max_{h+1\leq j\leq N}\:\sigma^*(L_{v_{j}^{(h)}(\beta_1,\ldots,\beta_{h})}), \widetilde\sigma_h\Big),$$
$$\eps_{h+1} = \min\left(\delta_h,\min_{\sigma_{h+1,1}\leq \sigma\leq \sigma_{h+1,2},t\in\R} \:|L_{v_{h+1}(\beta_1,\ldots,\beta_{h},\infty,\ldots,\infty)}(\sigma+it)|\right)>0$$
and
$$M_{h+1}=\max_{1\leq j\leq N}\: \max_{\sigma_{1,1}\leq \sigma\leq \sigma_{h+1,2},t\in\R} |F_j(\sigma+it)|<\infty.$$
Note that since $\sigma_{h+1,1}>\sigma_{1,2}$ we have $M_{h+1}>0$. Then we choose $\beta_{h+1}>\sigma_{h+1,2}$ such that 
$$\norma{v_{h+1}(\beta_1,\ldots,\beta_h,\infty,\ldots,\infty)-v_{h+2}(\beta_1,\ldots,\beta_h,\beta_{h+1},\infty,\ldots,\infty)}<\frac{\eps_{h+1}}{2\sqrt{N}M_{h+1}},$$
which exists by definition. Moreover, by the triangle and Cauchy-Schwarz inequalities, we have that
\begin{spliteq*}
|L_{v_{h+2}(\beta_1,\ldots,\beta_{h+1},\infty,\ldots,\infty)}(s)|&\geq |L_{v_{h+1}(\beta_1,\ldots,\beta_{h},\infty,\ldots,\infty)}(s)|\\
&\qquad\qquad-|L_{v_{h+2}(\beta_1,\ldots,\beta_{h+1},\infty,\ldots,\infty)-v_{h+2}(\beta_1,\ldots,\beta_{h+1},\infty,\ldots,\infty)}(s)|\\
&\geq \delta_h-\frac{\eps_{h+1}}{2}\geq \frac{\eps_{h+1}}{2}=\delta_{h+1}
\end{spliteq*}
for any $\sigma_{l,1}\leq \sigma\leq\sigma_{l,2}$, $l=1,\ldots,h+1$.

When $h+1=N-2+1=N-1$ we have just one vector $\bb{c}=v_N(\beta_1,\ldots,\beta_{N-1})\in\C^N\setminus\{\bb0\}$ and the corresponding Dirichlet series $L_{\bb{c}}(s)$ has, as noted above, at least $N-1$ distinct vertical strips without zeros in the region $1<\sigma<\sigma^*(L_{\bb{c}})$. Moreover, note that for every $0\neq z\in\C$ the Dirichlet series $L_{z\bb{c}}(s)$ has the same zeros of $L_{\bb{c}}(s)$ and thus at least $N-1$ distinct vertical strips without zeros in the same region.

\subsection{Proof of Theorem \ref{theorem:func_eq_hole}}
For any $j=1,\ldots,N$, let $\alpha_j$ be a square root of $\omega_j$. Without loss of generality we may suppose that $h=1$ and $k=2$. Note that, since $|\omega_j|=1$ and $\omega_1\neq \omega_2$ then $\alpha_1\neq \pm\alpha_2$ and we may suppose $\alpha_1\notin\R$. It follows that the system of equations
\begin{equation}\label{eq:system1}
\left\{\begin{array}{l}
\re(\alpha_1) x_1 + \cdots + \re(\alpha_N) x_N =0\\
\im(\alpha_1) x_1 + \cdots + \im(\alpha_N) x_N =0\\
\end{array}\right.
\end{equation}
defines a real vector space $V_\infty$ of dimension $N-2\geq 1$ which may be written as
$$V_\infty =\left\{\left(\sum_{j=3}^\infty\left(\frac{\im(\alpha_2)\im(\alpha_1\conj{\alpha_j})}{\im(\alpha_1)\im(\alpha_1\conj{\alpha_2})}-\frac{\im(\alpha_j)}{\im(\alpha_1)}\right) t_j, -\sum_{j=3}^\infty\frac{\im(\alpha_1\conj{\alpha_j})}{\im(\alpha_1\conj{\alpha_2})}t_j ,t_3,\ldots,t_{N}\right)\mid t_3,\ldots, t_{N}\in\R\right\}.$$
Let $v_\infty\in V_\infty$ be the vector corresponding to a fixed choice $(\tau_1,\ldots,\tau_N)\in\R^{N-2}\setminus\{\bb0\}$ and $\bb{c}_0=(\conj{\alpha_1}v_{\infty,1},\ldots,\conj{\alpha_N}v_{\infty,N})$. We take $\sigma_2>\sigma_1>\max(\sigma^*(L_{\bb{c}_0}))$, then, by Theorem 8 of Jessen and Tornehave \cite{jessentornehave}, there exists $\eps>0$ such that $|L_{\bb{c}_0}(s)|>\eps$ for $\sigma_1\leq \sigma\leq \sigma_2$. Moreover, there exists $M>0$ such that $|F_j(s)|\leq M$ for $\sigma_1\leq \sigma\leq \sigma_2$.\\
On the other hand, for any fixed $\sigma>\sigma_2$, the system of equations
\begin{equation}\label{eq:system2}
\left\{\begin{array}{l}
\re(\alpha_1F_1(\sigma)) x_1 + \cdots + \re(\alpha_NF_N(\sigma)) x_N =0\\
\im(\alpha_1F_1(\sigma)) x_1 + \cdots + \im(\alpha_NF_N(\sigma)) x_N =0\\
\end{array}\right.
\end{equation}
defines a real vector space $V_\sigma$ of dimension at least $N-2$. However, since $F_j(\sigma)\rightarrow a_j(1)=1$ as $\sigma\rightarrow\infty$, $j=1,2$, there exists $\sigma_0>\sigma_2$ such that $V_\sigma$ has dimension $N-2$ for every $\sigma>\sigma_0$ and
\begin{spliteq*}
V_\sigma =\Bigg\{\Big(\sum_{j=3}^\infty & \left(\frac{\im(\alpha_2F_2(\sigma))\im(\alpha_1\conj{\alpha_j}F_1(\sigma)\conj{F_j(\sigma)})}{\im(\alpha_1F_1(\sigma))\im(\alpha_1\conj{\alpha_2}F_1(\sigma)\conj{F_2(\sigma)})}-\frac{\im(\alpha_j)F_j(\sigma)}{\im(\alpha_1)F_1(\sigma)}\right) t_j,\\
&\qquad -\sum_{j=3}^\infty\frac{\im(\alpha_1\conj{\alpha_j}F_1(\sigma)\conj{F_j(\sigma)})}{\im(\alpha_1\conj{\alpha_2}F_1(\sigma)\conj{F_2(\sigma)})}t_j ,t_3,\ldots,t_{N}\Big)\mid t_3,\ldots, t_{N}\in\R\Bigg\}.
\end{spliteq*}
Let $v_\sigma\in V_\sigma$ be the vector corresponding to $(\tau_1,\ldots,\tau_N)$, then $\norma{v_\infty-v_\sigma}\rightarrow 0$ as $\sigma\rightarrow\infty$. Therefore there exists $\beta>\sigma_0$ such that, taking $\bb{c}=(\conj{\alpha_1}v_{\beta,1},\ldots,\conj{\alpha_N}v_{\beta,N})$, we have $\norma{\bb{c}_0-\bb{c}}<\frac{\eps}{2\sqrt{N}M}$. Then by \eqref{eq:system2} we have that $L_{\bb{c}}(\beta)=0$ and, by the triangle and Cauchy-Schwartz inequalities, that
$$|L_{\bb{c}}(s)|\geq|L_{\bb{c}_0}(s)|-|L_{\bb{c}-\bb{c}_0}(s)|>\eps-\frac{\eps}{2}=\frac{\eps}{2}$$
for $1\leq \sigma^*(L_{\bb{c}_0})<\sigma_1\leq\sigma\leq\sigma_2<\sigma_0<\beta\leq \sigma^*(L_{\bb{c}})$. Moreover 
$$\Phi(s) = \sum_{j=1}^N \conj{\alpha_j} v_{\beta,j} \Phi_j(s) = \sum_{j=1}^N \conj{\alpha_j} v_{\beta,j} \omega_j\conj{\Phi_j(1-\conj{s})} = \sum_{j=1}^N \alpha_j v_{\beta,j} \conj{\Phi_j(1-\conj{s})} = \conj{\Phi(1-\conj{s})}.$$

\bibliographystyle{amsplain}
\bibliography{biblio}

\end{document}